\theoremstyle{definition}
\theoremstyle{example}
\theoremstyle{remark}
\newtheorem{theorem}{Theorem}[section]
\newtheorem{proposition}{Proposition}[section]
\newtheorem{lemma}{Lemma}[section]
\newtheorem{definition}{Definition}[section]
\newtheorem{example}{Example}[section]
\newtheorem{remark}{Remark}[section]
\newtheorem{corollary}{Corollary}[section]
\begin{document}

\markboth{Uwe Kaiser}
{Homflypt skein modules, string topology and $2$-categories}

\title{HOMFLYPT SKEIN THEORY, STRING TOPOLOGY AND $2$-CATEGORIES}

\maketitle

\centerline{\author{UWE KAISER}}

\centerline{\textit{Department of Mathematics}}
\centerline{\textit{Boise State University, 1910 University Drive}}
\centerline{\textit{ Boise, ID 83725-1555, ukaiser@boisestate.edu
}}

\begin{abstract}
We show that relations in
Homflypt type skein theory of an oriented $3$-manifold $M$ are induced
from a $2$-groupoid defined from the
fundamental $2$-groupoid 
of a space of singular links in $M$.
The module relations are  
defined by homomorphisms related to string topology. They appear from a
\textit{representation} of the groupoid into free modules on a set of model
objects.
The construction on the fundamental $2$-groupoid
is defined by the singularity stratification and relates Vassiliev and skein theory.  
Several explicit properties are discussed, and some implications for 
skein modules are derived.
\end{abstract}

\centerline{\textit{Keywords:}{Skein Modules, String topology, 2-categories}}

\centerline{Mathematics Subject Classification 2010: 57M25, 57M35, 57R42}

\section{Introduction and summary of results}

The first version of this article was written in 2003, following results by the author on
homotopy skein modules \cite{K1}, \cite{K2} and two-term skein modules of framed links\cite{K3}. The techniques used in this paper are extending the ideas of those papers and modifications of the original ideas of Lin \cite{L}, Kalfagianni \cite{Ka1}, Kalfagianni-Lin \cite{KaL} and Kirk-Livingston \cite{KiL}, who construct finite type invariants of links in $3$-manifolds. They observed essential singular spheres and tori as the obstructions in the construction of finite type invariants. The intention of this article is to apply mapping space techniques in a more directl way to the calculation of skein modules. In this way, without passing to \textit{dual} modules, we will be able to describe the possible torsion in the skein modules.

\vskip 0.1in

The problem to find presentations of skein modules naturally splits into \textbf{(1)} to determine a \textit{minimal} set of generators, and \textbf{(2)} to find relations in terms of these relations. In 2001, Charlie Frohman pointed out to the author that the relations described in \cite{K1} are related to the string topology operations just introduced by Chas-Sullivan \cite{CS1}. While the ideas from string topology give a pretty good way to approach \textbf{(2)} the problem \textbf{(1)} still remains evasive: It turns out to be difficult to determine whether the obvious minimal set of generators is sufficient (It is a challenging conjecture that this is even true for hyperbolic $3$-manifolds). It has been a lack of explicit applications, which put a hold on the project described here at that time. But the author kept the hope that there would be substantial progress with problem \textbf{(1)}, thereby providing stronger impact to the methods developed in this article. 
The author's decision to revise the original manuscript for publication now is partly based on recent progress in this direction: in fact problem \textbf{(1)} for $L(p,q)$ was solved by \cite{C}, and later on by \cite{GM}, even though in both cases a careful analysis of their arguments is necessary (and will not be given in this article). The result that the Homflypt skein modules of lens spaces $L(p,q)$ (except for $S^2\times S^1$) are free on the expected set of generators for $q>1$ is still not published until now. The two main approaches to find presentations of skein modules of $L(p,q)$ with respect to problem \textbf{(2)} are using projections \cite{GM} respectively braids \cite{DLP}, \cite{DL1}, \cite{DL2}. This work is still in progress but until now completely successful only for $q=1$. A second reason to revisit the manuscript from 2003 came from increasing interest in higher category theory in low dimensional topology during recent years, motivation from categorification but also new relations to the topology of stratified spaces \cite{T}. Also note that, in contrast to our skein modules, the groundbreaking recent result of Gunningham, Jordan and Safronov \cite{GJS}, Ddefinition 2.2. and Theorem 4.8, stating that the skein modules $\textrm{Sk}_G(M)$ are finite-dimensional $\mathbb{C}[q^{1/d}]$-vector spaces for each reductive group $G$ and closed, oriented $3$-manifold $M$. 

\vskip 0.1in

In this article we will circumvent \textbf{(1)} and replace skein modules by adic completions, which in the cases where \textbf{(2)} gives only trivial relations actually turns out to be equivalent to the construction of power series invariants previously constructed from finite type invariants in \cite{L}, \cite{Ka1}, \cite{Ka2}, \cite{KaL}.

\vskip .1in

On a theoretical level we develop ideas using category theory
to study the stratified topology of mapping spaces associated with skein modules. We will see
how the structures of skein modules and string topology naturally 
combine on a categorical level. This gives
both a theoretical
understanding and new tools for the computation of skein modules of
links in oriented $3$-manifolds. 

\vskip .1in

Recall that a skein module of an oriented $3$-manifold $M$ is the
quotient of a free module with basis a set of  
links (possibly including
singular links) in $M$ by a submodule generated by linear
combinations of links defined from local tangle modifications in
oriented $3$-balls.
In quantum field theory the
skein modules of $M$, or rather their duals, appear as modules of
quantum observables.
In link theory the skein module is the target of a 
\textit{universal link invariant} satisfying  
given skein relations. Skein \textit{algebras} have been 
studied in detail for cylinders over oriented surfaces, But not much is known 
about the structure of skein modules for general $3$-manifolds. 
For an overview
see \cite{P1}.

\vskip .1in

We will construct various types of link invariants 
satisfying skein relations. Our construction is algebraic
and follows from the structure of a $2$-groupoid defined
in section 3 using the general ideas from section 2.

\vskip .1in 

Throughout let $M$ be a compact connected oriented $3$-manifold.
For $k\geq 0$ let $\mathcal{L}[k]$ be the set of 
$k$-links in $M$.
The elements of $\mathcal{L}[k]$ are the isotopy class of
$k$-embeddings, i.\ e.\ immersions of circles in $M$ with singularity 
given by precisely $k$ double-points without tangencies.
Thus $\mathcal{L}[0]$ is the usual set of oriented links in $M$
(including the empty link).
For $K_*\in \mathcal{L}[1]$ let $K_{\pm}, K_0$ be the \textit{Conway resolutions}.
Similarly let $\mathcal{L}_{\mathfrak{f}}[k]$ denote the set of isotopy classes of framed 
oriented links with precisely $k$ double-points without tangencies. 
If a fixed $3$-manifold is clear from the context we often just use notation $\mathcal{L}$.

\setlength{\unitlength}{0.6cm}
\begin{picture}(5,5)
\put(-1,0){\vector(1,1){3}}
\put(2,0){\line(-1,1){1.4}}
\put(0.4,1.6){\vector(-1,1){1.4}}
\put(0,-1){$K_+$}
\put(5,0){\line(1,1){1.4}}
\put(6.6,1.6){\vector(1,1){1.4}}
\put(8,0){\vector(-1,1){3}}
\put(6,-1){$K_-$}
\put(11,0){\vector(1,1){3}}
\put(14,0){\vector(-1,1){3}}
\put(12.5,1.5){\circle*{0.2}}
\put(12,-1){$K_*$}
\qbezier(17,0)(18.5,1.5)(17,3)
\qbezier(20,0)(18.5,1.5)(20,3)
\put(17.1,2.9){\vector(-1,1){0.1}}
\put(19.9,2.9){\vector(1,1){0.1}}
\put(18,-1){$K_0$}
\end{picture}

\vskip 1.0cm

Let $\mathcal{R}$ be a commutative ring with $1$ and let $\mathcal{R}^{\times }$ be the corresponding set 
of units.
Let $q,v\in \mathcal{R}^{\times }$ (which may very well be $1$). Let $I\subset J$ be proper ideals of $\mathcal{R}$.
For each $\mathcal{R}$-module $W$ let $W[[I]]$ denote the 
$I$-adic completion of $W$, i.\ e.\ the inverse limit of the
system $W/I^iW$ for $i\geq 0$.   

A map $\sigma : \mathcal{L}[1]\rightarrow \mathcal{R}\mathcal{L}[0]$
is called a \textit{skein potential} (with respect to $I$) 
if the image of a $j$-component 
link is a linear combination of links with $<j$ components
and coefficients in $J\setminus I$,
and links with $\geq j$ components and
coefficients in $I$. 

We want to study skein relations
of oriented links in $M$ of the form
$q^{-1}K_+-qK_-=\sigma (K_*)$,
and skein relations for framed oriented
links: $q^{-1}K_+-qK_-=\sigma (K_*)$, $K^{(+)}=qv^{-1}K$,
where $K^{(+)}$ is the result of introducing a positive twist into a
component of $K$.
  
\vskip .1in

In Chern Simons theory the 
\textit{classical} observables 
are the homotopy
classes of oriented links in $M$ (holonomy along loops).
Thus it is natural to consider skein modules 
as \textit{deformations} of free objects generated by 
homotopy classes of links in $M$.
This is our starting point.

\vskip .1in

Let $\mathfrak{b}[0]$
denote the set of homotopy classes of oriented links in $M$. 
We will choose \textit{geometric models}, also called 
\textit{standard links}, i.\ e.\ a map $\mathfrak{b}[0]\rightarrow
\mathcal{L}[0]$ assigning to each homotopy class of link  
\textit{some} oriented link with the
given homotopy class. 
Let
$$\partial: \mathcal{L}[k]\rightarrow \mathcal{L}[k-1],$$
defined by $\partial (K_*)=q^{-2}K_+-K_-$.
($*$ denotes the last double-point),  
be the \textit{Jones boundary}. For $q=1$ it is called the
\textit{Conway boundary}. 
Let
$$\mathcal{S}(\sigma ):=\mathcal{R}\mathcal{L}[0]/im(\partial
-q^{-1}\sigma)$$
be the \textit{skein module} defined by $\sigma $. 

In section 2 we will define the notion of \textit{insensitive} 
skein potential.
The usual oriented or framed oriented \textit{local} skein relations 
are defined by insensitive
skein potentials. 

The idea of the skein module is a \textit{finite formal expansion} of links 
using the skein relation repeatedly. We will study expansions of links
in terms of standard links by possibly infinite applications of the skein relation.
This is formalized in the following result.

\begin{theorem} Let $\sigma $ be a skein
potential and $\mathfrak{s}$ be a 
choice of geometric models.
Then there is  
defined a submodule 
$$U=U(\sigma, \mathfrak{s})\subset \mathcal{R}(\mathfrak{b}[0])[[I]],$$ 
snd the $I$-adic link invariant
$$\rho =\rho(\sigma, \mathfrak{s}): 
\mathcal{L}[0]\rightarrow \mathcal{R}(\mathfrak{b}[0])[[I]]/U.$$
The submodule $U$ is generated by \textup{(i)} expansion of 
differentiability relations (see section \textup{4}), and \textup{(ii)} 
expansion of elements in the image of a transversal 
string topology homomorphism (defined by $\sigma $). 
This homomorphism is
defined on $1$-dimensional homology
groups of the space of immersions in $M$. 
If $\sigma $ is insensitive then $U$ is determined only by \textup{(ii)}.
The map $\rho$ induces the module isomorphism
$$S(\mathfrak{\sigma })[[I]]\cong \mathcal{R}\mathfrak{b}[[I]]/U.$$
The composition
$$\rho (\sigma, \mathfrak{s})\circ \mathfrak{s}:
\mathfrak{b}[0]\rightarrow \mathcal{R}(\mathfrak{b}[0])[[I]]/U$$
is the composition of the natural map
$\mathfrak{b}[0]\rightarrow \mathcal{R}(\mathfrak{b}[0])[[I]]$ 
with the natural
projection.
\end{theorem}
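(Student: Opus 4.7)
The plan is to define $\rho(K)$ by iteratively resolving $K$ into its chosen standard model $\mathfrak{s}([K])$ using the skein relation, and to identify $U$ as the indeterminacy coming from the choices made. Fix $K\in\mathcal{L}[0]$ and choose a generic path $\gamma$ in the space of immersions from $K$ to $\mathfrak{s}([K])$. Because the homotopy classes agree, such a $\gamma$ exists, and by transversality it meets the codimension--$1$ stratum of $1$--singular immersions in finitely many instants $t_1,\dots,t_N$, at each of which two of the resolutions of the appearing $K_*\in\mathcal{L}[1]$ are identified up to isotopy with the links of $\gamma$ just before and after $t_i$. Writing the skein relation in the Jones form $\partial K_*=q^{-1}\sigma(K_*)$, each crossing contributes a correction in $\sigma(K_*)$, which by hypothesis lies in a sum of links with fewer components (coefficients in $J\setminus I$) and links with more components (coefficients in $I$). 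Iterating this resolution on every new link appearing in $\sigma(K_*)$ produces a formal series in $\mathcal{R}(\mathfrak{b}[0])[[I]]$; convergence is automatic because each application of $\sigma$ strictly increases the $I$--adic order of the part with $\geq j$ components, so the partial sums are Cauchy in the $I$--adic topology.

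Next, I would define $U$ to record the ambiguities. Two sources enter: (i) the \emph{differentiability relations}, which compare the expansions obtained from two different generic paths $\gamma_0,\gamma_1$ between the same endpoints whose relative $2$--parameter family is transverse to the codimension--$2$ stratum of triple-point/tangency singularities; these yield the ``expansion of the differentiability relations'' of Section~4, corresponding to crossing the lower strata of the stratification of the immersion space. (ii) The \emph{transversal string topology homomorphism}, which records the ambiguity when the path passes through the same double-point geometry in an inequivalent way; this is precisely the class in $H_1$ of the immersion space that the skein potential $\sigma$ sees via its prescribed deformation of $K_*$. The homomorphism is then evaluated on the generators of $H_1$ and expanded into $\mathcal{R}(\mathfrak{b}[0])[[I]]$ by the same iterative resolution procedure. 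Verifying that these two families of relations generate exactly the image of the indeterminacy of $\rho$ is the step that uses the $2$--groupoid structure built in Section~3: paths are $1$--cells, homotopies of paths are $2$--cells, and the $2$--dimensional relations descend to the relations generating $U$.

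With $U$ and $\rho$ in place, the isomorphism $\mathcal{S}(\sigma)[[I]]\cong\mathcal{R}\mathfrak{b}[[I]]/U$ is established by producing a map in each direction: one is induced by $\rho$, since the skein relation $\partial-q^{-1}\sigma$ is killed by construction; the other sends $\mathfrak{s}([L])\in\mathfrak{b}[0]$ to the class of $\mathfrak{s}([L])$ in $\mathcal{S}(\sigma)$, extended $I$--adically continuously. The compositions are the identity essentially because the iterative expansion is a convergent rewriting process modulo the skein submodule. The triangular identity for $\rho(\sigma,\mathfrak{s})\circ\mathfrak{s}$ is then immediate: if $K=\mathfrak{s}(\mathbf{b})$, one may take the constant path, so no crossings are encountered and $\rho(K)$ is just the class of $\mathbf{b}$. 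Finally, in the insensitive case, the Section~2 analysis of insensitive skein potentials shows that the differentiability relations are automatically consequences of string topology relations, eliminating contribution (i).

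The main obstacle is proving that $U$ as described really absorbs all ambiguity, i.e.\ that any two generic paths in the immersion space produce expansions differing only by elements in $U$. This requires a careful transversality argument in the stratified space of immersions and the identification, via the $2$--groupoid, of the relevant codimension--$2$ strata with the generators of the string topology homomorphism and the differentiability relations; this is where the categorical apparatus of Sections~2--3 does the real work.
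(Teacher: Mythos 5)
Your plan is essentially the paper's own argument, unpacked geometrically: the paper proves Theorem 1.1 by applying the general expansion theorems of Section 2 (Theorems 2.1--2.4, with the linearized, $I$-adic version Theorem 2.4 doing the order-by-order work) to the $2$-groupoids $\mathcal{C}[0]$, $\mathcal{B}[0]$ of Section 3 with models $\mathfrak{b}[0]$, and then uses the exact sequence of Theorem 4.1,
$H_1(\mathrm{imm})\to \mathbb{Z}\mathcal{L}[1]/\mathcal{D}\to \mathbb{Z}\mathcal{L}\to \mathbb{Z}\mathfrak{b}\to 0$,
to identify the indeterminacy: two transversal paths from $K$ to $\mathfrak{s}([K])$ differ by a loop, and the signed sum of its crossings, taken modulo the differentiability relations $\mathcal{D}$ coming from the codimension-two strata, lies exactly in the image of the string topology homomorphism $\mu$. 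That is your decomposition (i)+(ii), and your ``main obstacle'' is precisely what Lemma 3.1, Proposition 4.1 and Theorem 4.1 supply, so the skeleton matches.

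Three points need tightening. First, your convergence sentence is too quick: the terms with fewer components carry coefficients in $J\setminus I$, not in $I$, so they do \emph{not} raise $I$-adic order; convergence needs the double induction in which component count (bounded below) strictly drops on those terms while $I$-adic order rises on the others — this is the model-complexity bookkeeping with the bound $\ell$ and the function $\phi(i,\ell,k)$ in the proof of Theorem 2.4. Second, your explanation of the insensitive case is wrong in mechanism (though right in conclusion): the differentiability contributions are not ``consequences of string topology relations''; rather, for an insensitive potential their expansions can be chosen to vanish identically (conditions (i)--(ii) of the insensitivity definition, Theorem 2.3, and Proposition 5.1 for local skein potentials), which is why $U$ is generated by (ii) alone. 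Third, for $q\neq 1$ the expansion $K_+=q^2K_-+q\,\sigma(K_*)$ accumulates powers of $q^{\pm 2}$ along the path, so the loop indeterminacy is not measured by the untwisted $H_1(\mathrm{imm})$ but by the $q$-twisted homomorphism $\widetilde{\mu}$ on homology with the Jones local system (Theorem 7.3); the paper handles this by proving the Conway case $q=1$ first and passing to the Jones case via the deformation of Section 7, and your direct route must build the same twisting into the bookkeeping. None of these invalidates the plan, but all three must be incorporated before the argument closes.
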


We will show that for insensitive $\sigma $ 
the submodule $U$ is completely 
determined by loops in the space of immersions defined from kink crossings,
and by essential singular tori in $M$. This corresponds to previous results of
Kalfagianni \cite{Ka1}, \cite{Ka2} and Kalfagianni and Lin \cite{KaL}.

\vskip .1in

The module $\mathcal{R}(\mathfrak{b}[0])[[I]]$ is an algebra with multiplication 
defined by formal multiplication of homotopy classes of maps.
This multiplication induces on $S(\sigma )[[I]]$ 
the structure of a \textit{skein algebra} if $U$ is an
ideal in $\mathcal{R}(\mathfrak{b}[0])$. The precise geometric relevance 
of such an algebra structure seems not yet have been studied.  

\vskip 0.1in

\noindent \textit{Question.}\
For which $3$-manifolds $M$ and skein relations $\sigma $ 
can we choose $\mathfrak{s}$ such that
$U(\sigma ,\mathfrak{s})$ is an ideal? How can we modify the ring
$\mathcal{R}$ to 
define a multiplication on $\mathcal{S}(\sigma )$, which \textit{deforms}
the multiplication of free homotopy classes in $M$. (This is known to 
be true for cylinders over surfaces, \cite{P2}, \cite{Tu}.)

\vskip 0.1in

From Theorem 1.1 some explicit results
concerning classical skein modules can be deduced.
$\mathcal{R}:=\mathbb{Z}[q^{\pm 1},z,h]$. 
Then $z\neq h\in \mathcal{R}$ are non-invertible elements. Let
$\sigma (K_*)=hK_0$ for a self-crossing and $\sigma (K_*)=zK_0$ 
for a crossing of different components of $K_*$. The skein module 
$\mathcal{J}(M)$ is defined by the free $\mathcal{R}$-module with 
basis isotopy classes of oriented links, 
and relations 
$q^{-1}K_+-qK_-=\sigma (K_*)$ is the called the \textit{generalized 
Jones module} and has been 
considered in \cite{Tu}, \cite{P3}. The skein module 
$\mathcal{H}(M)$ is defined by framed oriented links with
$\mathcal{R}:=\mathbb{Z}[q^{\pm 1},v^{\pm 1},z,h]$ and relations 
$q^{-1}K_+-qK_-=\sigma (K_*), K^{(+)}=qv^{-1}K$. This
is the \textit{(variant)} Homflypt \textit{skein module} (with split
variables). For $z=h=s^{-1}-s$ this module
has been discussed e.\ g.\ in \cite{M}, see also \cite{GZ1},
\cite{GZ2}
for results about
of $S^2\times S^1$ and connected sums.

The skein modules defined by imposing additional vaccuum relations  
$(q^{-1}-q)\emptyset=hU$ respectively $(v-v^{-1})\emptyset=hU$ for the
unknot $U$ and the empty link $\emptyset$ will be denoted
$\mathfrak{J}(M)$ and $\mathfrak{H}(M)$ respectively. 

The skein modules of oriented links $\mathfrak{J}(M)$ respectively framed links
$\mathfrak{H}(M)$ are modules over the corresponding skein modules 
of the $3$-ball, with module actions defined by disjoint union with links in a separated $3$-ball
(here is where we use that $M$ is connected).
Let $\mathfrak{R}$ be the
corresponding skein module of the $3$-ball. 
It is known (\cite{P2} and \cite{Tu}) that
$$\mathfrak{R}\cong \mathbb{Z}[q^{\pm 1},z,h,\frac{q^{-1}-q}{h}]$$
and
$$\mathfrak{R}\cong \mathbb{Z}[q^{\pm},v^{\pm},z,h,\frac{v-v^{-1}}{h}],$$
respectively.
Let $\mathfrak{b}_0$ be the set of homotopy classes of links in
$M$ without homotopically trivial components. Then there are defined
$\mathfrak{R}$-homomorphisms $\mathfrak{s}$ 
from the free modules with basis $\mathfrak{b}_0$ into
the modules $\mathfrak{J}(M)$ respectively $\mathfrak{H}(M)$. 
Then we have
$$\mathfrak{R}\mathfrak{b}_0\cong S\mathfrak{R}\hat{\pi}^0,$$
where $\hat{\pi}^0$ is the set of non-trivial conjugacy classes of the
fundamental group of $M$.
Here the \textit{geometric model} $\mathfrak{s}$  
assigns to a monomial in 
$\hat{\pi}^0$ the oriented link with the homotopy classes of
components given
by the monomial. Note that there is also the trivial monomial $1\in \mathfrak{b}_0$.
The geometric model is called \textit{nice} 
if the following holds: If some element of
$\hat{\pi }^0$ appears repeatedly in the sequence we assume that there
exist self-isotopies of the representing link, which arbitrarily change the
order of components with the same free homotopy classes.
Moreover we assume that multiplication by $\frac{q^{-1}-q}{h}$
respectively $\frac{v-v^{-1}}{h}$ corresponds to adding some unknotted
unlinked component to the corresponding standard link.

A $3$-manifold $M$ is called \textit{atoroidal} respectively 
\textit{aspherical} if each 
essential (i.\ e.\ the induced homomorphism of fundamental groups is 
injective) map of a torus
$S^2\times S^1$ respectively map of a $2$-sphere $S^2$ in $M$ is
homotopic into the boundary of $M$. 

\begin{theorem} Suppose that $M$ is atoroidal and aspherical. Then the
submodules of $\mathfrak{J}(M)$ respectively $\mathfrak{H}(M)$, which
are generated
by the image of a nice geometric model $\mathfrak{s}$, are isomorphic to
$S\mathfrak{R}\hat{\pi}^0$.
\end{theorem}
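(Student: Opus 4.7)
The plan is to deduce Theorem 1.2 directly from Theorem 1.1 by analyzing the transversal string topology homomorphism under the atoroidal/aspherical hypothesis. The overall strategy is to show that the submodule $U(\sigma,\mathfrak{s})$ controlling the $I$-adic completion contains only the relations already encoded in the ring $\mathfrak{R}$ via the vacuum relations, so that the geometric model $\mathfrak{s}$ has no non-trivial kernel on $\mathfrak{R}\mathfrak{b}_0\cong S\mathfrak{R}\hat{\pi}^0$.

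First I would observe that both skein potentials at hand are insensitive in the sense of section 2: $\sigma(K_*)$ is either $hK_0$ or $zK_0$ depending only on whether the crossing $*$ is a self-crossing or a mixed crossing, independently of the ambient topology of $K_*$. By the last clause of Theorem 1.1, the submodule $U$ is therefore generated purely by the expansions of the transversal string topology homomorphism applied to $1$-dimensional homology classes of the spaces of immersions representing the free homotopy types of standard links. To prove Theorem 1.2 it suffices to show that after passing to the quotient by $U$ and reducing modulo the vacuum relations defining $\mathfrak{R}$, the map $\mathfrak{s}: S\mathfrak{R}\hat{\pi}^0 \to \mathcal{R}\mathfrak{b}[[I]]/U$ remains injective.

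The heart of the proof is the topological computation of the relevant $H_1$. The space of immersions of a disjoint union of circles into $M$ decomposes, up to homotopy, as a product of mapping spaces of the single circles; since $M$ is aspherical, each such component has the homotopy type of $BC(\gamma)$, where $C(\gamma)\subset \pi_1(M)$ is the centralizer of a representative of $\gamma\in\hat{\pi}^0$. Accordingly, $H_1$ of the space of immersions is built from the abelianizations of these centralizers together with classes permuting components of the same free homotopy type. Under the hypothesis that $M$ is atoroidal, every centralizer $C(\gamma)$ of a non-trivial element is infinite cyclic, generated by the loop rotating along $\gamma$ itself; its $H_1$ is thus generated by the kink loop (the self-crossing that introduces, then cancels, a single twist on a component). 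The reordering classes are precisely the transposition loops covered by the nice-model hypothesis.

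Accordingly, the transversal string topology homomorphism lands on exactly two types of generators: kink relations and component-swap relations. The kink relations translate, after applying $\sigma$, into the framing relation $K^{(+)}=qv^{-1}K$ in the Homflypt case, and into a scalar multiple of the unknot relation $(q^{-1}-q)\emptyset = hU$ in the Jones case: precisely the vacuum relations already built into $\mathfrak{R}$. The swap relations are absorbed by the nice-model assumption, which both produces self-isotopies reordering same-homotopy-type components and realizes multiplication by $(q^{-1}-q)/h$ or $(v-v^{-1})/h$ as the insertion of an unknotted unlinked circle. Therefore, on the image of $\mathfrak{s}(\mathfrak{b}_0)$, the module $U$ contributes no further relations beyond those defining $\mathfrak{R}$ and the symmetric algebra structure on $\hat{\pi}^0$, yielding the desired isomorphism with $S\mathfrak{R}\hat{\pi}^0$.

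The main obstacle is the topological identification of $H_1$ of the immersion space under the atoroidal/aspherical hypothesis, in particular the claim that centralizers of non-trivial elements are cyclic (this is where atoroidality rules out the essential singular tori alluded to after Theorem 1.1) and that no exotic $H_1$-classes appear from components with trivial homotopy or from multi-component interactions. Secondarily, one must verify that under the nice-model assumption, the kink- and swap-generated relations expanded through $\rho$ really do match the relations defining $\mathfrak{R}$ and the symmetric algebra $S\mathfrak{R}\hat{\pi}^0$ exactly, without contributing additional $I$-adic tails; this is a careful but essentially bookkeeping argument using the explicit form of $\sigma$.
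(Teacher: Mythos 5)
Your overall outline matches the paper's (insensitivity of $\sigma$ reduces $U$ to string topology contributions; atoroidal/aspherical leaves only kink and reordering relations; kinks become the vacuum relation absorbed into $\mathfrak{R}$; nice models absorb the swaps), but the step you call the heart of the proof is carried out by claims that are not available under the stated hypotheses. ``Aspherical'' in this paper only means every map of $S^2$ is homotopic into $\partial M$, so $M$ need not be a $K(\pi,1)$ (sphere boundary components and non-trivial $\pi_2$ carried by them are allowed), and your identification of the components of the mapping space with $BC(\gamma)$ is unjustified. Worse, ``atoroidal'' permits $\pi_1$-injective tori that are homotopic into $\partial M$, so for $M$ with torus boundary (a hyperbolic knot exterior, or $T^2\times[0,1]$) the centralizer of a peripheral element is $\mathbb{Z}\oplus\mathbb{Z}$, not infinite cyclic; your assertion that atoroidality forces cyclic centralizers is false in exactly the cases the theorem is meant to cover. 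The correct argument (the paper's Theorem 6.1, feeding into the proof in Section 9) does not show that such loops are absent; it shows that their images under $\mu_{\bullet}$ (respectively $\widetilde{\mu}$, $\mu_{\mathfrak{f}}$) vanish: one passes to the singular torus adjoint to a loop moving a single component, and either its $\pi_1$-image is cyclic, so the torus homotopes into a solid torus neighbourhood of a knot (with $2$-sphere summands pushed into boundary collars using asphericity), or it is essential and homotopes into a boundary collar by atoroidality; in both cases the loop is homologous into $\textrm{emb}$ avoiding the stationary components, so the string topology contribution dies. Your route would need a separate argument for boundary-parallel tori and for $\pi_2$ carried by boundary spheres, and as written it fails there.

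There is also a gap at the end: identifying $U$ with kink and swap relations does not by itself yield the theorem. The statement is that the submodule of $\mathfrak{J}(M)$ (resp.\ $\mathfrak{H}(M)$) generated by $\mathfrak{s}(\mathfrak{b}_0)$ is isomorphic to $S\mathfrak{R}\hat{\pi}^0$, i.e.\ that the composite $\mathfrak{R}\mathfrak{b}_0\rightarrow \mathfrak{J}(M)\rightarrow \mathfrak{R}\mathfrak{b}_0[[I]]$ is injective. The target is an $I$-adic completion, and injectivity of a free module into its completion is not bookkeeping: it requires $\bigcap_n I^n=0$, which the paper gets from $\mathfrak{R}$ being noetherian together with (the corollary of) Krull's intersection theorem. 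Your proposal never addresses this completion step, and without it the kernel of the composite could a priori be non-trivial even after all relations in $U$ have been accounted for.
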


In section 9 we will see that 
the following result can be used to reproduce some of the results of Kalfagianni \cite{Ka1}.

\begin{theorem} Suppose that $M$ is atoroidal and aspherical. 
Then for each $\beta \in \mathfrak{b}_0$ there is defined a $\mathfrak{R}$-homomorphisms $\iota_{\beta }$
from $\mathfrak{J}(M)$ respectively $\mathfrak{H}(M)$ into 
$\mathfrak{R}[[h]]$ satisfying
$\iota_{\beta }\circ \mathfrak{s}(\beta ')=\delta_{\beta ,\beta '}$
for all $\beta ,\beta ' \in \mathfrak{b}_0$.
\end{theorem}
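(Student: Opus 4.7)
The plan is to realize $\iota_\beta$ as the composition of the inclusion of $\mathfrak{J}(M)$ (respectively $\mathfrak{H}(M)$) into its $(h)$-adic completion, the isomorphism supplied by Theorem 1.1, and the coordinate projection onto the $\beta$-summand of the free module $\mathfrak{R}\mathfrak{b}_0$.

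First, I would apply Theorem 1.1 to the skein potential $\sigma$ defining $\mathfrak{J}(M)$ (resp.\ $\mathfrak{H}(M)$), with ideal $I=(h)$. Since both these local skein potentials are insensitive, the last part of Theorem 1.1 yields the isomorphism
$$\mathcal{S}(\sigma)[[h]]\;\cong\;\mathcal{R}\mathfrak{b}[[h]]/U,$$
with $U$ generated only by images of the transversal string topology homomorphism, together with the expansion $\rho:\mathcal{L}[0]\to\mathcal{R}(\mathfrak{b}[0])[[h]]/U$ satisfying $\rho\circ\mathfrak{s}(\beta')=\beta'$ for every $\beta'\in\mathfrak{b}[0]$.

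Second, since $M$ is atoroidal and aspherical, Theorem 1.2 tells us that the submodule of $\mathfrak{J}(M)$ (resp.\ $\mathfrak{H}(M)$) generated by $\mathfrak{s}(\mathfrak{b}_0)$ is isomorphic to $S\mathfrak{R}\hat\pi^0$. After identifying this submodule with the free $\mathfrak{R}$-module on $\mathfrak{b}_0$, the coordinate projection onto the $\beta$-factor is well-defined and extends via $(h)$-adic completion to a map $\mathfrak{R}\mathfrak{b}_0[[h]]\to\mathfrak{R}[[h]]$. Define $\iota_\beta$ as the composition of this projection with $\rho$. The identity $\iota_\beta\circ\mathfrak{s}(\beta')=\delta_{\beta,\beta'}$ then follows immediately from $\rho\circ\mathfrak{s}(\beta')=\beta'$ and the orthogonality of the coordinate functionals on a free module.

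The main obstacle is verifying that $\rho$, which is defined a priori into $\mathcal{R}(\mathfrak{b}[0])[[h]]/U$, actually factors through the submodule $\mathfrak{R}\mathfrak{b}_0[[h]]$, so that the coordinate projection applies. This rests on showing that, under atoroidality and asphericity, the part of $U$ meeting the $\mathcal{R}\mathfrak{b}_0$-summand comes entirely from kink-loops in the space of immersions: asphericity rules out contributions from essential singular $2$-spheres, atoroidality rules out those from essential singular tori, and what remains is the string topology evaluation on kink loops. By the nice-model hypothesis, such a kink-loop evaluates to a multiple of $\frac{q^{-1}-q}{h}$ (resp.\ $\frac{v-v^{-1}}{h}$) times a standard link with an adjoined unknotted unknot, which is precisely the element inverted in passing from $\mathcal{R}$ to $\mathfrak{R}$. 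Once this bookkeeping is carried out, $\rho$ factors through $\mathfrak{R}\mathfrak{b}_0[[h]]$ as required and $\iota_\beta$ is well-defined; the homomorphism property and $\mathfrak{R}$-linearity are inherited from those of $\rho$ and $\mathrm{pr}_\beta$.
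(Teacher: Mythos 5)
Your proposal is correct and follows essentially the same route as the paper: the paper's proof of Theorem 1.3 also takes the expansion homomorphism from Theorem 1.1, uses the argument in the proof of Theorem 1.2 (insensitivity plus atoroidality/asphericity reducing $U$ to kink relations, which are absorbed into $\mathfrak{R}$ via the vacuum relation) to factor it as $\mathfrak{J}(M)\rightarrow \mathfrak{R}\mathfrak{b}_0[[h]]$, and then composes with the coordinate projection $\mathfrak{R}\mathfrak{b}_0\rightarrow\mathfrak{R}$ determined by $\beta$. The $\delta_{\beta,\beta'}$ identity is obtained exactly as you say, from the fact that $\rho\circ\mathfrak{s}$ is the natural inclusion followed by projection.
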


If a geometric model map is onto and there are no relations then a basis is given by
the geometric models. Using that the arguments contained in the proofs 
in \cite{GM} actually show that there are nice geometric models generating the skein module, 
we deduce the following result. 

\begin{theorem}
Let $M=L(p,q)$ be a Lens space and $p\neq 0$. Then $\mathfrak{J}(M)$
respectively $\mathfrak{H}(M)$ is 
isomorphic to $S\mathfrak{R}\hat{\pi}^0$.
\end{theorem}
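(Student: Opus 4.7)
The plan is to combine Theorem 1.2 (which gives the ``injectivity'' side of the isomorphism) with a spanning result about Lens space skein modules taken from the literature (which gives surjectivity).

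The first step is to verify that $L(p,q)$ with $p\neq 0$ satisfies the hypotheses of Theorems 1.2 and 1.3, namely that it is atoroidal and aspherical in the sense defined above. The universal cover of $L(p,q)$ is $S^3$, which is simply connected, so any continuous map $S^2\to L(p,q)$ lifts to a map $S^2\to S^3$, which is null-homotopic; hence $L(p,q)$ is aspherical (it has no essential singular $2$-spheres). Since $\pi_1(L(p,q))\cong \mathbb{Z}/p$ is finite cyclic, no injective homomorphism $\pi_1(T^2)\cong \mathbb{Z}^2\to \mathbb{Z}/p$ can exist, so any map $T^2\to L(p,q)$ fails to be $\pi_1$-injective and $L(p,q)$ is atoroidal. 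Both hypotheses are therefore vacuous in a strong sense.

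Second, I would apply Theorem 1.2 to $M=L(p,q)$. This yields a nice geometric model $\mathfrak{s}$ whose image in $\mathfrak{J}(L(p,q))$, respectively $\mathfrak{H}(L(p,q))$, generates a submodule $N$ isomorphic to $S\mathfrak{R}\hat{\pi}^0$. (Linear independence of the images $\mathfrak{s}(\beta)$ as $\beta$ ranges over $\mathfrak{b}_0$ may also be extracted separately from the family of homomorphisms $\iota_\beta$ of Theorem 1.3, via the relation $\iota_\beta\circ\mathfrak{s}(\beta')=\delta_{\beta,\beta'}$.) So the only remaining point is to upgrade the inclusion $N\hookrightarrow \mathfrak{J}(L(p,q))$ (resp.\ $\mathfrak{H}(L(p,q))$) to an equality.

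Third, I would invoke the spanning statement extracted from \cite{GM} (and independently \cite{C}), which as noted in the paragraph preceding the theorem produces nice geometric models whose images generate the full skein module of $L(p,q)$. Combining the two containments gives $N=\mathfrak{J}(L(p,q))$ and $N=\mathfrak{H}(L(p,q))$, and then Theorem 1.2 identifies this with $S\mathfrak{R}\hat{\pi}^0$. The main obstacle is entirely in this last ingredient: one must verify that the generators constructed by the projection arguments of \cite{GM} can be arranged so that components with equal free homotopy class admit the required reordering self-isotopies, and that multiplication by $(q^{-1}-q)/h$ respectively $(v-v^{-1})/h$ corresponds to adjoining a split unknotted component to the chosen standard link, i.e.\ that the generating set can be chosen \emph{nice} in the precise sense introduced before Theorem 1.2. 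This verification is a careful reading of \cite{GM} which, as the introduction states, will not be carried out in this article.
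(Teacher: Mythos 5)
Your proposal is correct and follows essentially the same route as the paper: note that $L(p,q)$ with $p\neq 0$ is aspherical and atoroidal so that Theorems 1.2 and 1.3 apply, and then combine this with the surjectivity of the nice geometric model map extracted from a careful reading of \cite{C} and \cite{GM}. Your explicit verification of asphericity/atoroidality and your caveat about checking that the generators of \cite{GM} can be arranged to be nice are exactly the points the paper asserts (and defers) in its own short proof.
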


It follows now easily from Przytycki's universal 
coefficient theorem
for skein modules that the usual variant Homflypt module with
$z=h=:s-s^{-1}$
is a free module over $\mathbb{Z}[q^{\pm 1},v^{\pm 1},
\frac{v-v^{-1}}{s-s^{-1}}]$, and similarly the result for the 
usual Homflypt skein module defined over $\mathbb{Z}[q^{\pm 1},h^{\pm 1}]$. 

\vskip .1in

The combined results of sections 2 and 3 prove the following result, similarly to Theorem 1.1 for the \textit{Vassiliev relation} as described below. 

\vskip .1in

Let $\mathcal{L}:=\cup_{k\geq 0} \mathcal{L}[k]$ be the set of all singular
links in $M$.

Let $\mathcal{R}:=\mathbb{Z}[q^{\pm 1},h]$.
We define the \textit{Vassiliev potential} (not a skein potential according to the definition above) by
$\sigma_V : \mathcal{L}\rightarrow I\mathcal{L}\subset \mathcal{R}\mathcal{L}$ by 
$\sigma _V(K_*)=hK_*$, where $I$ is the ideal generated by $h$.
Then we can define the \textit{infinite Vassiliev module}
$$\mathcal{S}(\sigma_V):=\mathcal{R}\mathcal{L}/im(\partial-q^{-1}\sigma).$$

\vskip 0.1in

Let $\mathfrak{b}[k]$ denote the set of homotopy classes of singular
$k$-links, see \cite{A} and \cite{CR}. Then $\mathfrak{b}$ is in one-to-one correspondence with 
a certain set of chord diagrams in $M$ up to homotopy, with specified ordering of the chords.
We choose geometric models $\mathfrak{s}$, i.\
e.\ singular links realizing the given homotopy classes.
Let $\mathfrak{b}:=\cup_{k\geq 0}\mathfrak{b}[k]$.

\begin{theorem}
Let $\mathfrak{s}$ be a given choice of geometric models.
There is defined a submodule $U_V\subset \mathcal{R}\mathfrak{b}[[I]]$ and map
$$\rho (\sigma_V,\mathfrak{s}): \mathcal{L}\rightarrow
\mathcal{R}\mathfrak{b}[[h]]/U_V,$$
which induces the isomorphism
$$\mathcal{S}(\sigma_V)[[h]]\rightarrow \mathcal{R}\mathfrak{b}[[h]]/U_V.$$
The submodule $U_V$ is generated by \textup{(i)} expansions of    
differentiability relations, $4T$-relations and tangency relations
(defined from the local
stratification of the space of immersions), and \textup{(ii)} expansions of
images of generalized string topology homomorphisms defined on 
the $1$-dimensional homology of spaces of $k$-immersions.
The composition
$$\rho (\sigma_V,\mathfrak{s})\circ \mathfrak{s}:
\mathcal{R}\mathfrak{b}\rightarrow
\mathcal{R}\mathfrak{b}[[h]]/U_V$$
is the composition of the natural map $\mathfrak{b}\rightarrow
\mathcal{R}\mathfrak{b}[[h]]$ with the natural projection.
\end{theorem}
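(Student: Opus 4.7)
The plan is to apply the constructions of sections 2 and 3 to the Vassiliev potential $\sigma_V$ with $I=(h)$. The $2$-groupoid of section 3 is built from the fundamental $2$-groupoid of the stratified space of immersions of circles in $M$, and it carries a canonical representation into the $2$-category of completed free $\mathcal{R}$-modules associated to any potential. Although $\sigma_V$ is not dimension-lowering, the factor $h\in I$ produced at each application of the Vassiliev relation guarantees $h$-adic convergence, exactly paralleling the use of the $I$-adic filtration in Theorem~1.1. The whole proof is then a transcription of that argument, with $\mathcal{L}[0]$ replaced by the full stratification $\mathcal{L}=\cup_k\mathcal{L}[k]$ and with the representation evaluated on $\sigma_V$.

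The map $\rho(\sigma_V,\mathfrak{s})$ is defined by iterated resolution. Given $L\in\mathcal{L}[k]$, pick a generic path in the immersion space from $L$ to the standard model $\mathfrak{s}(\beta)$ of its homotopy class $\beta\in\mathfrak{b}[k]$. Each transverse crossing of the codimension-one double-point stratum contributes a rewriting $K_+=q^2K_-+qhK_*$: the first summand continues on the far side of the wall, while the second is a $(k+1)$-singular link carrying an extra factor of $h$. Recursively applying the same construction to every new $K_*$ produced yields a tree of corrections, each of whose leaves sits strictly deeper in the $h$-adic filtration. The resulting formal sum converges in $\mathcal{R}\mathfrak{b}[[h]]$, and its reduction modulo $U_V$ is declared to be $\rho(\sigma_V,\mathfrak{s})(L)$.

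To see that $\rho$ is well-defined, two different generic paths from $L$ to $\mathfrak{s}(\beta)$ cobound a $2$-cell in the fundamental $2$-groupoid of the stratified mapping space. Under the representation this $2$-cell evaluates to an element that must be declared zero, giving the submodule $U_V$. The generators come from two classes of $2$-cells. First, the codimension-two strata of the singularity stratification: transverse passage through the locus of two simultaneous double-points admits two cyclic orderings and produces the $4T$-relation; the tangential singular stratum produces the tangency relations; discontinuities in the dependence of the resolution on the homotopy parameter produce the differentiability relations. This is the content of~(i). Second, $1$-cycles in the open stratum $\mathcal{L}[k]$ act via the generalized string topology homomorphism attached to $\sigma_V$; transversality of such a loop with the double-point stratum picks out the corresponding generators of~(ii). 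Each $2$-cell meets the singular strata in only finitely many points at any fixed depth, so convergence in $\mathcal{R}\mathfrak{b}[[h]]$ is automatic, and the completion $U_V\subset\mathcal{R}\mathfrak{b}[[h]]$ captures the full indeterminacy.

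Compatibility with $\partial-q^{-1}\sigma_V$ is built into the rewriting rule, so $\rho$ descends to a map $\bar\rho\colon\mathcal{S}(\sigma_V)[[h]]\to\mathcal{R}\mathfrak{b}[[h]]/U_V$. An inverse is induced by $\mathfrak{s}$ extended $\mathcal{R}$-linearly and $h$-adically completed: the generators of $U_V$ map to zero in $\mathcal{S}(\sigma_V)[[h]]$ because they are constructed from isotopies of singular links together with the Vassiliev rewriting itself, both of which are trivialized in $\mathcal{S}(\sigma_V)$. The composition $\rho\circ\mathfrak{s}$ equals the quotient of the inclusion $\mathfrak{b}\hookrightarrow\mathcal{R}\mathfrak{b}[[h]]$ because on a standard link the constant path is admissible and produces no correction. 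The main obstacle is the classification of the $2$-cells of the stratified $2$-groupoid: verifying that the three explicit relation families in~(i) together with the string topology image~(ii) exhaust all relations imposed by codimension-two behavior, and that no further relation of higher order survives $h$-adic completion. This is precisely what the $2$-groupoid analysis of section~3 is organized to achieve.
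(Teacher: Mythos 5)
Your overall strategy is the one the paper actually uses: expand a singular link along a transversal path to its geometric model, rewriting $K_+=q^2K_-+qhK_*$ at each wall-crossing so that every correction term gains a factor of $h$, obtain $h$-adic convergence, and locate the indeterminacy $U_V$ in the evaluation of $2$-cells (codimension-two behavior) together with closed transversal loops in the open strata, which is exactly the content of the formal machinery of section~2 applied to the stratified $2$-groupoids of section~3 and the exact sequence $H_1(\mathrm{emb})\to H_1(\mathrm{imm})\to \mathbb{Z}\mathcal{L}[+1]/\mathcal{D}\to\mathbb{Z}\mathcal{L}\to\mathbb{Z}\mathfrak{b}$ of section~4. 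However, there is a genuine gap in the geometric core of your well-definedness argument: your dictionary between the codimension-two strata and the three relation families in (i) is wrong, and one stratum is missing altogether. In the paper (Theorem~3.2 and Proposition~4.1) the stratum of immersions with \emph{two additional transverse double points} ($K_{**}\in\mathcal{L}[k+2]$) produces the \emph{differentiability} relations $K_{*+}-K_{*-}-K_{+*}+K_{-*}$, i.e.\ the commutation of resolutions at two different crossings; the \emph{geometric $4T$-relations} come from the \emph{triple-point} stratum (immersions with a transverse triple point and $k-1$ further double points, the set $\mathcal{T}[k]$, resolved via $K^{1/2}_{\pm}$); and the tangency relations come from a tangency at one of the $k$ distinguished double points. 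You attribute the $4T$-relation to the locus of two simultaneous double points, you never mention triple points, and you derive the differentiability relations from ``discontinuities in the dependence of the resolution on the homotopy parameter,'' which is not a stratum of the discriminant at all.

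This matters because the whole point of (i) is that the listed local relations \emph{exhaust} what a generic $2$-cell can contribute: a transversal homotopy meets the codimension-two set in isolated points, each lying in one of exactly three strata (two extra double points, one triple point, one tangency), and the monodromy around each type is what generates $\mathcal{D}$ in Proposition~4.1. With the triple-point wall absent from your enumeration, a $2$-cell passing through a triple point produces a relation your argument does not account for, so the claimed completeness of $U_V$ (and hence injectivity of the induced map on $\mathcal{S}(\sigma_V)[[h]]$) is not established as written; conversely the relation you do assign to the two-double-point stratum is not the $4T$-relation but the differentiability relation. The fix is exactly the paper's Lemma~3.1/3.2 transversality analysis: perturb the $2$-cell so the preimage of the discriminant is a $1$-complex whose valence-$4$ vertices map to either the two-double-point or the triple-point stratum and whose valence-$2$ vertices map to tangencies, then read off the three families from the local monodromies, with everything else factoring through the generalized string topology homomorphism $\mu$ on $H_1$ of the spaces of $k$-immersions as in your item (ii).
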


The expansion map $\rho (\sigma_V,\mathfrak{s})$ 
in Theorem 1.5 is defined 
on \textit{all} singular links.
The \textit{local} differentiability and tangency
relations can be subsumed 
in terms of the ordering
of chords. But the geometric $4T$-relations are very hard to control
explicitly.  
This has been the obstruction for a direct topological construction of
Kontsevitch's integral from mapping space topology in the early $90's$  \cite{BS}, \cite{H} which is still open. 

\vskip .1in

\noindent \textit{Question.}\
For which $3$-manifolds $M$ is the homomorphism
$$\mathcal{S}(\sigma_V)\rightarrow \mathcal{S}(\sigma_V)[[h]]$$
injective?
The map $\rho (\sigma_V,\mathfrak{s})$ appears as a kind of limit of Vassiliev
type invariants and thus could be related to questions about the strength of Vassiliev imvariants.

\vskip 0.1in

The relation between skein theory and Vassiliev theory has
been observed by Przytyki, see \cite{P4},  
\cite{Kas} and \cite{BL}.
For $i \geq 0$ let $\mathcal{G}_i(M;\mathcal{R})$ be the quotient
of $\mathcal{R}\mathcal{L}$ by the submodule generated by relations 
$K_*=K_+-K_-$ for \textit{all}
$K_*\in \mathcal{L}$ and $*$ \textit{any} double-point, 
and $K=0$ for each singular
link $K$ with $>i$ double-points.  
Note that $\mathcal{G}_i(M;\mathcal{R})$ is generated by 
either $\mathcal{L}[0]$ or by
the set of homotopy classes of singular $k$-links with $k \leq i$.
The dual of $\mathcal{G}_i(M;\mathcal{R})$ is the module of
$\mathcal{R}$-valued Vassiliev
invariants of order $\leq i$ for links in $M$.
In particular $\mathcal{G}_0(M;\mathcal{R})$ is dual to the module of type
$0$-invariants. This module is isomorphic to $S\mathcal{R}\hat{\pi }$, where
$\hat{\pi }$ is the set of conjugacy classes of the fundamental
group of $M$. The module $\mathcal{G}_1(M;\mathcal{R})$ is the dual of the 
module of type
$1$-invariants, and has been discussed in \cite{KiL}.
It has been observed in \cite{CR} that a generalized 
string topology homomorphism
provides the indeterminancy of the universal type $1$-invariant.
The observation for homotopy skein modules has been made
by the author in \cite{K1}. 

Let
$$\mathcal{G}(M;\mathcal{R}):=\lim_{\leftarrow }
\mathcal{G}_i(M;\mathcal{R})$$
be the \textit{finite Vassiliev module} of $M$. It has been observed by Przytycki that
each skein relation defined by a skein potential $\sigma $ 
(for $\partial (K_*)=K_+-K_-$) defines an epimorphism
$$\mathcal{G}(M;\mathcal{R})\rightarrow \mathcal{S}(\sigma )[[I]],$$
by mapping $K_*\in \mathcal{L}[1]$ to $K_+-K_- -\sigma(K_*)$.

The relation between the infinite and finite Vassiliev module is subtle. The fact that 
$h$ is non-invertible is important here.  
Assume we are in the Conway case $q=1$.
For $i\geq 0$ let
$\mathcal{W}_i(M;\mathcal{R})$ be the quotient of
$\mathcal{R}\mathcal{L}$ by
relations $K_+-K_--K_*$ for $*$ the \textit{last}
double-point, and $K=0$ for singular links with $\geq i$
double-points. Applying the relation to $K_{**}$ we get
$K_{**}=K_{*+}-K_{*-}$. This is equal to $K_{+*}-K_{-*}$, which is the
skein relation applied to another double-point, if and only
if the differentiability relations hold.
Note that $\mathcal{S}(\sigma_V)/h^i\mathcal{S}(\sigma_V)$ 
is \textit{not} generated by
links because we can only use $hK_*=K_+-K_-$ to reduce the number
of double-points. Roughly, this results in the fact that 
the order of chords in chord diagrams representing homotopy classes of
singular links, can be changed only in corresponding powers of $h$.
 
\vskip .1in

Our Theorems 1.1 and 1.5 are linearized versions of a 
general construction on $2$-groupoids equpped with special structure. 
These $2$-groupoids are 
constructed from certain $3$-stage stratifications 
in spaces of $k$-immersions (immersions with $k$ fixed double points, see section 3 for the precise definitions).
We will construct a type of
\textit{deformation} 
of the usual homotopy $2$-groupoid of the 
space of $k$-immersions
of circles in $M$. It will be shown that the resulting
algebraic $2$-groupoid 
induces the usual skein and Vassiliev structures.
The isotopy classes of links
appear as the objects of a
category, transversal deformations between links are the $1$-morphisms
of the category.
It should be possible to generalize the 
$2$-groupoids constructed
in this article to 
$n$-groupoids, deforming
the usual homotopy $n$-groupoids 
of the spaces of $k$-immersions. It could be a future goal to construct 
deformation structures of string topology in this
setting, which could be a future project. In the language
of $2$-groupoids the \textit{string topology} homomorphisms
are the structure, which relate $2$-morphisms and equivalence 
classes of $1$-morphisms.

It will be important that the categorical 
structure above can be \textit{linearized} similarly to the quantum invariant
setting.
Our viewpoint in this paper is much more general than
necessary to deduce the explicit results above. But it is interesting to
realize that \textit{the structures inherent in skein modules and
Vassiliev invariants,  are a necessary consequence of the groupoid
structure}.

Most results of this article could be extended easily to suitable tangles in $3$-manifold following the definitions in
\cite{Ha}, section 6. We decided to keep with the case of links to keep the notation less difficult.   

\section{Formal skein theory}

For details about category and higher category theory see \cite{BD},
(\cite{Ke}, 0.2), \cite{La} and \cite{Ma}.

\vskip .1in

A \textit{$2$-category} is a triple 
$(\mathfrak{ob},\textrm{hom},\textrm{mor})=(\textrm{hom}_0,\textrm{hom}_1,\textrm{hom}_2)$,
for which both $(\mathfrak{ob},\textrm{hom})$ and $(\textrm{hom},\textrm{mor})$ are  
categories in the usual sense. More precisely, the following
structures are given. 
For $x,y\in \mathfrak{ob}$ there is defined 
$\textrm{hom}(x,y)$ with composition: 
$$ \circ : \textrm{hom}(x,y)\times \textrm{hom}(y,z)\rightarrow \textrm{hom}(x,z).$$
The elements in $\textrm{hom}(x,y)$ are called
$1$-morphisms. Composition is often abbreviated $u\circ v=:uv$.
The morphisms between $1 
$-morphisms are called $2$-morphisms.
The set of $2$-morphisms from $u$ to $v$ is denoted $\textrm{mor}(u,v)$,
and is non-empty only for \textit{parallel} objects $u,v\in \textrm{hom}(x,y)$.
There are two compositions of $2$-morphisms.  
For $u,v,w\in \textrm{hom}(x,y)$ there is defined:
$$\circ _1 : \textrm{mor}(u,v)\times \textrm{mor}(v,w)\rightarrow \textrm{mor}(u,w).$$
For $x,y,z\in \mathfrak{ob}$, $u_1,u_2\in \textrm{hom}(x,y)$ and $v_1,v_2\in \textrm{hom}(y,z)$
there is defined
$$\circ _2: \textrm{mor}(u_1,u_2)\times \textrm{mor}(v_1,v_2)\rightarrow \textrm{mor}(v_1\circ
u_1,v_2\circ u_2).$$
Recall the there are the natural source and target maps denoted by
$\textrm{sour}$ and $\textrm{targ}$, defined $\textrm{hom} \rightarrow \mathfrak{ob}$ and $\textrm{mor} \rightarrow \textrm{hom}$ respectively. 

A $2$-category in which \textit{all} morphisms are
equivalences is called a \textit{$2$-groupoid}. Equivalence for $1$-morphisms is in the weak sense \cite{La}.
Thus for $u\in \textrm{hom}(x,y)$ there is exists $v \in \textrm{hom}(y,x)$
such that both $u\circ v$ and $ v \circ u$ are equivalent to
the corresponding identity morphisms $\textrm{id}$. 
Equivalence means here that
$v \circ u$ and $\textrm{id}$ are related by a $2$-morphism, i.\
e.\
$\textrm{mor}(v \circ u,\textrm{id})\neq \emptyset$.
The $2$-morphisms have to be invertible \textit{on the nose}, i.\ e.\ for a $2$ morphism $h\in \textrm{mor}(u,v)$ there exists a $2$-morphism $h^{-1}\in \textrm{mor}(v,u)$ such that $h\circ_1 h^{-1}=\textrm{id}_u$ and 
$h^{-1}\circ_1 h=\textrm{id}_v$, where $\textrm{id}_u, \textrm{id}_v$ are the corresponding identity $2$-morphisms
in $\textrm{mor}(u,u)$ respectively $\textrm{mor}(v,v)$. 
A $2$-groupoid \textit{with inverses} is a small $2$-groupoid with 
a choice of $v:=u^{-1}\in \textrm{hom}(y,x)$ for each $u\in \textrm{hom}(x,y)$.
Recall that an $n$-category is small if the class of objects and
all classes of morphisms are sets.
In the following without mentioning \textit{we will assume that all our
$2$-groupoids are small and with inverses}.
  
The composition of $1$-morphisms is associative up to 
the action of $2$-morphisms. The set of $2$-morphisms $\textrm{mor}(u,u)$ 
is a group under $\circ _1$.
For $x \in \mathfrak{ob}$ the set of equivalence classes of elements in 
$\textrm{hom}(x,x)$ under the action of $\textrm{mor}$
is a group with composition defined by 
$[u]\cdot [v]:=[u\circ v]$. (This is well-defined because of the
composition $\circ _2$.)
Here $u,v\in \textrm{hom}(x,x)$ are equivalent if $\textrm{mor}(u,v)\neq \emptyset$.
 
\begin{definition} A \textit{$2$-category with models} is a  
$2$-categeory with a distinguished subset
$\mathfrak{m}\subset \mathfrak{ob}$ such
that for each $x\in \mathfrak{ob}$ there exists  
$b \in \mathfrak{m}$ such that $\textrm{hom}(x,b)\neq
\emptyset$. If $\textrm{hom}(b_1,b_2)=\emptyset$ for all
$b_1\neq b_2$ then $\mathfrak{m}$ is called
a \textit{minimal set of models} of the $2$-category.
\end{definition}

For each set $X$ let
$F(X)=:F^{(0)}(X)$ be the free group 
generated by the elements of $X$.
Let $F^{(j)}(X):=F(F^{j-1}(X))$ be defined
inductively for $j\geq 1$.
Let  
$$\hat{F}(X):=\prod_{j\geq 0}F^{(j)}(X)
$$
be the group defined by the infinite product.
The group $\hat{F}(X)$ is not a free group but inverse
limit of free groups.

\begin{example} The group $\hat{F}(X)$ is always
non-commutative for $X\neq \emptyset$. Let $X=\{* \}$. Then $F(X)=\mathbb{Z}$
but already $F^{(2)}(X)=F(\mathbb{Z})$ is the free group 
on an infinite number of generators. Consider the normal 
subgroup $2\mathbb{Z}\subset \mathbb{Z}$. Then
$F(2\mathbb{Z})\subset F(\mathbb{Z})$ is obviously not a normal
subgroup.
\end{example}

There are natural inclusion \textit{maps} 
$F^{(j-1)}(X)\subset F^{(j)}(X)$ defined
by the inclusion of the basis.
These maps are not homomorphisms
because the product of the elements $x_1,x_2$ in $F^{(j-1)}$ maps to the
basis element $x_1x_2$.
The inclusions combine to define the natural \textit{shift map}
$\widehat{\textrm{sh}}: \hat{F}(X)\rightarrow \hat{F}(X)$.
Note that each map $Y\rightarrow F(X)$ comes with the
usual extension, usually denoted by the same letter,
$F(Y)\rightarrow F(X)$
from the universal property of the free group functor.
Note that the extension of the inclusion $sh: F^{(j-1)}(X)\subset
F^{(j)}(X)$
is the identity homomorphism. 
Now consider a homomorphism $h: F(Y)\rightarrow F(X)$.
We can consider it as a map from the set $F(Y)$ into the group
$F(X)$ and extend to the homomorphism $F(F(Y))\rightarrow F(X)$.
By composition with the inclusion $F(X)\subset F^{(2)}(X)$ we have
defined the \textit{map} $h_{\bullet }: F^{(2)}(Y)\rightarrow F^{(2)}(X)$. 
Note that there is also the homomorphism $F(h): F^{(2)}(Y)\rightarrow
F^{(2)}(X)$
defined by mapping a word $w_1\ldots w_r$ with $w_i\in F(Y)$
to $h(w_1)\ldots h(w_r)$, which is different from $h_{\bullet }$. 

\begin{remark} The group $\hat{F}(X)$ has a second
natural product reminiscent of the multiplication
of polynomials. In fact if $(a_i),(b_j)$ are two elements in
$\hat{F}(X)$ let $(c_k)$ be the sequence defined by
$$c_k=\sum_{i+j=k}a_ib_j,$$
where the multiplication is performed in $F^{(k)}$ using the inclusion
maps $F^{(i)}\subset F^{(k)}$ and $F^{(j)}\subset F^{(k)}$.

Moreover the shift map defines a natural action of the group
$\hat{F}(X)$ on the set $\hat{F}(X)$ defined by 
$a\star b:=a\cdot \widehat{\textrm{sh}}(b)$ where the multiplication on the right hand
side is the usual multiplication in $\hat{F}(X)$
\end{remark}

There is a related natural construction. Note that the identity map
from the set $F(X)$ into the group $F(X)$ extends to the homomorphism
$F(F(X))\rightarrow F(X)$. By iteration there are naturally defined
homomorphisms $F^{(j)}(X)\rightarrow F(X)$ for each $j$. Thus we have
defined the homomorphism:
$$c: \hat{F}(X)\rightarrow \widetilde{F}(X):=\prod_{j=0}^{\infty}F(X).$$
Note that there is defined a natural shift homomorphism 
$\widetilde{\textrm{sh}}$ on $\widetilde{F}(X)$ such that
$c\circ \widehat{\textrm{sh}}=\widetilde{\textrm{sh}} \circ c$.
The shift homomorphism $\widetilde{\textrm{sh}}$ is a kind of
\textit{universal operator action}
representing the multiplication by a fixed ring element in the
commutative module case.

There is the unique natural \textit{functor}
$$\mathfrak{d}: \textrm{hom} \rightarrow F(\mathfrak{ob})$$
defined by
$$\mathfrak{d}(u)=\textrm{targ}(u)\textrm{sour}(u)^{-1},$$
where $F(X)$ is considered as a monoid.

A \textit{potential} of a $2$-groupoid is a
\textit{contravariant} functor
$$\mathfrak{a}: \textrm{hom} \rightarrow F(\mathfrak{ob}).$$ 
We will use $\mathfrak{a}$ to define inductively 
\textit{power series expansions} 
of objects in terms of models.
Note that $\mathfrak{a}$ uniquely extends to homomorphisms
$$F^{(j)}(\mathfrak{a}): F^{(j)}(\textrm{hom})\rightarrow
F^{(j+1)}(\mathfrak{ob})$$
for $j\geq 1$.
 
\begin{theorem} Let $\mathcal{C}$ be a $2$-groupoid
with models $\mathfrak{m}\subset \mathfrak{ob}$.
Let $\mathfrak{a}$ be a potential. Then there exists a 
natural normal subgroup $\mathcal{A}\subset \hat{F}(\mathfrak{m})$,
and the map
$$\rho : \mathfrak{ob}\rightarrow \hat{F}(\mathfrak{m})/\mathcal{A},$$
such that the induced homomorphism:
$$\rho: \hat{F}(\mathfrak{ob})\rightarrow
\hat{F}(\mathfrak{m})/\mathcal{A}$$
satisfies the formal skein relation:
$$\rho (\mathfrak{d}(u))=\widehat{\rm{sh}}(\rho (\mathfrak{a}(u^{-1})))$$
for all $u\in \rm{hom}_1$ and the induced map
$$\widehat{\text{sh}}: \hat{F}(\mathfrak{m})/ \mathcal{A} \rightarrow \hat{F}
(\mathfrak{m})/ \mathcal{A}.$$
\end{theorem}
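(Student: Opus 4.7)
The approach is a level-by-level recursive construction of $\rho$. The key observation is that the shift map $\widehat{\textrm{sh}}$ strictly raises the degree in $\hat{F}(\mathfrak{m})=\prod_{j\geq 0}F^{(j)}(\mathfrak{m})$, so that solving the desired relation $\rho(\mathfrak{d}(u))=\widehat{\textrm{sh}}(\rho(\mathfrak{a}(u^{-1})))$ for a chosen $u\colon x\to b_x\in\mathfrak{m}$ expresses $\rho(x)$ at level $j$ purely in terms of the values of $\rho$ at strictly lower levels. This breaks what otherwise looks like a circular recursion and is what lets the potential $\mathfrak{a}$, which takes values in $F(\mathfrak{ob})$, propagate into a well-defined element of $\hat{F}(\mathfrak{m})$.

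The plan is as follows. First, invoke the model property to fix, for each $x\in\mathfrak{ob}$, a model $b_x\in\mathfrak{m}$ and a distinguished $u_x\in\textrm{hom}(x,b_x)$, with $b_b=b$ and $u_b=\textrm{id}_b$ for $b\in\mathfrak{m}$. Set $\rho_0(b)=b\in F^{(0)}(\mathfrak{m})$. At each level $j$, extend $\rho_j$ from $\mathfrak{ob}$ to a homomorphism $F(\mathfrak{ob})\to F^{(j)}(\mathfrak{m})$ by the universal property of the free group, and define $\rho_j(x)$ for $x\notin\mathfrak{m}$ as the level-$j$ component of
$$\rho(b_x)\cdot\widehat{\textrm{sh}}(\rho(\mathfrak{a}(u_x^{-1})))^{-1},$$
whose right-hand side at level $j$ depends only on $\rho_i$ with $i<j$ because $\widehat{\textrm{sh}}$ raises the degree. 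By induction on $j$ this defines $\rho\colon\mathfrak{ob}\to\hat{F}(\mathfrak{m})$, which then extends level-wise to $\hat{F}(\mathfrak{ob})$.

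Next, let $R\subset\hat{F}(\mathfrak{m})$ collect the relators $\rho(\mathfrak{d}(u))\cdot\widehat{\textrm{sh}}(\rho(\mathfrak{a}(u^{-1})))^{-1}$ for all $u\in\textrm{hom}_1$, and let $\mathcal{A}$ be the normal closure of $\bigcup_{k\geq 0}\widehat{\textrm{sh}}^{k}(R)$ in $\hat{F}(\mathfrak{m})$; this is the smallest normal subgroup containing $R$ with $\widehat{\textrm{sh}}(\mathcal{A})\subseteq\mathcal{A}$. By design the formal skein relation holds modulo $\mathcal{A}$. Contravariant functoriality $\mathfrak{a}(uv)=\mathfrak{a}(v)\mathfrak{a}(u)$ together with $\mathfrak{d}(uv)=\mathfrak{d}(u)\mathfrak{d}(v)$ ensure that relators associated to a composite 1-morphism are consequences of the relators for its factors, so that different choices of $(b_x,u_x)$ yield the same subgroup $\mathcal{A}$; the 2-morphism structure of the 2-groupoid enters here because 2-equivalent 1-morphisms have equal image under $\mathfrak{a}$, making $\mathcal{A}$ natural with respect to the chosen data.

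The main obstacle is that the inclusions $F^{(j-1)}\hookrightarrow F^{(j)}$ are \emph{not} group homomorphisms (a product of two basis elements of $F^{(j)}$ is a single new basis element of $F^{(j+1)}$), so $\widehat{\textrm{sh}}$ is only a set map on $\hat{F}(\mathfrak{m})$. Consequently the induced $\widehat{\textrm{sh}}$ on $\hat{F}(\mathfrak{m})/\mathcal{A}$ is not automatically well defined from $\widehat{\textrm{sh}}(\mathcal{A})\subseteq\mathcal{A}$ alone; one must verify directly that $a\mathcal{A}=b\mathcal{A}$ implies $\widehat{\textrm{sh}}(a)\mathcal{A}=\widehat{\textrm{sh}}(b)\mathcal{A}$. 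I would handle this by tracking how $\widehat{\textrm{sh}}$ interacts with products in $F^{(j)}$ and conjugation by elements of $\hat{F}(\mathfrak{m})$, and combining the resulting identities with the built-in closure of $\mathcal{A}$ under $\widehat{\textrm{sh}}$; together with the choice-independence established above, this yields the required induced map on the quotient.
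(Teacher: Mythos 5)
Your overall strategy is essentially the paper's: build $\rho$ level by level, using that $\widehat{\textrm{sh}}$ raises degree so the skein relation determines the level-$j$ value from levels $<j$, and absorb the remaining relators into a normal subgroup. The one structural difference is that you fix a distinguished $u_x\in\textrm{hom}(x,b_x)$ for each object and then impose the skein relators for all $u$, whereas the paper fixes no morphism: it defines $\mathcal{A}_j$ (via $\mathcal{B}_{j+1}$) as the normal subgroup generated by the differences $\lambda_{j+1}^{a_1}(u)\bigl(\lambda_{j+1}^{a_2}(v)\bigr)^{-1}$ of expansions over \emph{all} $u,v\in\textrm{hom}(x,m)$ and all lifts, so that $\rho_{j+1}$ is well defined on the quotient, and then \emph{proves} the skein relation from contravariance, $\mathfrak{a}(vu)=\mathfrak{a}(u)\mathfrak{a}(v)$, by comparing the expansion of $\textrm{targ}(u)$ via $v$ with that of $\textrm{sour}(u)$ via $vu$. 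Either route could in principle deliver the relation, so this by itself is not the problem.

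The genuine gap is exactly the point you flag and then defer: the induced map $\widehat{\textrm{sh}}$ on $\hat{F}(\mathfrak{m})/\mathcal{A}$. Since the inclusions $F^{(j)}(\mathfrak{m})\subset F^{(j+1)}(\mathfrak{m})$ are not homomorphisms, your $\mathcal{A}$ (the normal closure of $\bigcup_k\widehat{\textrm{sh}}^k(R)$) is not even visibly closed under $\widehat{\textrm{sh}}$ — the shift of a product of conjugates of relators is not a product of conjugates of shifted relators — and even set-theoretic invariance would not suffice: well-definedness on cosets requires $\widehat{\textrm{sh}}(xa)\in\widehat{\textrm{sh}}(x)\mathcal{A}$ for all $x\in\hat{F}(\mathfrak{m})$, $a\in\mathcal{A}$. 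The paper does not attempt to prove this for a minimal relator subgroup; it builds it in, defining $\mathcal{A}_{j+1}$ to be generated by $\mathcal{B}_{j+1}$ together with all elements $\widehat{\textrm{sh}}(x^{-1})\widehat{\textrm{sh}}(xa)$ for $x\in F^{(j)}(\mathfrak{m})$, $a\in\mathcal{A}_j$, after which the descent of $\widehat{\textrm{sh}}$ is immediate. Without such an enlargement of $\mathcal{A}$ (or an actual verification of the needed identities), your final paragraph is a promissory note rather than a proof. Secondarily, your choice-independence argument invokes that $2$-equivalent $1$-morphisms have the same image under $\mathfrak{a}$, which is not part of the definition of a potential (just a contravariant functor $\textrm{hom}\rightarrow F(\mathfrak{ob})$); the paper sidesteps this by never fixing the morphisms $u_x$, so its $\mathcal{A}$ is natural by construction. (Minor repairs: you never specify $\rho_j(b)$ for $b\in\mathfrak{m}$, $j\geq 1$, and since $\mathfrak{d}(u_x)=b_xx^{-1}$ your defining formula should read $\widehat{\textrm{sh}}(\rho(\mathfrak{a}(u_x^{-1})))^{-1}\rho(b_x)$ rather than the opposite order.)
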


\begin{proof}
The normal subgroup $\mathcal{A}$ and the homomorphism $\rho $
is constructed from a sequence of normal subgroups
$\mathcal{A}_j\subset F^{(j)}(\mathfrak{m})$ and homomorphisms
$\rho _j: \mathfrak{ob}\rightarrow
F^{(j)}(\mathfrak{m})/\mathcal{A}_j$. Let $\mathcal{A}_0$ be the
trivial group.
Define the map $\rho_0: \mathfrak{ob}\rightarrow
F^{(0)}(\mathfrak{m})$ by assigning to to each $x\in \mathfrak{ob}$
its unique model $m\in \mathfrak{m}$.
We define $\rho_1: \mathfrak{ob}\rightarrow
F^{(1)}(\mathfrak{m})/\mathcal{A}_1$ in the following way.
Consider $u\in \textrm{hom}(x,m)$. There is defined $\mathfrak{a}(u)\in
F(\mathfrak{ob})$. So we can apply the induced homomorphism
$$F(\rho_0): F(\mathfrak{ob})\rightarrow F^{(1)}(\mathfrak{m}).$$
Let $\lambda_1(u)$ be the resulting element.
Define $\mathcal{A}_1\subset F^{(1)}(\mathfrak{m})$ to be the normal
subgroup generated by all elements $\lambda_1(u)\lambda_1(v)^{-1}$ for
all $u,v\in \textrm{hom}(x,m)$. Then the image of $\lambda_1(u)$ in the quotient
$F^{(1)}(\mathfrak{m})/\mathcal{A}_1$ does not depend on $u\in
\textrm{hom}(x,m)$ but only on $x$. Next suppose that we have defined 
$\rho_j: \mathfrak{ob}\rightarrow F^{(j)}/\mathcal{A}_j$. Let
$\rho_j^1$ be a lift of $\rho_j$ to $F^{(j)}(\mathfrak{m})$.
Note that the set of lifts is an orbit of
of $\mathcal{A}_j$, and we can write $\rho_j^a$ for the lift differing
from $\rho_j^1$ by the action of $a\in \mathcal{A}_j$. For
$u\in \textrm{hom}(x,m)$ define 
$\lambda_{j+1}^a(u)\in F^{(j+1)}(\mathfrak{m})$ by
application of 
$$F(\rho_j^a): F(\mathfrak{ob})\rightarrow F(F^{(j)}(\mathfrak{m})=F^{(j+1)}(\mathfrak{m})$$   
to the element $\mathfrak{a}(u)$. 
Then define $\mathcal{B}_{j+1}$ by the normal subgroup of
$F^{(j+1)}(\mathfrak{m})$ defined by all elements
$$\lambda_{j+1}^{a_1}(u)(\lambda_{j+1}^{a_2}(v))^{-1}$$
for all $u,v\in \textrm{hom}(x,m)$ and all $a_1,a_2\in \mathcal{A}_j$.
Then the image $\rho_{j+1}(u)$ of $\lambda_1^a(u)$ in the quotient
$F^{(j+1)}(\mathfrak{m})/\mathcal{B}_{j+1}$ does not depend on the
choice of $a\in \mathcal{A}_j$ or $u\in \textrm{hom}(x,m)$ and $\rho_{j+1}$ is
well-defined. Finally define $\mathcal{A}_{j+1}$ by the normal
subgroup generated by $\mathcal{B}_{j+1}$ and all elements of the form
$\widehat{\textrm{sh}}(x^{-1})\widehat{\textrm{sh}}(xa)$ for all $x\in F^{(j)}(\mathfrak{m})$ and all
$a\in \mathcal{A}_j$. 
Then obviously $\widehat{\textrm{sh}}$ induces the map
$\widehat{\textrm{sh}}: \hat{F}(\mathfrak{m})/\mathcal{A} \rightarrow
\hat{F}(\mathfrak{m})/\mathcal{A}$.
The second claim follows comparing $\rho (\textrm{targ}(u))$ and $\rho
(\textrm{sour}(u))$. In the definition of $\rho $ any morphism can be chosen so
for a choice of $v\in \textrm{hom}(\textrm{targ}(u),m)$ in the first case we can choose
$vu\in \textrm{hom}(\textrm{sou}(u),m)$ in the second case where $m$ is the model for
$\textrm{targ}(u)$ and $\textrm{sour}(u)$. Then the equality follows immediately from
$\mathfrak{a}(vu)=\mathfrak{a}(u)\mathfrak{a}(v)$.  
\end{proof}

The subgroup $\mathcal{A}$ depends inductively on all possible choices of 
morphisms in the sets $\textrm{hom}(x,m)$. This is difficult to describe in
the most abstract setting, in particular because of the lack of
structure of the shift $\widehat{\textrm{sh}}$. 
Thus we consider the composition of $\rho $
with the projection onto $\widetilde{F}(\mathfrak{m})$. 

For each set $X$ and subset $U\subset \widetilde{F}(X)$ let
$(U)_{\widetilde{\textrm{sh}}}$ denote the smallest normal subgroup of 
$\widetilde{F}(X)$ containing all subsets $\widetilde{\textrm{sh}}^j(U)$ for all
$j\geq 0$.  

\begin{theorem}
The map $\rho $ induces the isomorphism
$$\widetilde{\rho }:
\widetilde{F}(\mathfrak{ob})/(\mathfrak{d}(u)\widetilde{\rm{sh}}(\mathfrak{a}(u^{-1}),u\in
\rm{hom})_{\mathfrak{\rm{sh}}} \cong \widetilde{F}(\mathfrak{m})/\widetilde{\mathcal{A}}.$$
\end{theorem}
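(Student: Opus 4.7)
The plan is to set $\widetilde{\rho}:=c\circ \rho$ and $\widetilde{\mathcal{A}}:=c(\mathcal{A})$, where $\rho$ and $\mathcal{A}$ come from Theorem 2.1, and then extend $\widetilde{\rho}$ componentwise to a group homomorphism $\widetilde{F}(\mathfrak{ob})\rightarrow \widetilde{F}(\mathfrak{m})/\widetilde{\mathcal{A}}$. Since $c$ is slot-wise surjective in the product $\widetilde{F}(X)=\prod_{j\geq 0}F(X)$, the image $\widetilde{\mathcal{A}}$ is automatically normal, and the extension of $\widetilde{\rho}$ is forced slot-by-slot by the universal property of the free group functor. The key simplification relative to Theorem 2.1 is that on $\widetilde{F}$ the shift $\widetilde{\textrm{sh}}$ is an honest group endomorphism in each factor, related to the shift on $\hat{F}$ by $c\circ \widehat{\textrm{sh}}=\widetilde{\textrm{sh}}\circ c$.

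Well-definedness of the forward map and surjectivity both flow directly from Theorem 2.1. Applying $c$ to the formal skein identity $\rho(\mathfrak{d}(u))=\widehat{\textrm{sh}}(\rho(\mathfrak{a}(u^{-1})))$ yields $\widetilde{\rho}(\mathfrak{d}(u))=\widetilde{\textrm{sh}}(\widetilde{\rho}(\mathfrak{a}(u^{-1})))$, which is exactly the statement that the declared generators lie in the kernel of the extended $\widetilde{\rho}$; closure under $\widetilde{\textrm{sh}}$ of this kernel is automatic because the auxiliary family of generators of $\mathcal{A}$ used in the proof of Theorem 2.1, namely $\widehat{\textrm{sh}}(x^{-1})\widehat{\textrm{sh}}(xa)$ for $a\in \mathcal{A}_j$, project under $c$ to elements forcing $\widetilde{\textrm{sh}}(\widetilde{\mathcal{A}})\subset \widetilde{\mathcal{A}}$ (this time using that $\widetilde{\textrm{sh}}$ is a homomorphism), so $\widetilde{\textrm{sh}}$ descends to the quotient. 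Surjectivity is immediate: each $m\in \mathfrak{m}$ is sent by $\widetilde{\rho}$ to its own class, and the $m\in \mathfrak{m}$ generate $\widetilde{F}(\mathfrak{m})$ as a group.

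For injectivity I would construct an inverse from the inclusion $\mathfrak{m}\hookrightarrow \mathfrak{ob}$, which induces a group homomorphism $\iota:\widetilde{F}(\mathfrak{m})\rightarrow \widetilde{F}(\mathfrak{ob})$; composing with projection to the left-hand quotient gives the candidate inverse. The main obstacle is to verify that $\iota(\widetilde{\mathcal{A}})$ lies in the skein normal subgroup. I would establish this by induction on $j$ following the construction of $\mathcal{A}_j$ in the proof of Theorem 2.1: the generators of $\mathcal{A}_{j+1}$ take the form $\lambda_{j+1}^{a_1}(u)(\lambda_{j+1}^{a_2}(v))^{-1}$ for $u,v\in \textrm{hom}(x,m)$ and $a_1,a_2\in \mathcal{A}_j$, and after unwinding the definitions the contravariance identity $\mathfrak{a}(vu)=\mathfrak{a}(u)\mathfrak{a}(v)$ converts each such difference into an iterated skein relation indexed by the loop $uv^{-1}\in \textrm{hom}(m,m)$, together with lower-level ambiguities absorbed by the inductive hypothesis. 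The two compositions are then identity on generators: $\widetilde{\rho}\circ \iota$ fixes each $m\in \mathfrak{m}$ by the base case $\rho_0$ of Theorem 2.1, while for each $x\in \mathfrak{ob}$ the inductive formula for $\rho(x)$ in terms of a chosen $u\in \textrm{hom}(x,m)$ pulls back under $\iota$ to the identity $x\equiv m\cdot \widetilde{\textrm{sh}}(\mathfrak{a}(u))$ modulo the skein subgroup, which is precisely an instance of the defining relation of the left-hand quotient.
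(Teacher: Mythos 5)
Your proposal follows the paper's own proof essentially step for step: set $\widetilde{\rho}=c\circ\rho$ and $\widetilde{\mathcal{A}}=c(\mathcal{A})$, extend over $\widetilde{F}(\mathfrak{ob})$ slot by slot using the shifts (so the shift-closure of the relations $\mathfrak{d}(u)\widetilde{\textrm{sh}}(\mathfrak{a}(u^{-1}))$ dies by the formal skein identity of Theorem 2.1 and the equivariance $c\circ\widehat{\textrm{sh}}=\widetilde{\textrm{sh}}\circ c$), obtain surjectivity because models map to their own classes, and obtain injectivity from the inclusion $\widetilde{F}(\mathfrak{m})\subset\widetilde{F}(\mathfrak{ob})$ inducing an inverse on quotients since $\widetilde{\mathcal{A}}$ maps trivially into the left-hand quotient. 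The only difference is that you spell out, by induction on the stages $\mathcal{A}_j$, what the paper compresses into ``by the very construction of $\mathcal{A}$'' and ``easily implies injectivity''; this is an elaboration of the same argument, not a different route.
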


\begin{proof} 
The homomorphism $c$ induces the epimorphism
$$\hat{F}(\mathfrak{m})/ \mathcal{A} \rightarrow
\widetilde{F}(\mathfrak{m})/
\widetilde{\mathcal{A}}$$
with $\widetilde{\mathcal{A}}:=c( \mathcal{A})$.
Thus by composition we have defined
$$\mathfrak{ob}\rightarrow
\widetilde{F}(\mathfrak{m})/\widetilde{\mathcal{A}}.$$
We extend this map to the homomorphism $\overline{\rho}$ defined on
$\widetilde{F}(\mathfrak{ob})=\prod_{j=0}^{\infty}F(\mathfrak{ob})$ by
mapping the sequence $(u_j)$ to $\prod_{j\geq
0}\widetilde{\textrm{sh}}^j(u_j)$.
This is an infinite product but finite in each factor of
$\widetilde{F}(\mathfrak{m})$, thus well-defined. Obviously
$$((\mathfrak{d}(u)\widetilde{sh}(\mathfrak{a}(u^{-1}),u\in
\textrm{hom}))\subset ker(\overline{\rho }).$$
Let $\widetilde{\rho }$ denote the homomorphism defined on the
quotient.
By construction $\overline{\rho }$ 
maps
$\widetilde{F}(\mathfrak{m})\subset
\widetilde{F}(\mathfrak{ob})$ by projection onto
$\widetilde{F}(\mathfrak{m})/\widetilde{\mathcal{A}}$. This implies
that $\widetilde{\rho }$ is onto.
Moreover the composition $\widetilde{\mathcal{A}}\subset
\widetilde{F}(\mathfrak{m})\subset
\widetilde{F}(\mathfrak{ob})\rightarrow
\widetilde{F}(\mathfrak{ob})/(\mathfrak{d}(u)\widetilde{\textrm{sh}}(\mathfrak{a}(u^{-1}),u\in
\textrm{hom})_{\widetilde{\textrm{sh}}}$ is the trivial homomorphism by the very construction of
$\mathcal{A}$.
This easily implies the injectivity of $\widetilde{\rho }$.
\end{proof}

\begin{remark} The group 
$\widetilde{F}(\mathfrak{ob})/((\mathfrak{d}(u)\widetilde{\rm{sh}}(\mathfrak{a}(u^{-1}),u\in
\textrm{hom}))$ is the completion of the 
\textit{$\mathfrak{a}$-skein group} defined by
$$\mathcal{S}(\mathfrak{a}):=\bigoplus_{j\geq 0}
F(\mathfrak{ob}))/(\mathfrak{d}(u)\textrm{sh}(\mathfrak{a}(u^{-1})))_{\rm{sh}},
$$
where $\rm{sh}$ is the shift homomorphism on the direct sum and 
$(\ )_{\rm{sh}}$ is defined as above.
\end{remark}

The advantage of working with $\widetilde{F}(\mathfrak{m})$ comes
from the easier description of the indeterminancy subgroup
$\mathcal{A}$ resulting from the homomorphism property of
$\widetilde{\textrm{sh}}$.
Note that the contributions to $\widetilde{\mathcal{A}}_{j+1}$
resulting from lower orders now can be 
describe by $\widetilde{\textrm{sh}}(\mathcal{A}_{j})$.

We will \textit{split} the contributions to $\mathcal{A}$ now in a natural way
associated to the two origins of relations. Actually such a splitting
is only possible in each degree but we can argue inductively.
The idea is that, because of the 
$2$-groupoid structure, we can consider all $u\in \textrm{hom}(m,m)$ with
$\textrm{mor}(u,1)\neq \emptyset$ and all those with $\textrm{mor}(u,1)=\emptyset$,
considered up to the action of $\textrm{mor}$. This suffices to dscribe 
$\mathcal{A}_1$ and thus by induction all $\mathcal{A}_j$.
Note that the change from $u: x\rightarrow m$ to $v: x\rightarrow m$
is provided only by $uv^{-1}$ and there is $h\in \textrm{mor}(u,v)$ if and
only if there is $h'\in \textrm{mor}(uv^{-1},1)$. 

Recall that the functor $\mathfrak{d}: \textrm{hom} \rightarrow
F(\mathfrak{ob})$ 
induces the homomorphism
$\mathfrak{d}: F(\textrm{hom} )\rightarrow F(\mathfrak{ob})$ by the free group
property. Also recall that $\rho_0: \mathfrak{ob}\rightarrow
F(\mathfrak{m})$ assigns to each object its model, and let $\rho_0$
also denote the extension to $F(\mathfrak{ob})$. 

\begin{definition} Let $\mathfrak{r} \subset \textrm{mor}$ be a subset.
An potential $\mathfrak{a}$ is called 
\textit{insensitive with respect to $\mathfrak{r}$}
if for each $m\in \mathfrak{m}$ and $r\in \textrm{hom}(m,m)$ such that
there exists $r\in \textrm{mor}(u,1)\cap \mathfrak{r}$, the following holds: 
(i) $\mathfrak{\rho}_0(\mathfrak{a}(u))=0$, and (ii)
there exists 
$w\in F(\textrm{hom})$ such that $\mathfrak{d}(w)=\mathfrak{a}(u)$ and
$\mathfrak{a}(w)=1$. The potential
is called \textit{insensitive} if it is
insensitive with respect to $\mathfrak{r}=\textrm{mor}$.
\end{definition}
 
\begin{theorem}
Suppose that $\mathfrak{a}$ is insensitive. Then
$\widetilde{\mathcal{A}}$ is the normal group generated by the images
of maps 
$$\rm{hom}(\it{m},\it{m})/\rm{mor}\rightarrow \widetilde{F}(\mathfrak{m})$$
for all $m\in \mathfrak{m}$
and all their shifts.
These maps are defined inductively by arbitrary expansion 
of elements in $\rm{hom}(\it{m},\it{m})$ using
the skein relation and following the idea of the  proof of Theorem 2.4. 
It suffices to
consider the image of a generating set of $\rm{hom}(\it{m},\it{m})/\rm{mor}$. 
\end{theorem}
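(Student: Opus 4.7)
The plan is to induct on the level $j$ in the construction of $\mathcal{A}_j$ from the proof of Theorem 2.4, showing that under insensitivity the only genuinely new generators at each level come from classes in $\textrm{hom}(m,m)/\textrm{mor}$. The starting point is to rewrite the defining generators of $\mathcal{A}_1$ as loops based at models: for $u,v \in \textrm{hom}(x,m)$, contravariance of $\mathfrak{a}$ gives $\mathfrak{a}(v^{-1}\circ u) = \mathfrak{a}(u)\mathfrak{a}(v)^{-1}$, and hence $\lambda_1(u)\lambda_1(v)^{-1} = \lambda_1(v^{-1}u)$ with $v^{-1}u \in \textrm{hom}(m,m)$. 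So $\mathcal{A}_1$ is already the normal closure of the image of $\bigsqcup_{m} \textrm{hom}(m,m) \to F^{(1)}(\mathfrak{m})$ under $\lambda_1$. Condition (i) of Definition 2.7 then kills the null-homotopic classes: if $\textrm{mor}(w,1) \neq \emptyset$ then $\rho_0(\mathfrak{a}(w)) = 1$, and so $\lambda_1(w) = F(\rho_0)(\mathfrak{a}(w)) = 1$, giving a well-defined map $\textrm{hom}(m,m)/\textrm{mor} \to F^{(1)}(\mathfrak{m})$.

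For the inductive step I would analyse the generators of $\mathcal{A}_{j+1}$ produced in Theorem 2.4: the subgroup $\mathcal{B}_{j+1}$ together with the shift relators $\widehat{\textrm{sh}}(x^{-1})\widehat{\textrm{sh}}(xa)$ for $a \in \mathcal{A}_j$. The shift relators make $\widehat{\textrm{sh}}$ well-defined modulo $\mathcal{A}$, so inductively every $\widehat{\textrm{sh}}$-shift of a level-$j$ loop relation lies in $\mathcal{A}$. The generators of $\mathcal{B}_{j+1}$ have the form $\lambda_{j+1}^{a_1}(u)(\lambda_{j+1}^{a_2}(v))^{-1}$; the same rewriting as at level one expresses each such element as $\lambda_{j+1}^{a_1}(v^{-1}u)$ modulo a discrepancy arising from the two lifts, and that discrepancy lies in the image of $F(\mathcal{A}_j)$ inside $F^{(j+1)}(\mathfrak{m})$, which by induction belongs to the normal closure of shifts of earlier loop relations. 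It remains to treat $\lambda_{j+1}^{a}(w)$ for $w \in \textrm{hom}(m,m)$ with $\textrm{mor}(w,1) \neq \emptyset$: here I invoke condition (ii) of Definition 2.7 to write $\mathfrak{a}(w) = \mathfrak{d}(w')$ with $\mathfrak{a}(w') = 1$, and then apply the formal skein identity $\rho(\mathfrak{d}(u)) = \widehat{\textrm{sh}}(\rho(\mathfrak{a}(u^{-1})))$ of Theorem 2.4 letter-by-letter along $w'$. This exhibits $\lambda_{j+1}^{a}(w)$ as a $\widehat{\textrm{sh}}$-shift of an expression built from $\mathfrak{a}(w') = 1$ together with a correction from $\mathcal{A}_j$, both of which by induction already belong to $\mathcal{A}$. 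Applying $c$ and using $c \circ \widehat{\textrm{sh}} = \widetilde{\textrm{sh}} \circ c$ then yields the stated description of $\widetilde{\mathcal{A}}$.

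The reduction to a generating set of $\textrm{hom}(m,m)/\textrm{mor}$ follows from the homomorphism property of $F(\rho_j^a)$: since $\mathfrak{a}$ is contravariant, $\mathfrak{a}(w_1 w_2) = \mathfrak{a}(w_2)\mathfrak{a}(w_1)$ and so $\lambda_{j+1}^{a}(w_1 w_2) = \lambda_{j+1}^{a}(w_2)\lambda_{j+1}^{a}(w_1)$ in $F^{(j+1)}(\mathfrak{m})$. Hence the $\lambda$-value of an arbitrary word in loops lies in the subgroup generated by the $\lambda$-values of the letters, and it suffices to test generators of the group $\textrm{hom}(m,m)/\textrm{mor}$.

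The main obstacle is the inductive application of condition (ii): one must check that the letter-by-letter use of the formal skein identity on $\mathfrak{d}(w')$ genuinely produces only $\widetilde{\textrm{sh}}$-shifts of previously established loop relations rather than fundamentally new elements, and that the ambiguity in the choice of lift $a \in \mathcal{A}_j$ is absorbed by the inductive hypothesis. The saving point is that $F(\rho_j^a)$ is an honest homomorphism on the free group $F^{(j+1)}(\textrm{hom})$, so the multiplicative structure of $w'$ is preserved under the expansion, and any discrepancy between two lifts factors through $F(\mathcal{A}_j)$, which already lies in the normal closure by induction.
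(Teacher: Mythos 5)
Your proposal follows essentially the same route as the paper's own (very terse) proof: rewrite the indeterminacy generators as expansions of loops in $\textrm{hom}(m,m)$ via the $2$-groupoid structure, use condition (i) of insensitivity to kill the order-one contribution of loops with $\textrm{mor}(u,1)\neq\emptyset$ and condition (ii) together with the formal skein identity and induction to absorb their higher-order contributions into shifts of lower-order indeterminacies, pass through $c$ to $\widetilde{F}(\mathfrak{m})$ where $\widetilde{\textrm{sh}}$ is a homomorphism, and reduce to a generating set of $\textrm{hom}(m,m)/\textrm{mor}$ by multiplicativity of $\mathfrak{a}$. This matches the paper's argument (and is, if anything, more explicit about the identification of $\mathcal{A}_1$ and the treatment of lift discrepancies), so no changes are needed.
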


\begin{proof}
Let $u\in \textrm{hom}(m,m)$ with $\textrm{mor}(u,1)\neq \emptyset$. 
The contribution from $u$ to $\mathcal{A}_1$ is provided by applying
$\rho_0$ from the proof of Theorem 2.4 (respectively its $c$-projection)
to $\mathfrak{a}(u)$. This involves a choice of homomorphism $w$ from
$\mathfrak{a}(u)$ to some element of $F(\mathfrak{m})$, such that
$\mathfrak{d}(w)=\mathfrak{a}(u)$. Note that because of $(i)$ of
insensitivity we know that the contribution in $\mathcal{A}_1$
vanishes. Moreover because of (ii) we know that the contributions in
higher order vanish too up to indeterminancies in order $\geq 2$.
(We also need that $\mathcal{A}_0$ is the trivial group). By induction
it can be proved that the contributions can be assumed to vanish
in all orders, up to the
indeterminancies resulting from the choices described by elements
of $u\in \textrm{hom}(m,m)$ with $\textrm{mor}(u,1)=\emptyset$.
\end{proof}

Consider $h\in \textrm{mor}(u,v)$ with $u,v: m\rightarrow m$. Let
$$d(h):=\mathfrak{a}(uv^{-1})\in F(\mathfrak{ob}).$$
Note that $d(h)$ actually only depends on its source and target.
Let $\mathcal{D}\subset F(\mathfrak{ob})$ be the normal closure 
of the image of $d$. Then there is well-defined homomorphisms
$$\gamma: \textrm{hom}(m,m)/mor\rightarrow F(\mathfrak{ob})/\mathcal{D},$$
which assigns to $u: m\rightarrow m$ the element 
$\mathfrak{a}(u)$. Here we use the group structure on
$\textrm{hom}(m,m)/\textrm{mor}$ given by $u\cdot v:=vu$, where 
on the right hand side the operation
is composition. The homomorphism $\gamma $ seems to be a kind of 
\textit{categorical} version of a string topology homomorphisms of
Chas and Sullivan. 

\vskip .1in

Next we will linearize the above structures.

\begin{definition} 
A \textit{model complexity} is a map
$$\mathfrak{c}: \mathfrak{m}\rightarrow \mathbb{N},$$
where $\mathbb{N}$ is the set of non-negative integers. 
\end{definition}

If $\mathfrak{m}$ is minimal then the model complexity first
extends to $\mathfrak{ob}$ by $\mathfrak{c}(x)=\mathfrak{c}(m)$, where
$m$ is the unique model of $x$. Then extend to $\textrm{hom}$ by defining
$\mathfrak{c}(u)=\mathfrak{c}(\textrm{targ}(u))=\mathfrak{c}(\textrm{sour}(u))$.
Finally define $\mathfrak{c}(h)=\mathfrak{c}(u)$ for each
$2$-homomorphism $h: u\rightarrow v$. Note that
$\mathfrak{c}(u)=\mathfrak{c}(v)$. There is always defined the trivial
complexity, which is a constant.
If there is no explicit mentioning of a complexity then we
will assume that we work with the trivial complexity.
Let $\mathfrak{ob}(j):=\mathfrak{c}^{-1}(j)$ and similarly for
$\textrm{hom}$ and $\textrm{mor}$.

\begin{definition} Let $R \supset \mathbb{Z}$ be a commutative ring with $1$. 

\noindent (i) A \textit{linear potential} is a contravariant functor:
$$\mathfrak{a}: \textrm{hom} \rightarrow R\mathfrak{ob},$$
where $R\mathfrak{ob}$ is considered as a groupoid (abelian group), 
and $\textrm{hom}$ is
the category with objects $\mathfrak{ob}$ and
morphisms $\textrm{hom}$.

\noindent (ii) Let $I\subset R$ be an ideal and $\mathfrak{c}$ be a
model complexity. Then $\mathfrak{a}$ 
is a linear potential with respect to $I$ if it
maps $\textrm{hom}(j)$ into the direct
sum of 
$$R( \bigcup_{i<j}\mathfrak{ob}(i))\bigoplus I( \bigcup_{
j\leq i\leq \ell}\mathfrak{ob}(i) )$$ for each $j\geq 0$
and a fixed number $\ell $ independent of $j$.  
\end{definition}

Let $\partial: R(\textrm{hom})\rightarrow R\mathfrak{ob}$ be defined by
$$\partial (u)=\textrm{targ}(u)-\textrm{sour}(u).$$

For a trivial model complexity we have a linear potential 
with respect to $I$ 
if it maps \textit{all} homomorphisms 
into $I(\mathfrak{ob})$.

Now replace in the constructions above
for each set $X$ the group product
$\widetilde{F}(X)$ by the infinite product of free modules
$$\prod_{j\geq 0}I^jX.$$
Replace the potential by the linear potential. 
Then the results above admit analogs in the commutative
framework of $R$-modules.
$$\prod_{i\geq 0}I^iX\cong RX[[I]].$$
The isomorphism is defined from the sequence of 
homomorphisms mapping 
$(u_i)\in \prod_{i\geq 0}I^iX$
to $\sum_{i\geq 0}u_i\in RX/I^jX$
for $j\geq 0$. This is compatible with projections and thus defines a
homomorphism into the inverse limit, which is easily be seen to be an
isomorphism. 
Similarly we call $\mathfrak{a}$ insensitive if for each $u\in \textrm{hom}(m,m)$
with $\textrm{mor}(u,1)\neq \emptyset$ we have that
(i) $\rho_0(\mathfrak{l}(u))=0$ and (ii) there exists $w\in \textrm{hom}$ such that
$\mathfrak{a}(u)=\mathfrak{d}(w)$ and $\mathfrak{a}(w)=0$.

\begin{example} 
Let $R$ be a commutative ring with $1$
and
let $I=(h)$. Let $X$ be a set. Let $\mathfrak{c}$ be the 
trivial complexity. 
Then there exists the natural \textit{linearization functor}
$$\widetilde{F}(X)\rightarrow RX[[I]]$$
by defining it on the $i$-th factor of $\widetilde{F}(X)$
such that $x\in X$ maps to $h^ix\in I^iX$. Note that this map
factors through $\mathbb{Z}(X)$ on each factor.
By composition with the linearization functor a potential becomes linear.
\end{example}

\begin{theorem} Let $\mathfrak{a}$ be a linear potential with
respect to $I$ and model complexity $\mathfrak{c}$. 
Suppose that $\mathfrak{a}$ is insensitive.
Then there
exist a map
$$\mathfrak{ob}\rightarrow R\mathfrak{m}/\mathcal{I},$$
where the submodule $\mathcal{I}$ is generated by the images
and $I$-translates of generators under maps
$$\rm{hom}(\it{m},\it{m})/\rm{hom}_2\rightarrow R\mathfrak{m}[[I]]\cong \prod_{i=o}^{\infty}I^i\mathfrak{m}.$$
Moreover there is an induced homormphism:
$$R\mathfrak{ob}/(\partial (u)-\mathfrak{a}(u),u\in \rm{hom})
\cong R\mathfrak{m}[[I]]/\mathfrak{I},$$
and an induced isomorphism
$$\left( R\mathfrak{ob}/(\partial (u)-\mathfrak{a}(u),u\in \rm{hom}) \right)[[I]]
\cong R\mathfrak{m}[[I]]/\mathfrak{I}.$$
\end{theorem}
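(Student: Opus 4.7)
The plan is to transplant the inductive construction of Theorems 2.4 and 2.7 into the linear setting, with the iterated free-group tower replaced by the $I$-adic tower $\prod_{j \geq 0} I^j R\mathfrak{m} \cong R\mathfrak{m}[[I]]$. The model-complexity grading on $\mathfrak{a}$ (it sends $\textrm{hom}(j)$ into a sum of lower-complexity terms plus an $I$-multiple of terms at complexity level $\geq j$) is precisely what forces each step of the induction to raise the $I$-adic degree, so that the process converges in the $I$-adic topology.

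First I would construct $\rho$ inductively, mirroring Theorem 2.4. Set $\rho_0$ to assign each $x \in \mathfrak{ob}$ its unique model $m(x)$, and assume $\rho_j: \mathfrak{ob} \to R\mathfrak{m}/\mathcal{I}_j$ has been defined modulo $I^{j+1}$. Pick a lift $\rho_j^a$ to $R\mathfrak{m}$; for $u \in \textrm{hom}(x,m(x))$ apply the $R$-linear extension of $\rho_j^a$ to $\mathfrak{a}(u) \in R\mathfrak{ob}$ to obtain the candidate $\lambda_{j+1}^a(u)$. Define $\mathcal{I}_{j+1}$ as the submodule generated by the differences $\lambda_{j+1}^{a_1}(u) - \lambda_{j+1}^{a_2}(v)$ for arbitrary choices, together with $I \cdot \mathcal{I}_j$, and set $\mathcal{I} := \varinjlim \mathcal{I}_j$. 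The linear analogue of the source--target argument at the end of the proof of Theorem 2.4 then establishes $\rho(\partial u) = \mathfrak{a}(u) \pmod{\mathcal{I}}$: for $u \in \textrm{hom}(x,y)$ and $v \in \textrm{hom}(y,m)$, use $vu$ to compute $\rho(x)$ and $v$ to compute $\rho(y)$, so that their difference is exactly $\mathfrak{a}(u)$.

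Second, I would invoke insensitivity to reduce the generators of $\mathcal{I}$. Condition (i) of the linear version of Definition 2.8, applied to $u \in \textrm{hom}(m,m)$ with $\textrm{mor}(u,\mathbf{1}) \neq \emptyset$, kills the first-order contribution $\rho_0(\mathfrak{a}(u))$; condition (ii) provides $w \in \textrm{hom}$ with $\mathfrak{a}(u) = \partial(w)$ and $\mathfrak{a}(w) = 0$, which cancels the contribution at every higher level by induction on $j$, exactly as in the proof of Theorem 2.7. What remains are the images of the generators of $\textrm{hom}(m,m)/\textrm{hom}_2$ under iterated application of $\mathfrak{a}$ and $\rho_0$, together with their $I$-translates; this is the description of $\mathcal{I}$ claimed in the statement.

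Finally, I would deduce the two displayed isomorphisms by the pattern of Theorem 2.5. The $R$-linear extension $\overline{\rho}: R\mathfrak{ob}[[I]] \to R\mathfrak{m}[[I]]/\mathcal{I}$ annihilates every relator $\partial(u) - \mathfrak{a}(u)$ and all its $I$-translates by the skein relation above, and is surjective because it restricts to the identity on $R\mathfrak{m}$. For injectivity one observes that each generator of $\mathcal{I}$ lifts to a combination $\partial u - \mathfrak{a}(u)$ in the skein quotient by construction, so pulls back to zero there; completing the first isomorphism at $I$ gives the second. The main obstacle will be the insensitivity step: verifying that each choice-difference $\lambda_{j+1}^{a_1}(u) - \lambda_{j+1}^{a_2}(v)$ lies in $I \cdot \mathcal{I}_j$ under hypotheses (i) and (ii), which is the linearization of the delicate containment $\widehat{\textrm{sh}}(x^{-1})\widehat{\textrm{sh}}(xa) \in \mathcal{A}_{j+1}$ appearing in the proof of Theorem 2.4. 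The linear case is cleaner because the shift $I^j \to I^{j+1}$ is a genuine $R$-module map, but the inductive bookkeeping of how lower-order indeterminacies propagate through $\mathfrak{a}$ still requires care.
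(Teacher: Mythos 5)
Your proposal only reproduces what the paper treats as the easy half of this theorem. For a trivial model complexity the paper's proof is a one-line remark ("analogous to the proofs above"), and your transplantation of the inductive construction of the free-group theorems into the $I$-adic tower, the use of insensitivity to cut the indeterminacy down to the images of $\textrm{hom}(m,m)/\textrm{hom}_2$, and the surjectivity/injectivity argument for the two displayed maps all match that pattern. The actual content of the paper's proof is the case of a nontrivial model complexity $\mathfrak{c}$, and there your argument has a genuine gap, located in the sentence claiming that the grading "forces each step of the induction to raise the $I$-adic degree." It does not: by the definition of a linear potential with respect to $I$ and $\mathfrak{c}$ (Definition 2.4), $\mathfrak{a}$ sends $\textrm{hom}(j)$ into $R\bigl(\bigcup_{i<j}\mathfrak{ob}(i)\bigr)$ plus an $I$-multiple of objects of complexity between $j$ and $j+\ell$; the lower-complexity terms carry coefficients in $R$, not in $I$. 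So a single application of the skein relation $\partial(u)=\mathfrak{a}(u)$ need not raise the $I$-adic order at all, and your induction, indexed purely by the $I$-adic level $j$ and defining $\rho_{j+1}$ by one application of $\mathfrak{a}$ composed with a lift of $\rho_j$, does not converge as stated. (You also drop the bound $\ell$ from your paraphrase — you allow "complexity level $\geq j$" with no upper bound — and without that bound the convergence claim is actually false.)

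What is missing is the double bookkeeping that occupies the paper's proof and the remark following it: each application of the skein relation to an object either strictly lowers complexity (with $R$-coefficients) or gains one factor of $I$ while raising complexity by at most $\ell$; starting from complexity $i$, any term that has resisted gaining $I$-powers $j$ times satisfies $j\leq i+(k-j)\ell$, giving $k-j\geq k-\frac{i+k\ell}{\ell+1}=\phi(i,\ell,k)\to\infty$. Hence, to determine the expansion of an object of complexity $i$ modulo $I^{k+1}$ one applies the inductive procedure $k'$ times with $\phi(i,\ell,k')\geq k$ (roughly $\ell k+1$ applications of the skein relation, as in the paper's subsequent remark), computing mod $I^{k+1}$ throughout; this is what makes the map to $R\mathfrak{m}[[I]]/\mathcal{I}$ well defined and the indeterminacy submodule $\mathcal{I}$ finitely describable level by level. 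Your proposal contains no analogue of this counting, so as written it proves the theorem only under the additional hypothesis that $\mathfrak{c}$ is trivial (equivalently, that $\mathfrak{a}$ maps all of $\textrm{hom}$ into $I\mathfrak{ob}$).
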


\begin{proof} If $\mathfrak{c}$ is a trivial model complexity then the
proof is analogous to the proofs above of similar results.
Otherwise consider the subset $\mathfrak{ob}(i)\subset \mathfrak{ob}$. 
Then we will change the construction of Theorem 2.4 into the $k$-th factor 
in $\prod_iR\mathfrak{m}$. 
Let $\ell $ be the nonnegative integer defined in Definition 2.4.
Then for a given complexity $i$, all elements of $\mathfrak{ob}(i)$ map into
$I^{k-j}\mathfrak{m}$ for some $j\leq i+(k-j)\ell $. This can easily be proved by
induction. Thus $j(\ell +1)\leq i+k\ell $ and 
$$k-j\geq k-\frac{i+k\ell}{\ell +1}=:\phi(i,\ell,k).$$
The sequence of numbers $\phi (i,\ell ,k)$ converges to infinity for
$k$ to infinity. (Essentially we can replace the inverse limit
over $k$ by an inverse limit over $\phi (i,\ell ,k)$, see the remark
below).
Now one can
construct submodules $\mathfrak{I}_j$ and homomorphisms 
$$\mathfrak{ob}\rightarrow \bigoplus_{j\leq
k}R\mathfrak{m}/\mathfrak{I}_k=:\mathfrak{M}_k$$
for each fixed $i,k$. We may have to apply the
inductive procedure of Theorem 4 more than $k$ times to define
the image in $\mathfrak{M}_k$. In fact just apply it $k'$ times with
$\phi (i,\ell,k')\geq k$, but do the computation mod $I^{k+1}$.
\end{proof}

\begin{remark}
It is possible to construct submodules $\mathcal{C}_k\subset 
R\mathfrak{m}/I^k\mathfrak{m}$ and compatible maps
$$\mathfrak{ob}\rightarrow \left( R\mathfrak{m}/I^k\mathfrak{m} \right)
/\mathcal{C}_k$$
directly in the following way. 
The maps will be constructed by application of
the main procedure from Theorem 2.4, i.\ e.\ by replacing objects
by model objects, and elements in the image of $\mathfrak{a}$ based on
the \textit{skein relation} $\partial (u)=\mathfrak{a}(u)$ for $u\in \textrm{hom}$.
More precisely construct maps on $\mathfrak{ob}(i)$ for each $k$ by 
induction on the complexity $i$. Then we can assume that we have the 
expansion in $R\mathfrak{ob}$:
$$x=\sum_ja_jm_j+b+r,$$
with $a_j\in R, m_j\in \mathfrak{m}$ and $r$ a linear combintaion of
skein relations. Let $b=\sum_nb_nx_n$ with $b_n\in I^k$ and $x_n\in
\mathfrak{ob}$.  
Note that we can assume inductively that each object
in the linear combination $b$ has complexity at most $i+\ell k$. Now
apply the skein relations to $x_n$. 
This will give rise to a linear combination  
containing models, elements of the
form $ay$ with $a\in I$ and $y\in \mathfrak{ob}$ and elements of the
form $a'y'$ with $a'\in R$ but $c(y')\leq i+\ell -1$. Thus after at
most $\ell k+1$ applications of the skein relation we will have
written $x$ as a linear combination 
$$x=\sum_ja_j'm_j'+b+b'+r',$$
with $b\in I^{k+1}\mathfrak{ob}$, $c(b')<i$ and $r'$ a linear
combination of skein relations. Now we can apply the
inductive adssumption to expand $b'$ in the form we want, and finally
get an expansion we intend. The indeterminancies result from all
possible choices in the procedure of applications of the skein relation.
\end{remark}

We need one more definition for further discussions.

\begin{definition} We say that the $2$-groupoid is \textit{free on a set of
elementary homomorphisms $S$} if the set $\textrm{hom}$ is in one-to-one
correspondence with the \textit{subset} of $F(S)$ defined by the products of
composable words in $S^{\pm }$ such that the product of words
corresponds multiplication in the free group. In this case we write
$\textrm{hom}=F(S,\circ)$.
\end{definition}   

\section{Mapping spaces and Vassiliev groupoids}

General references for this section are \cite{B}, \cite{V1} and \cite{V2} for the topology and smooth structure of the mapping spaces. For some necessary results concerning transversality also see \cite{Wa2}. 

\vskip 0.1in

Let $M$ be an oriented compact $3$-manifold. Let $\textrm{Diff}(M)$ be the group
of diffeomorphisms of $M$, which are isotopic to the identity.
Let $\textrm{iso}(M)$ be the space of paths $\gamma : I\rightarrow \textrm{Diff}(M)$
with $\gamma (0)=id$. (There is the fibration
$$\Omega \textrm{Diff}(M)\rightarrow \textrm{iso}(M) \buildrel ev \over \longrightarrow \textrm{Diff}(M),$$
with $\textrm{ev}(\gamma )=\gamma (1)$, which is closely related to the $2$-groupoids discussed in this section.)

\vskip .1in

In this section we describe the $2$-groupoids used in this
paper.
Our approach could generalize to spaces $X$ 
with a suitable manifold stratification in codimensions
$\leq n$ (generalizing spaces of immersions of circles into a
$3$-manifold). In fact the easiest application here could be
to spaces of immersions of circles into higher dimensional manifolds.
For different results relating stratified spaces and $2$-catgeory theory see \cite{T}.

\vskip .1in

The idea is to replace the fundamental $n$-groupoid by a
subgroupoid consisting of maps transversal with respect to the
stratification. The resulting $n$-groupoid is \textit{quantized}
with respect to certain morphisms, i.\ e.\ we form a quotient groupoid
in which these morphisms are identities. For $n=2$ and $X$ a space of
immersions of circles into
the $3$-manifold $M$, the quanitzation is 
with respect to $1$-morphisms, which are isotopies (traces of the action
of $\textrm{Diff}(M)$). Here we only describe the resulting $2$-groupoid. 
It turns out that forming the quotient groupoid is a quite delicate problem on its own.
 
\vskip .1in

Let $\widetilde{\textrm{imm}}(j)$ be the set of \textit{singular links}
$f: \cup_{j}S^1\rightarrow M$ as defined for knots in \cite{B}, chapter 3. 
These are immersions, which induce embeddings on $\cup_jS^1\setminus A$
for a finite subset $A$ of $\cup_jS^1$ and for any two distinct points $x_1,x_2\in A$ the branches 
of $f$ at $f(x_1)$ and $f(x_2)$ can have tangency of at most finite order.
We will also assume that all small diagonals are deleted (i.\ e.\ immersions containing identical
components). It is shown in \cite{B} that $\widetilde{\textrm{imm(j)}}$ is an open subset of the
space of all immersions $\cup_jS^1\rightarrow M$, where the last space is equipped with a smooth 
structure based on an infinite limit of Hilbert spaces.  Moreover the proof in (\cite{B}, 3.2.1 Lemma)
generalizes easily from knots to links showing that the inclusion of singular links into the space of 
all immersions is a weak homotopy equivalence.

Note that the diagonals form a subset of infinite
codimension which can be ignored in most discussions.
The symmetric group $\Sigma_j$ acts smoothly and freely on $\widetilde{imm}(j)$ by permuting
the circles in the domain. Let $\textrm{imm}(j)$ be the resulting quotient manifold.

For each space $X$ let $C_i(X)$ denote the space of ordered
configuations of $i$ points in $X$. An ordered $i$-configuration is just
an embedding of the space $*_1\cup \ldots \cup *_i\hookrightarrow X$. 
Let $k\geq 1$. Consider the evaluation map
$$ev: \widetilde{\textrm{imm}}(j)\times C_{2k}(\cup_j S^1)\rightarrow M^{2k}$$
defined by $\textrm{ev}(f,x):=f(x)$.
Let $\Delta \subset M^{2k}$ be the submanifold of those points
$(x_1,y_1,x_2,y_2,\ldots ,x_k,y_k)$ 
with $x_i=y_i$ for $i=1,\ldots k$ but $x_i\neq x_j$ for $i\neq j$.
Note that $\Delta $ is a locally closed submanifold of
$M^{2k}$, and is diffeomorphic to the configuration space
$C_{k}(M)$ of ordered $k$-configurations in $M$.

Let $\widetilde{\textrm{imm}}[k](j):=ev^{-1}(\Delta )$. 
It follows e.g. from \cite{Gl} and standard \textit{local} arguments that this is an infinite dimensional submanifold of 
$\widetilde{\textrm{imm}}(j)\times C_{2k}(\cup_j S^1)$ of codimension $k$. 
The symmetric group acts on 
$\widetilde{\textrm{imm}}(j)\times C_{2k}(\cup_jS^1)$ by simultaneously permuting
the circles in the domain of immersions and the circles in the
codomain of the embeddings in $C_{2k}(\cup_jS^1)$.
The group $\mathbb{Z}_2^k$ acts by permuting 
the points in the domain of embeddings $*_1\cup \ldots
*_{2k}\rightarrow \cup_jS^1$.
Let $\textrm{imm}[k](j)$ be the quotient by the actions. (Again we may delete
diagonals as above to consider a free action.)

Now for $k\geq 0$ let
$$\textrm{imm}[k]:=\bigcup_{j\geq 0}\textrm{\textrm{imm}}[k](j)$$
be the space of \textit{$k$-immersions} in $M$.
Note that a $k$-immersion always has $k$ distinguished \textit{ordered} 
double-points.

\begin{remark} 
The spaces $\textrm{imm}[k](j)$ are related to spaces of 
mappings of graphs into $M$. In fact, one can use the ordering of 
the configuration space coordinates, orientation and minimal distance 
between the $2k$ points in $\cup_jS^1$ to define a map from 
$\cup_jS^1$ onto oriented standard graphs, such
that immersions with the given configuration factor through 
mappings of graphs. This can be done \textit{continously} over $\textrm{imm}[k](j)$. The graphs
are equipped with an orientation around the vertices and basepoints
on components away from the vertices. This will define a continuous
mapping from $\textrm{imm}[k](j)$ onto a space of mappings of graphs into $M$.
Using length parameters on edges of the graphs it is possible
to construct an inverse map. 
\end{remark}

Let $\textrm{emb}[k]\subset \textrm{imm}[k]$ be the subset representing immersions with
precisely $k$-double-points without tangencies. These immersions are
called $k$-embeddings. 

For $k\geq 0$ let $\mathcal{L}[k]$ denote the set of \textit{isotopy} classes
of $k$-embeddings. Here two $k$-embeddings are isotopic if there is a
continous path in $\textrm{emb}[k]$ joining the two $k$-embeddings. 

Then $\mathcal{L}[0]$ is the usual set of
isotopy classes of oriented embeddings of circles also called oriented
links in $M$. The elements of $\mathcal{L}[k]$ in general are called
oriented $k$-links in $M$.

Let $\textrm{imm}[k,1]$ denote the subspace of elements in $\textrm{imm}[k]$ 
representing immersions with precisely $(k+1)$ double-points 
without tangencies. 

\begin{lemma} The space $\rm{imm}[\it{k},\rm{1}]\subset \rm{imm}[\it{k}]$ is naturally homeomorphic to
the space $\rm{emb}[\it{k}+\rm{1}]$.
\end{lemma}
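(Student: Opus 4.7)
The plan is to exhibit a natural continuous bijection $\Phi:\mathrm{emb}[k+1]\to\mathrm{imm}[k,1]$ by forgetting the ordering of the last double point, then show its set-theoretic inverse is also continuous.

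\textbf{Construction of $\Phi$.} At the upstairs level of $\widetilde{\mathrm{imm}}(j)\times C_{2(k+1)}(\cup_j S^1)$, send $(f,(x_1,y_1,\ldots,x_{k+1},y_{k+1}))$ to $(f,(x_1,y_1,\ldots,x_k,y_k))$. Since a $(k+1)$-embedding has exactly $k+1$ transverse double points, its image under the projection lies in $\widetilde{\mathrm{imm}}[k,1](j)$. This projection intertwines the $\Sigma_j$ actions and is equivariant with respect to the inclusion $\mathbb{Z}_2^k\hookrightarrow\mathbb{Z}_2^{k+1}$ acting on the first $2k$ points, so it descends to a continuous map $\Phi$ on the quotients.

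\textbf{Bijectivity.} Given a class $[(f,(x_1,y_1,\ldots,x_k,y_k))]\in\mathrm{imm}[k,1]$, the underlying immersion $f$ has precisely $k+1$ transverse double points of which the configuration singles out $k$; the \emph{extra} double point $p_{k+1}\in M$ is thus canonically determined, and its preimage pair $\{x_{k+1},y_{k+1}\}\subset\cup_j S^1$ is determined up to swap. Appending either ordering to the configuration produces a lift under $\Phi$, and the two choices are identified by the extra $\mathbb{Z}_2$-factor in $\mathbb{Z}_2^{k+1}/\mathbb{Z}_2^k$. Hence $\Phi$ is bijective, with inverse described by adjoining the extra double point.

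\textbf{Continuity of the inverse.} This is the technical core. Near a $(k+1)$-embedding $f_0$ with extra double point $p_0$, transversality at $p_0$ combined with the implicit function theorem (in the form used in \cite{B} and \cite{Gl} to equip $\widetilde{\mathrm{imm}}[k](j)$ with the structure of a codimension-$k$ submanifold of $\widetilde{\mathrm{imm}}(j)\times C_{2k}(\cup_j S^1)$) shows that the extra double point, together with its unordered preimage pair, varies smoothly with $f$. After shrinking the local chart one may consistently label the two preimages to obtain a continuous local section of $\Phi$; different local choices differ by the extra $\mathbb{Z}_2$-factor and so agree after passing to the quotient, yielding global continuity.

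\textbf{Main obstacle.} The delicate point is that the extra preimage pair is intrinsically unordered, so a continuous \emph{global} labeling need not exist on the cover $\widetilde{\mathrm{emb}}[k+1](j)$. The lemma therefore really lives at the level of the $\mathbb{Z}_2^{k+1}$- and $\Sigma_j$-quotients, where this monodromy is absorbed; once that is checked, mutual inverseness of $\Phi$ and the "adjoin the extra double point" map is immediate from the constructions.
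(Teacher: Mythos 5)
Your proposal is correct and is essentially the paper's argument: the identification is the natural one obtained by ordering the extra (unmarked) double point last, which the paper states directly and calls "easily seen to be a homeomorphism." You simply describe the same bijection from the other direction (forgetting the last pair) and supply the details the paper omits, namely the $\mathbb{Z}_2$-ambiguity of the extra preimage pair being absorbed by the $\mathbb{Z}_2^{k+1}$-quotient and the local-section argument for continuity of the inverse.
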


\begin{proof}
There is a natural ordering of the double-points by putting the
distinguished double-point in the last place. Thus the sets are in
one-to-one correspondence, which is easily seen to be a homeomorphism.
\end{proof}

Let $\textrm{imm}[k,2]$ denote the union of the set of elements in $\textrm{imm}[k]$ 
consisting of (i) immersions with $(k+2)$ double-points without tangency, 
(ii) immersions with $(k-1)$ double points without tangency and a triple
point, and (iii) immersions with $k$ double-points, $(k-1)$ of which
without tangency and a tangency at one of the double-points.  
Note that the immersions  
with (i) in $\textrm{imm}[k,2]$ have an ordering of
double-points up to permutations of the last two double-points.
Thus there is a natural $2$-fold covering map $\textrm{emb}[k+2]\rightarrow 
\textrm{imm}[k,2]_{\bullet}$, where $\textrm{imm}[k,2]_{\bullet} \subset \textrm{imm}[k,2]$ is
the subspace satisfying (i). In general let $\textrm{imm}[k,j]_{\bullet}\subset
\textrm{imm}[k]$ be the subspace represented by immersions, whose singularities are
the $k$ ordered double-points without tangency, and $j$
additional double-points without tangency. 

\vskip .1in

We will now describe the sequence of $2$-groupoids $\mathcal{C}[k]$ for
$k\geq 0$. Skein theory and Vassiliev theory should be considered as a
construction on these groupoids.

\vskip .1in

The set of objects of $\mathcal{C}[k]$ will be the 
set $\mathcal{L}[k]$ of singular links with $k$ double points, 
i.\ e.\ isotopy classes
($\textrm{Diff}(M)$-orbits)
of immersions of circles in $M$ with precisely $k$ double points
without tangencies.   

\vskip .1in

The $1$-morphisms in the category $\mathcal{C}[k]$ will be the
\textit{isotopy classes of transversal
paths} in $\textrm{imm}[k]$. 
A path $\gamma: I\rightarrow \textrm{imm}[k]$
is \textit{transversal} if $\gamma (t)\in \textrm{emb}[k]$ except for finitely many 
$t$ where $\gamma (t)$ maps to an immersion with precisely 
$(k+1)$ double points without tangencies. Moreover, in the neighbourhood
of such $t$ the image of $\gamma (s)$ runs through a standard crossing
change in some oriented $3$-ball in $M$. 
Note that a homotopy between two elements of $\textrm{emb}[k]$ can be approximated by 
a transversal homotopy. The set of points in $\textit{imm}[k]$ with
a tangency at one of the $k$ double points has codimension $2$ in 
$\textrm{imm}[k]$, also compare \cite{L}, 3.2. In \cite{L} Lin works in the PL-setting.
The general reference for this transversality arguments and those in the following arguments we 
refer to \cite{V1}. (Here the mapping spaces are replaced by finite dimensional 
approximations using an embedding $M\subset \mathbb{R}^N$ for some $N$ and 
considering \textit{projections} of polynomial approximations in a tubular neighborhood of the embedding.)
Vassiliev's theory will allow us to focus on the codimension of singularities and apply transversality like 
in a finite dimensional setting.  

\vskip .1in

Next we define \textit{isotopy} of transversal paths.
Let $\chi: I\rightarrow \textrm{iso}(M)$ be a \textit{piecewise} continuous path. 
Assume that $t\mapsto \chi (t)(1) \in \textrm{Diff}(M)$
is continuous. Note that $\chi (t)(0)=id$ for all $t$.
The \textit{adjoint} of $\chi $ is the map
$\chi ': I\times I\rightarrow \textrm{Diff}(M)$
defined by $\chi '(t,t')\rightarrow \chi (t)(t')$. 
Then $\chi $ acts as a \textit{deformation}
$I\times I\rightarrow \textrm{imm}[k]$ of $\gamma $ by
$$(t,t')\mapsto \chi '(t,t')\circ \gamma (t).$$
 
For a given transversal path $\gamma $ we assume that $\chi $ only 
jumps at parameters $t$ away from
the singular parameters of $\gamma $. 
We assume that $\chi $ is a jump function in the following way.
At a jump parameter $t$ the 
limit of the isotopies
from the left or right is different from the
isotopy at $t$. But we always assume continuity from left or right.
Then we say that $\chi $ defines the \textit{isotopy} 
$\gamma \rightarrow \gamma '$ with
$\gamma '(t)=\chi (t)(1)\circ \gamma (t)$. The singular 
parameters of $\gamma '$ and $\gamma $ coincide. In this way isotopy 
can change $\gamma $ between singular parameters by arbitrary loops 
in $\textrm{emb}[k]$. 
We will allow a second class of isotopies of a more trivial nature. 
Two transversal paths $\gamma ,\gamma '$ are also isotopic if there
is a transversal homotopy $I\times I\rightarrow \textrm{emb}[k]\cup imm[k,1]$
restricting to $\gamma $ respectively $\gamma '$ on
$I\times \{0 \}$ respectively $I\times \{1 \}$. Transversality of
the homotopy means that the preimage of the set $\textrm{imm}[k]\setminus
\textrm{emb}[k]$ is a $1$-dimensional manifold properly embedded in $M$. 

\vskip .1in

Note that composition of $1$-morphisms is well-defined. In fact, let
$\gamma : K_1\rightarrow K_2$ and $\gamma ':K_2\rightarrow K_3$ 
are homomorphisms for singular links $K_i$, $i=1,2,3$. Here we choose
representative paths $\gamma, \gamma '$ between corresponding
representing immersions. We can choose any isotopy between
representatives $\gamma (1), \gamma '(0)$ of $K_2$, and take 
the usual composition of paths.
The result of the composition does not depend on the choice
of isotopy up to isotopy of transversal paths.
The composition $\circ $ of transversal paths is associative because 
isotopy includes reparamatrizations of the interval.
But in general $\gamma \circ \gamma^{-1}$ is not isotopic to the constant
path because the embedding of the singular parameters in $I$ is
well-defined up to isotopy.
 
\vskip .1in

Finally the $2$-morphisms between any two isotopy classes of 
transversal paths will be defined by \textit{homotopy classes} of 
\textit{transversal homotopies} 
between representing paths in
$\textrm{imm}[k]$. The appropiate notion of homotopy class is defined
below. 
There are obvious ways of horizontal and vertical 
composition of maps $I\times I\rightarrow \textrm{imm}[k]$ with certain boundary
restrictions coinciding (corresponding to $\circ_2$ and $\circ _1$.
A \textit{transversal homotopy} is the result of
\textit{horizontal and vertical composition} of 

(i) continous maps
$$I\times I\rightarrow \textrm{imm}[k],$$
which are constant on $I\times \{0,1 \}$, and

(ii) maps $I\times I\rightarrow \textrm{imm}[k]$, which are adjoint  
to piecewise continuous paths $I\rightarrow \textrm{iso}(M)$ as defined above. 
(in general this is not constant on $I\times \{0,1 \}$, but the
restriction to $I\times
\{0,1 \}$ is two continuous paths in $\textit{emb}$), and

(iii) transversal maps $I\times I\rightarrow \textrm{emb}[k]\cup \textrm{imm}[k,1]$.

\vskip .1in

The transversality of $2$-morphisms will be in the
spirit of Lin \cite{L}. We can assume that the preimage of the set of 
those elements in $imm[k]$, which are not in $emb[k]$, is a graph with
vertices of valence $1$, $2$ or $4$. All vertices of valence $1$ will
be contained in the boundary $\{0,1 \}\times I$ and be mapped to immersions
with precisely $k+1$ double-points without tangencies. 
All the open edges of the graph are mapped to immersions with precisely 
$k+1$ double-points without tangencies. 
The vertices of valence $4$ or $1$ are contained in the interior 
of $I\times I$.
Those of valence $4$ are mapped to either (i) immersions with $k+2$
double-points without tangency, or (ii) immersions with one
transversal triple
point (with a distinguished branch) and $k-1$ double points 
without tangency. The vertices of valence $2$ are mapped to immersions 
with $k$ double points but with a tangency at
one of those double-points. 
Note that for $k=0$ both the tangency contributions and triple points
vanish.
   
Now we explain 
homotopy of transversal homotopies. 
Note that a $2$-morphism has source and target
\textit{isotopy} classes of transverse paths.
We will generate homotopy of $2$-morphisms
by homotopy of (i), (ii) and compositions $\circ _1,\circ_2$.
A homotopy of (i) is a continous map
$$(I\times I)\times I\rightarrow \textrm{imm}[k],$$
which is constant on $(I\times \{0,1\})\times I$.
Finally we \textit{define} any two transversal homotopies of type (ii)
mapping the transversal path $\gamma $ 
to the transversal path $\gamma '$ to be homotopic to the identity 
on the class of $\gamma $.
It is now easy to see that the set of $2$-morphisms $mor$ is
well-defined. Note that the defintions are straightforward in order
to have well-defined
compositions operations on the quotient groupoid.

\begin{lemma} Each homotopy between transversal paths joining two
$k$-embeddings can be approximated relative to the boundary by a
transversal homotopy.
\end{lemma}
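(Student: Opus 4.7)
The plan is to reduce the statement to a parametric transversality argument inside a finite-dimensional approximation of $\textrm{imm}[k]$, following the Vassiliev technique that the author already invokes in the definition of $1$-morphisms. First, I would fix an embedding $M \subset \mathbb{R}^N$ and approximate the given homotopy $H : I \times I \to \textrm{imm}[k]$ by one whose slices are projections of polynomial maps $\cup_j S^1 \to \mathbb{R}^N$ of sufficiently high degree, as in \cite{V1}. This replaces the mapping-space target by a finite-dimensional smooth manifold, inside which the strata $\textrm{imm}[k,1]$ (one extra transverse double point) and $\textrm{imm}[k,2]$ (an extra pair of transverse double points, a transverse triple point, or a tangency at an old double point) are cut out by smooth subvarieties of codimensions $1$ and $2$ respectively.

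Next, I would apply standard parametric transversality: a $C^0$-small perturbation of the finite-dimensional approximation of $H$ can be made simultaneously transverse to each of these strata. Since the domain $I \times I$ is two-dimensional, transversality to the codimension-$1$ stratum produces a properly embedded $1$-submanifold in the preimage, while transversality to the codimension-$2$ strata produces only isolated preimage points. Local normal forms at an extra double point, a triple point, and a tangency — exactly the ones used by Lin \cite{L} in the PL setting and by Vassiliev in the smooth setting — then identify these isolated points as the valence-$4$ vertices (extra double point or triple point) and valence-$2$ vertices (tangency) prescribed in the definition of transversal homotopy, while the $1$-submanifold supplies the open edges of the graph.

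The boundary restriction $H|_{\partial(I\times I)}$ is already in the desired form: on $\{0,1\} \times I$ the homotopy is constant with values in $\textrm{emb}[k]$, and on $I \times \{0,1\}$ it equals the transversal paths $\gamma_0, \gamma_1$ which by hypothesis meet $\textrm{imm}[k,1]$ in finitely many points and avoid $\textrm{imm}[k,2]$. I would therefore use the relative form of parametric transversality, perturbing only in the interior of $I \times I$ so that the result agrees with $H$ on $\partial(I \times I)$. The finite boundary crossings with $\textrm{imm}[k,1]$ appear as the valence-$1$ vertices on $I \times \{0,1\}$ demanded by the definition, and a smoothing or piecewise-linearization of the resulting $1$-dimensional preimage delivers the required graph structure.

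The main obstacle is making transversality rigorous in the infinite-dimensional space $\textrm{imm}[k]$ itself; this is precisely why the Vassiliev finite-dimensional approximation is needed, together with the observation that each stratum is cut out by a finite number of independent smooth evaluation conditions on the polynomial approximants, so that once the approximation is installed the result reduces to classical jet transversality and requires no input beyond what is already cited in the paper.
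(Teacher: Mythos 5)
Your proposal is correct and follows essentially the same route as the paper: the finite-dimensional Vassiliev approximation of the mapping space, relative parametric transversality to the codimension-$1$ stratum $\textrm{imm}[k,1]$ and the codimension-$2$ strata, and the known local normal forms identifying the preimage graph with its valence-$1$, $2$, $4$ vertices. The paper's proof is merely a terser statement of exactly this argument, citing \cite{V1}, \cite{V2} for the approximation and the local models of the singularity strata.
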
 

\begin{proof}  We consider the finite dimensional approximation
model for the space of immersions respectively singular links, see \cite{V1},\cite{V2}.
It can be assumed that evaluation maps restricted to
$\textrm{ev}^{-1}(\Delta )\subset \widetilde{\textrm{imm}}(j)\times C_{2k}(\cup _jS^1)$
corresponding to higher order singularities and jet singularities
are transversal.
Then the result follows from 
known local models of the singularity strata and their codimensions 
in these spaces, for details see \cite{V2}. 
Obviously the open stratum in $\textrm{imm}[k]$ is $\textrm{emb}[k]$,
the codimension-$1$ stratum consists of immersions with precisely
$(k+1)$ ordered 
double-points without tangency.
The codimension $2$-stratum 
is constructed from the codimension-$1$ generic 
degeneracies in the limit set of the immersions in the 
codimension-$1$ stratum.     
\end{proof}

\begin{remark}

\noindent (i) Lin's transversality results \cite{L} are the piecewise
linear versions of the results of the lemma, see also the results by
Stanford \cite{S}, Bar-Natan and Stoimenov \cite{BS} and Hutchings \cite{H}.

\noindent (ii) The result of the lemma holds for any map of a surface
$F\rightarrow \textrm{imm}[k]$ with given transversal map on the boundary 
$\partial F$.

\noindent (iii) It is easy but tedious to prove the groupoid property of the $2$-category $\mathcal{C}[k]$.
The inverse of a transverse path is the inverse path $\gamma^{-1}$ with $\gamma^{-1}(t)=\gamma (1-t)$. 
The homotopy $I\times I\rightarrow \textrm{emb}[k]\cup \textrm{imm}[k,1], (s,t)\mapsto \gamma (t)$ then is 
transversal according to the definition above and shows $\textrm{hom}([\gamma ^{-1}]\circ [\gamma ],*_1)\neq \emptyset $ for the constant path $*_1$ at $\gamma (0)$, after suitable reparametrization. Similarly we deduce from this $\textrm{hom}([\gamma ]\circ [\gamma^{-1}],*_2)\neq \emptyset $ for $*_2$ the constant path in $\gamma (1)$. The inverse of a $2$-morphism represented by a transversal homotopy $h: I\times I\rightarrow  \textrm{emb}[k]\cup \textrm{imm}[k,1]$ is similary defined by the transversal homotopy $h^{-1}(s,t)=h(s,1-t)$. Note that in this case the composition represents the corresponding identity because $2$-morphisms are by definition homotopy classes.
Some care has to be taken of the fact that $1$-morphisms are not necessarily represented by homotopic paths because corresponding transversal path representatives can differ by nontrivial loops in $\textrm{emb}[k]$.  

\end{remark}

\begin{definition} The \textit{graded} $2$-groupoid
$$\mathcal{C}=\mathcal{C}(M):=\bigcup_{k\geq 0}\mathcal{C}(k)$$
is called the \textit{homotopy Vassiliev groupoid} of $M$.
\end{definition}

The following two basic results describe the structure of the
$2$-groupoids $\mathcal{C}[k]$. The first theorem is immediate from
the definitions. The proof of the second one follows from Lemma 3.1.

\begin{theorem}
For each $k\geq 0$ the morphism set $\rm{hom}[\it{k}]$ is 
$\circ $-generated by the elementary morphisms $K_+\rightarrow K_-$
defined by crossing changes, and their inverses $K_-\rightarrow K_+$,
for all $K_*\in \mathcal{L}[k+1]$ with $*$ indicating the last
double-point. By identification of $\mathcal{L}[k+1]$ with the
elementary morphisms in $\mathcal{C}[k]$ we have
$$\rm{hom}[\it{k}]=F(\mathcal{L}[k+1],\circ).$$ 
\hfill{$\square$}
\end{theorem}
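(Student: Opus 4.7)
The plan is to construct an explicit bijection $\textrm{hom}[k] \to F(\mathcal{L}[k+1],\circ)$ that intertwines $\circ$-composition. The key geometric input is that the codimension-$1$ stratum of $\textrm{imm}[k]$ is precisely $\textrm{imm}[k,1]$, so by transversality any representative $\gamma: I \to \textrm{imm}[k]$ of a $1$-morphism meets the singular stratum in a finite ordered set $\{t_1 < \cdots < t_n\}$, and in a neighbourhood of each $t_i$ executes a standard local crossing change at a $(k+1)$-embedding representing some $K_*^{(i)} \in \mathcal{L}[k+1]$ with a definite sign $\epsilon_i \in \{\pm\}$. I send $[\gamma]$ to the word $(K_*^{(1)})^{\epsilon_1} \circ \cdots \circ (K_*^{(n)})^{\epsilon_n}$; it is composable because consecutive generators share the intermediate isotopy class of $k$-link obtained by restricting $\gamma$ to any subinterval of $(t_{i},t_{i+1})$.

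For well-definedness under the two permitted isotopies of transversal paths: (a) $\chi$-deformations keep the singular parameters in $I$ fixed by construction and only alter $\gamma$ between them by loops in $\textrm{emb}[k]$, so the signed sequence is preserved on the nose; (b) for a transversal homotopy $H: I \times I \to \textrm{emb}[k] \cup \textrm{imm}[k,1]$, the preimage $H^{-1}(\textrm{imm}[k,1])$ is a properly embedded $1$-submanifold with boundary on $\{0,1\} \times I$ fixed by the endpoints, so the cardinality and cyclic order of intersections with each $I \times \{s\}$ is constant in $s$, and the induced signs and isotopy classes of $K_*^{(i)}$ are likewise locally constant. Conversely, given a composable word I concatenate the standard elementary crossing-change paths at each $K_*^{(i)}$ with arbitrary connecting paths in $\textrm{emb}[k]$ between endpoints; any two choices differ by a $\chi$-isotopy, so the isotopy class of transversal path is well-defined. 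Since composition on both sides is concatenation, the bijection is functorial in the required sense, yielding $\textrm{hom}[k] = F(\mathcal{L}[k+1], \circ)$.

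The main obstacle lies in point (b): one has to confirm, in the smooth or finite-dimensional approximation setting of \cite{V1}, that a generic transversal homotopy meets $\textrm{imm}[k,2]$ only in isolated interior points of the three prescribed local types (triple point with distinguished branch, tangency at one of the $k$ double-points, or an extra transverse double-point), so that $H^{-1}(\textrm{imm}[k,1])$ is a proper $1$-manifold whose only interior critical behaviour is the one prescribed by the $2$-morphism relations. In particular no \emph{birth or death} of intersection pairs can occur generically in a $1$-morphism isotopy, which is precisely why adjacent inverse generators cannot cancel at the level of $\textrm{hom}[k]$ — this is the geometric content making the group free rather than a quotient, and it is exactly the reason (flagged in the text) that $\gamma \circ \gamma^{-1}$ is not isotopic to the constant path. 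The required transversality is supplied by Lemma 3.1 together with the codimension analysis of Vassiliev, applied inside the polynomial-approximation model.
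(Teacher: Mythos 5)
Your overall strategy (read off the signed word of wall-crossings from a transversal representative, and realize any composable word by concatenating elementary crossing-change paths) is the right one, and it supplies more detail than the paper, whose own justification is only that the statement is ``immediate from the definitions.'' However, your step (b) contains a genuine error, and it is located exactly where the content of the theorem lies. For a generic transversal homotopy $H\colon I\times I\rightarrow \textrm{emb}[k]\cup \textrm{imm}[k,1]$ the preimage $H^{-1}(\textrm{imm}[k,1])$ is a properly embedded $1$-manifold with boundary on $I\times\{0,1\}$ (not on $\{0,1\}\times I$, where the homotopy stays in $\textrm{emb}[k]$), and nothing in transversality prevents an arc of this preimage from having \emph{both} endpoints on the same horizontal edge. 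Such caps and cups are perfectly generic (they correspond to isolated Morse tangencies of the arcs to the slices $I\times\{s\}$), so the cardinality of $H^{-1}(\textrm{imm}[k,1])\cap (I\times\{s\})$ is \emph{not} constant in $s$: births and deaths of pairs of wall-crossings do occur within $1$-morphism isotopies. What transversality of the homotopy actually excludes is contact with the codimension-two strata $\textrm{imm}[k,2]$; it does not make the unreduced crossing word an isotopy invariant.

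This matters because your conclusion is then internally inconsistent. If adjacent inverse generators could never cancel at the level of $\textrm{hom}[k]$, then $\textrm{hom}[k]$ would be the free \emph{monoid} on $\mathcal{L}[k+1]^{\pm 1}$ -- which is precisely the variant $2$-groupoid of Remark 3.3, where the cancellation $uu^{-1}\leftrightarrow 1$ is demoted to a $2$-morphism -- and not $F(\mathcal{L}[k+1],\circ)$ in the sense of Definition 2.7, where the word is taken in the free group and $K_*^{\epsilon}K_*^{-\epsilon}=1$. The remark you invoke, that $\gamma\circ\gamma^{-1}$ is not isotopic to the constant path, belongs to that reparametrization-only variant; leaning on it steers your argument toward the monoid statement rather than the one being proved. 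The correct mechanism is the opposite of what you assert: an isotopy through $\textrm{emb}[k]\cup\textrm{imm}[k,1]$ changes the crossing word exactly by (i) order-preserving matchings along arcs joining bottom to top, along which the class in $\mathcal{L}[k+1]$ and the sign are locally constant, and (ii) insertion or deletion of cancelling pairs coming from caps and cups; since the arcs are disjoint, an innermost cap cancels an \emph{adjacent} pair $K_*^{\epsilon}K_*^{-\epsilon}$, so the induced equivalence on words is precisely equality in the free group, while the exclusion of $\textrm{imm}[k,2]$ (triple points, tangencies, second double-points) is what rules out any further relations -- those are exactly the elementary $2$-morphisms of Theorem 3.2. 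With step (b) corrected along these lines (and the surjectivity/realization direction kept as you have it), your argument does establish the theorem.
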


Note 
that for any $K\in \mathcal{L}[k]$ there is the
unique $1$-morphism $1$, which is represented by \textit{any}
isotopy (path in $\textrm{emb}[k]$)
between representing
embeddings of $K$. 

We describe resolutions of triple points. Consider immersions
in $imm[k]$ with precisely $(k-1)$ double points without tangencies and
a transverse triple point. There is a distinguished branch and an
ordering of the remaining two branches at the triple point. Let
$\mathcal{T}[k]$ denote the set of isotopy classes of these
immersions. Let $K\in \mathcal{T}[k]$. We will define resolutions
$K_{\pm}^{1/2}\in \mathcal{L}[k+1]$ in the following way: $K_{\pm
}^{1/2}$ ist the positive respectively negative resolution of the
double point, which appears when the first respectively second branch
is ignored and only the remaining and the distinguished branch are
considered to intersect at the triple point.  
The geometric $4T$-relation will be built with these singular links.
See \cite{BS}, \cite{H} and \cite{V1} for the description of geometric $4T$-relations.

\begin{theorem} 
The morphism set $\rm{mor}[\it{k}]$ is generated via compositions 
$\circ _1$ and $\circ _2$
by elementary $2$-morphisms of the following form:

\begin{enumerate}

\item[\textup{(i)}] morphisms
$K_{-*}^{-1}K_{*+}^{-1}K_{+*}K_{*-}\rightarrow 1$ ($1$ is the
1-morphism for $K_{--}$) defined for each $K_{**}\in \mathcal{L}[k+2]$.
(differentiablity relations)

\item[\textup{(ii)}] for $k>0$, morphisms
$(K_+^1)^{-1}(K_+^2)^{-1}K_-^1K_-^2\rightarrow 1$
defined for each $K\in \mathcal{T}[k]$
(geometric 4T-relation).

\item[\textup{(iii)}] for $k>0$, morphisms $K\rightarrow K'$ for $K,K'\in
\mathcal{L}[k+1]$ defined by changing the order of the 
two double-points appearing in the natural two-point resolution of
a tangency.  
\end{enumerate}
\end{theorem}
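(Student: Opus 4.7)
The plan is to invoke Lemma~3.1 to represent any element of $\textrm{mor}(u,v)$ by a transversal homotopy $h\colon I\times I\to \textrm{imm}[k]$, and then to chop $I\times I$ into small cells, each supporting one of the three elementary local moves listed in (i)--(iii).

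First, by the transversality arguments of \cite{V1}, \cite{V2} already used in the proof of Lemma~3.1, $h$ may be assumed to have a singular preimage graph
$$G:=h^{-1}(\textrm{imm}[k]\setminus \textrm{emb}[k])\subset I\times I$$
of the form described just before Lemma~3.1: open edges lie in the codimension-$1$ stratum of immersions with $(k+1)$ transverse double-points, and the interior vertices are of exactly three local types --- valence-$4$ crossings of two codimension-$1$ sheets (the $(k+2)$-double-point stratum, case (i)), valence-$4$ triple-point vertices (case (ii)), and valence-$2$ tangency vertices where two of the ordered double-points momentarily collide (case (iii)). Vertices of valence $1$ are confined to $\{0,1\}\times I$ and encode the crossing changes making up the source and target $1$-morphisms.

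Next I would perturb $h$ through a further homotopy of transversal homotopies, which is permitted because the relevant obstructions sit in codimension $\geq 3$, to place $G$ in general position with respect to a fine product grid on $I\times I$, so that each open sub-rectangle contains at most one interior vertex of $G$ and meets each edge of $G$ transversely in a single arc or not at all. The restriction of $h$ to a sub-rectangle containing no interior vertex is an isotopy of transversal paths and is therefore absorbed into the identity of $\textrm{mor}$. A sub-rectangle around a valence-$4$ vertex of $(k+2)$-double-point type has four edges of $G$ emanating from the vertex, each corresponding to one of the four crossing-change $1$-morphisms $K_{*-}$, $K_{+*}$, $K_{*+}^{-1}$, $K_{-*}^{-1}$ associated with the relevant $K_{**}\in\mathcal{L}[k+2]$; reading these around the boundary of the rectangle produces the loop $K_{-*}^{-1}K_{*+}^{-1}K_{+*}K_{*-}$ based at $K_{--}$, which is exactly the word of type (i). A sub-rectangle around a triple-point vertex similarly produces the $4T$-loop of type (ii), while a tangency vertex yields the reordering of the two resolved double-points demanded in (iii).

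Finally I would reassemble $h$ as an iterated $\circ_2$-composition across each row of the grid, followed by an iterated $\circ_1$-composition of the rows, exhibiting $h$ as a composition of the elementary $2$-morphisms in (i)--(iii) together with identity $2$-morphisms. The main obstacle is the combination of infinite-dimensional transversality --- to guarantee that $G$ has only the three listed vertex types in codimension $2$, and that the local models really do reproduce the words in (i)--(iii) on the nose --- together with the bookkeeping of orderings of double-points along each edge of $G$. For the former I would rely on the finite-dimensional jet approximation scheme of \cite{V1}, \cite{V2} and the local models of \cite{L}, \cite{BS}, \cite{H}; for the latter on the convention that along each codimension-$1$ edge the new transverse double-point always occupies the last ordered slot, so that the symbols $K_{*\pm}$ and $K_{\pm *}$ appearing in (i) are unambiguous.
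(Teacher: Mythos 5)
Your argument is correct and is essentially the paper's own proof: the paper simply declares the result ``immediate from the definitions and Lemma 3.2'' (the transversality lemma), since the definition of a transversal homotopy already forces the singular preimage graph to have exactly the three interior vertex types whose local monodromies are the elementary $2$-morphisms (i)--(iii). Your grid decomposition and reassembly via $\circ_1,\circ_2$ is just the natural unpacking of that one-line appeal, so the two proofs coincide in substance.
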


\begin{proof}
The result is immediate from the definitions and Lemma 3.2.
\end{proof}

\begin{remark}
It is possible to define a slight variation of the
$2$-groupoid $\mathcal{C}$ by changing the definition of isotopy 
of $1$-morphisms given above in the following way. Instead of allowing
arbitrary transversal homotopies in $\textrm{emb}[k]\cup \textrm{imm}[k,1]$ only allow
homotopies defined by reparametrizations of the interval (use
homotopies from $\textrm{id}$ into monotone functions $I\rightarrow I$).
Then of course we have to introduce the isotopies of the above form
into the $2$-morphisms. In this case the $1$-morphisms $\textrm{hom}[k]$ will
be identified with the \textit{monoid} generated by
$\mathcal{L}[k+1]^{\pm 1}$. Then in the statement of Theorem 3.2
we have introduce additionally $uu^{-1}\leftrightarrow 1$. 
\end{remark}

We can construct a set of models $\mathfrak{m}$ for the category 
$\mathcal{C}$ by choosing an element in $\textrm{emb}[k]$ for each 
path component of the spaces $\textrm{imm}[k]$ and $k\geq 0$. 
Note that the set of path-components of $\textrm{imm}[k]$ is the set
of homotopy classes of singular $k$-links. 
$\mathfrak{b}=\cup_{k\geq 0}\mathfrak{b}[k]$.
Note that $\mathfrak{b}[0]$ is in one-to-one correspondence with the
set of monomials in $\hat{\pi}$, where $\hat{\pi}$ is the set
of conjugacy classes of $\pi_1(M)$.

For $k\geq 1$, the homotopy classes of singular $k$-links are in
one-to-one correspondence with the set of chord diagrams in $M$ with
$k$ chords. These are usual chord diagrams equipped with free homotopy
classes of maps into $M$ assigned to each component of the complement of the
set of endpoints of chords in $\cup_jS^1$. We will require an ordering
of the chords for each representative chord diagram. Note that it is possible
that chord diagrams with different orderings are homotopic, depending
on the topology of the space of immersions into $M$.    

\vskip .1in

The differentiablility relations
impose \textit{commutativity on the level of models}. This means that
indeterminancies of expansions defined by abstract skein potentials 
in $\tilde{F}(\mathfrak{ob})$
in section 2 contain the commutators of $\mathfrak{ob}$.
Thus it suffices to consider skein
potentials like in the usual skein or Vassiliev theory.

\vskip .1in

Next we will define a certain $2$-groupoid 
$\mathcal{B}$ from suitable bordism classes of mappings 
of $i$-dimensional manifolds into $\textrm{imm}$ for $i=0,1,2$.
In this category
the $1$-morphisms are \textit{linearized}
in a certain way, as is suggested by the notion of skein potential
in a commutative ring with $1$.
The precise definition of this $2$-groupoid, called the 
\textit{homology Vassiliev groupoid} turns out to be quite
interesting and subtle on its own.  
In the following note that bordism and homology coincide in dimensions
$\leq 2$, see (\cite{Kau}, page 319)

\vskip .1in

The objects of $\mathcal{B}[k]$ are the same as those of
$\mathcal{C}[k]$,
i.\ e.\ the isotopy classes of embeddings into $\textrm{emb}[k]$.

\vskip .1in

The $1$-morphisms $x\rightarrow y$, for $x,y\in \mathcal{L}[k]$, 
are the oriented bordism classes $u$ of maps of oriented compact $1$-manifolds
$W\rightarrow \textrm{emb}[k]\cup \textrm{imm}[k,1]$, i.\ e.\ elements of
$H_1(\textrm{emb}[k]\cup \textrm{imm}[k,1],\textrm{emb}[k])$ such that 
$\partial (u)=y-x$, where
$$\partial: H_1(\textrm{emb}[k]\cup \textrm{imm}[k],\textrm{emb}[k])\rightarrow H_0(\textrm{emb}[k])$$
is the usual boundary operator. Note that this is well-defined
since $H_0(\textrm{emb})$ is the free abelian group on $\mathcal{L}[k]$.
Thus the set of $1$-morphisms $\textrm{hom}$ of $\mathcal{B}$ is a certain
\textit{subset} of the homology group. Note that $W$ can be
represented by a map
$$(I\cup \cup_iS^1,\partial I) \rightarrow (\textrm{emb}[k]\cup
\textrm{imm}[k],\textrm{emb}[k]).$$
There can be \textit{arbitrary} maps of closed components
into $\textrm{emb}[k]\cup \textrm{imm}[k,1]$. 
Note that a representative map may very well contain other component
maps $I\rightarrow \textrm{emb}[k]\cup \textrm{imm}[k,1]$. But the homological boundary
of all these components vanishes in $\textrm{emb}[k]$. 

\vskip .1in

The $2$-morphisms $u_1\rightarrow u_2$ for $1$-morphisms 
$u_i: x\rightarrow y$ and $i=1,2$, are more difficult to describe.
See (\cite{Wa}, page 6) for a description of the general bordism set-up used
here.
We consider \textit{bordism classes} of quadruples of maps ($j=1,2$)
$$g: (F,\partial_jF,\partial_1F\cap \partial_2F)
\rightarrow (\textrm{imm}[k],\textrm{emb}[k]\cup
\textrm{imm}[k,1],\textrm{emb}[k]),$$
where $F$ is an oriented compact surface with boundary 
$\partial F=\partial_1F\cup \partial_2F$ such that
$\partial_1F\cap \partial_2F$ is a disjoint union of two 
points. We require that $g|(\partial_jF)$ represents 
$u_j\in H_1(\textrm{imm}[k]\cup \textrm{emb}[k],\textrm{emb}[k])$ for $j=1,2$.
Suppose that we have given two quadruples $g_i$, $i=1,2$, as above:
$$g_i: (F_i,\partial_jF_i,\partial_1F_i\cap \partial_2F_i)\rightarrow 
(\textrm{imm}[k],\textrm{emb}[k]\cup \textrm{imm}[k,1],\textrm{emb}[k]).$$
A bordism from $g_1$ to $g_2$ as above is a quadruple of maps 
$$G: (W,\partial_jW,\partial_1W\cap \partial_2W)\rightarrow
(\textrm{imm}[k], \textrm{emb}[k]\cup \textrm{imm}[k,1],\textrm{emb}[k]),$$
where $W$ is an oriented compact $3$-manifold with corners.
We have that $\partial_j(W)\subset \partial W$ 
is a $2$-manifold with corners for $j=1,2$.
More precisely $\partial W=\partial_1 W\cup \partial_2W\cup F_1\cup
F_2$, $\partial _jW\cap F_i=\partial_jF_i$ for $i,j=1,2$, 
$\partial_jW$ is a bordism between $\partial_jF_1$ and 
$\partial_jF_2$, and $\partial_1W\cap \partial_2W$ is a bordism from
$\partial_1F_1\cap \partial_2F_1$ to $\partial_1F_2\cap
\partial_2F_2$.
The map of $W$ restricts to the maps given by $g_i$ 
on the corresponding strata.

\vskip .1in

The definition of horizontal and vertical compositions of
$2$-morphisms requires cut, paste and smoothing arguments but is straightforward.

\vskip .1in

There is an obvious surjection ($\textrm{mor}$ in the category $\mathcal{B}$)
$$\textrm{mor} \rightarrow H_2(\textrm{imm}[k],\textrm{emb}[k]\cup \textrm{imm}[k,1]),$$
where we identify elements in $H_2(\textrm{imm}[k],\textrm{emb}[k]\cup \textrm{imm}[k,1])$ with
bordism classes of maps of oriented compact surfaces
$$(F,\partial F)\rightarrow (\textrm{imm}[k],\textrm{emb}[k]\cup \textrm{imm}[k,2]).$$

\vskip .1in

The proof of the next result is immediate. We omit the tedious details because we will not use this result in the rest of the article. 

\begin{proposition}
There exists the natural forgetful functor
$\mathcal{C}(M)\rightarrow \mathcal{B}(M)$. \hfill{\small{$\square$}}
\end{proposition}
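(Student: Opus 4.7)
The plan is to define the functor $\mathcal{F}: \mathcal{C}(M)\to \mathcal{B}(M)$ by setting it to be the identity on objects, and then to show that every transversal path (resp.\ transversal homotopy) representing a $1$-morphism (resp.\ $2$-morphism) in $\mathcal{C}$ is already a legitimate cycle representative of a $1$-morphism (resp.\ $2$-morphism) in $\mathcal{B}$, with both the isotopy relation on paths and the homotopy relation on $2$-morphisms forcing equality in the appropriate bordism/homology group.

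First I would treat $1$-morphisms. A transversal path $\gamma:I\to \rm{imm}[\it{k}]$ by definition lands in $\textrm{emb}[k]\cup \textrm{imm}[k,1]$ and sends the endpoints into $\textrm{emb}[k]$, so it is tautologically a map $(I,\partial I)\to (\textrm{emb}[k]\cup \textrm{imm}[k,1],\textrm{emb}[k])$ and defines a class $[\gamma]\in H_1(\textrm{emb}[k]\cup \textrm{imm}[k,1],\textrm{emb}[k])$ whose boundary in $H_0(\textrm{emb}[k])$ is $\textrm{targ}(\gamma)-\textrm{sour}(\gamma)$. The two flavors of isotopy that identify transversal paths in $\mathcal{C}$ both produce null-bordisms in $\mathcal{B}$: a transversal homotopy $I\times I\to \textrm{emb}[k]\cup \textrm{imm}[k,1]$ is literally a relative bordism of $1$-cycles, and a piecewise continuous isotopy $\chi:I\to \textrm{iso}(M)$ defines, via its adjoint $\chi':I\times I\to \textrm{Diff}(M)$, a homotopy $(t,t')\mapsto \chi'(t,t')\cdot \gamma(t)$ whose jumps occur only in the open stratum $\textrm{emb}[k]$ and therefore, after subdivision and smoothing, yield a relative bordism between $\gamma$ and $\gamma'$. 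Set $\mathcal{F}([\gamma]):=[\gamma]\in \textrm{hom}_1(\mathcal{B}[k])$.

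For $2$-morphisms I would perform the analogous argument. A transversal homotopy $h:I\times I\to \textrm{imm}[k]$ has codimension structure so that the preimage of $\textrm{imm}[k]\setminus \textrm{emb}[k]$ is a $1$-complex with vertices mapped to the codimension-$2$ stratum $\textrm{imm}[k,2]$, hence $h$ defines a quadruple of maps of $(I\times I;\partial_1, \partial_2, \partial_1\cap \partial_2)$ into $(\textrm{imm}[k],\textrm{emb}[k]\cup \textrm{imm}[k,1],\textrm{emb}[k])$ exactly of the shape required by $\mathcal{B}$. The three kinds of homotopy of $2$-morphisms declared in $\mathcal{C}$ (continuous homotopies of type (i), and the declaration that type (ii) isotopies are homotopic to the identity) all produce $3$-dimensional relative bordisms; the latter again via the adjoint-of-$\chi$ construction extended one dimension higher. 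So $\mathcal{F}$ is well-defined on $\textrm{mor}$.

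What remains is functoriality: the identity $1$-morphism of $K\in\mathcal{L}[k]$ is the class of a constant path, which is a boundary; concatenation of transversal paths corresponds to addition of relative $1$-chains modulo boundary, matching composition in $\mathcal{B}$; and the two compositions $\circ_1,\circ_2$ of surfaces correspond to horizontal and vertical gluing of the quadruples, which after smoothing corners is how composition is defined in $\mathcal{B}$. The hard part, and the reason the author elides the details, is verifying the compatibility of smoothing corners with the bordism relation across all three compositions simultaneously, together with checking that the categorical inverses of $\mathcal{C}$ (reverse paths, reflected homotopies) map to additive inverses in the homology groups defining $\mathcal{B}$; both are standard but combinatorially tedious.
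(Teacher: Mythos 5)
Your sketch is correct and is exactly the construction the paper has in mind: the paper gives no proof at all (it declares the result ``immediate'' and omits the ``tedious details''), and your identification of a transversal path with a relative $1$-cycle in $(\textrm{emb}[k]\cup\textrm{imm}[k,1],\textrm{emb}[k])$ and of a transversal homotopy with a bordism quadruple, together with the observation that both flavors of isotopy and the homotopies of $2$-morphisms produce relative bordisms (the $\textrm{iso}(M)$-actions staying in the strata because diffeomorphisms preserve them, and chains in $\textrm{emb}[k]$ dying in the relative groups), is precisely the elided verification. No gaps beyond the corner-smoothing bookkeeping you already flag as routine.
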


Obviously we
can use the images of the elementary category generators of $\mathcal{C}$ 
for the category $\mathcal{B}$.
There has to included the attaching of certain handles. We will
not describe this in detail at this point, see the next remark.

\begin{remark} The $2$-groupoid $\mathcal{B}$ is a \textit{linearization} of 
$\mathcal{C}$ in the following way: 
Let $\textrm{\textrm{hom}}[k]$ denote the set of $1$-morphisms in the
skein groupoid. Then $\textrm{hom}[k]\subset
\mathbb{Z}\mathcal{L}[k+1]$
using the obvious identification. 
This is e.\ g.\ contained in the proof of Proposition 4.1 in the next
section.
It also follows from Remark 3.2 (ii). 
The additional commutation relations are of course induced by suitable
maps of tori into $\textrm{imm}$.
We have $\textrm{hom}[k]\neq
\mathbb{Z}\mathcal{L}[k+1]$.
The composition of elements in $\textrm{hom}[k]$ corresponds to the usual addition
of homology classes. But it is only defined for classes 
$u,v\in H_1(\textrm{emb}[k]\cup \textrm{imm}[k,1])$, for which $\partial (u)=z-y$,
$\partial (v)=y-x$ such that
$\partial (u+v)=z-x$. Then $u\circ v$ is a morphims from $x$ to $z$
and is represented by the homology class $u+v$.
We will write $\textrm{hom}[k]=A(\mathcal{L}[k+1],\circ)\subset
\mathbb{Z}\mathcal{L}[k+1]$ to indicate that the $1$-morphisms are
the subset of the free abelian group corresponding to 
\textit{composable} morphisms of the $2$-groupoid.
Note that by the excision property of homology the explicit insertion
of loops in $emb$ is not necessary in the category $\mathcal{B}$.
\end{remark}

\vskip .1in

In the next sections we will study the structure
of the skein groupoid and its variations 
relevant for skein modules.
We will see that all the necessary facts needed 
are consequences of the exact homology
sequences of the pairs $(\textrm{imm}[k],\textrm{emb}[k])$.
More details about $\mathcal{B}$ will be given in a future article. 

\vskip .1in

In order to keep notation short we will consider the graded spaces
$\textrm{imm}=\cup_{k\geq 0}\textrm{imm}[k]$. We let 
$\mathcal{L}:=\cup_{k\geq 0}\mathcal{L}[k]$. 
We use the shift notation $\mathcal{L}[+1]$ and define
$\mathcal{L}[+1][k]:=\mathcal{L}[k+1]$. 

\vskip .1in

\begin{definition} Let $\mathcal{R}$ be a commutative ring with $1$.
A \textit{skein potential in $\mathcal{R}$} is a map
$$\sigma: \mathcal{L}[+1]\rightarrow \mathcal{R}\mathcal{L}.$$
Sometimes we will only consider 
$$\sigma: \mathcal{L}[1]\rightarrow \mathcal{L}[0]$$
and call this also a skein potential.
\end{definition}

\begin{lemma} Each skein potential defines a linear potential for the
$2$-categories $\mathcal{C}$ and $\mathcal{B}$.
\end{lemma}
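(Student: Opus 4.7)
The plan is to invoke Theorem 3.2 (and Remark 3.4 for $\mathcal{B}$), which describes $\textrm{hom}[k]$ as a free algebraic structure on $\mathcal{L}[k+1]$, and then to apply the universal property of the free construction to the restriction of $\sigma$ to $\mathcal{L}[k+1]$. The contravariance requirement collapses to ordinary additivity because $\mathcal{R}\mathfrak{ob}$ is regarded as an abelian group.

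For the Vassiliev $2$-groupoid $\mathcal{C}$: by Theorem 3.2, $\textrm{hom}[k]=F(\mathcal{L}[k+1],\circ)$, the free group on the elementary crossing-change morphisms $u_{K_*}: K_+\to K_-$ indexed by $K_*\in\mathcal{L}[k+1]$. Given a skein potential $\sigma$, I would set $\mathfrak{a}(u_{K_*}) := \pm\sigma(K_*)\in\mathcal{R}\mathcal{L}\subset\mathcal{R}\mathfrak{ob}$, the sign fixed once and for all so that the resulting relation $\partial(u_{K_*})=\mathfrak{a}(u_{K_*})$ recovers the desired skein relation (in section $2$ one has $\partial(u_{K_*})=K_--K_+$, to be compared with the intended $q^{-1}K_+-qK_-=\sigma(K_*)$, modulo the rescaling already built into Theorem 1.1). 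The universal property of $F(\mathcal{L}[k+1],\circ)$ extends this assignment uniquely to a group homomorphism into the abelian group $\mathcal{R}\mathfrak{ob}$, sending a word $u_1^{\epsilon_1}\cdots u_n^{\epsilon_n}$ to $\sum_i \epsilon_i\mathfrak{a}(u_i)$. Contravariance is then automatic: since the target is abelian, $\mathfrak{a}(v\circ u)=\mathfrak{a}(u)+\mathfrak{a}(v)=\mathfrak{a}(v)+\mathfrak{a}(u)$, which is exactly the additivity defining the free group homomorphism.

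For the homology Vassiliev $2$-groupoid $\mathcal{B}$: by Remark 3.4 we have $\textrm{hom}[k]=A(\mathcal{L}[k+1],\circ)\subset\mathbb{Z}\mathcal{L}[k+1]$, the subset of composable homology classes with composition given by addition. I would extend $\sigma|_{\mathcal{L}[k+1]}$ $\mathbb{Z}$-linearly to a homomorphism $\mathbb{Z}\mathcal{L}[k+1]\to\mathcal{R}\mathcal{L}[k]\subset\mathcal{R}\mathfrak{ob}$ and restrict to the composable subset $\textrm{hom}[k]$; functoriality reduces to linearity, and contravariance is again trivial in the abelian target. The natural forgetful functor $\mathcal{C}\to\mathcal{B}$ from Proposition 3.1 then provides a consistency check: the $\mathcal{B}$-potential is just the factorization of the $\mathcal{C}$-potential through the abelianization of the free group of crossing changes.

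The only real subtlety I anticipate is the bookkeeping of signs on inverse generators. Once $\mathfrak{a}(u_{K_*})$ is pinned down on positive generators, the abelian structure on $\mathcal{R}\mathfrak{ob}$ forces $\mathfrak{a}(u_{K_*}^{-1})=-\mathfrak{a}(u_{K_*})$, which must then be used consistently at every occurrence of the reversed crossing change; this is a matter of convention rather than of mathematical content. Crucially, no interaction with the $2$-morphism relations (differentiability, geometric $4T$, tangency reordering) is needed at this stage, because a linear potential is merely a functor on the $1$-morphism category; compatibility with the $2$-morphisms is exactly the insensitivity condition of Definition 2.3, imposed separately later.
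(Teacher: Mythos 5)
Your proposal is correct and follows essentially the same route as the paper: the paper's proof simply extends $\sigma$ to a homomorphism of abelian groups $\mathbb{Z}\mathcal{L}[+1]\rightarrow\mathcal{R}\mathcal{L}$ and uses that $\textrm{hom}\subset\mathbb{Z}\mathcal{L}[+1]$ with composition going to sum, which is exactly your argument for $\mathcal{B}$, while your free-group/universal-property argument for $\mathcal{C}$ is the same observation phrased via $\textrm{hom}[k]=F(\mathcal{L}[k+1],\circ)$ (note this is Theorem 3.1, not 3.2) with the abelian target making contravariance automatic. Your closing remark that compatibility with the $2$-morphisms is not required at this stage, being the separate insensitivity condition, is also consistent with the paper's definitions.
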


\begin{proof} This follows from the description of the $1$-morphisms
in $\mathcal{C}$ and $\mathcal{B}$ respectively. In fact, the skein potential extends to a 
homomorphism of abelian groups $\mathbb{Z}\mathcal{L}[+1]\rightarrow
\mathcal{R}\mathcal{L}$. But $\textrm{hom} \subset \mathbb{Z}\mathcal{L}[+1]$
and the inclusion maps composition into sum. 
\end{proof}

Note that a skein potential defined $\mathcal{L}[1]\rightarrow
\mathcal{R}\mathcal{L}[0]$ can easily be extended trivially to a full
skein potential. So usually we will not have to distinguish between
the two cases.

\vskip .1in

In sections 7 and 8 we will define two further modifications
of the $2$-groupoid 
$\mathcal{B}$.
This will explain the passage to Jones type skein relations and
skein relations for framed oriented links.

\vskip .1in
 
The skein groupoid above is the result of 
applying a homology functor to a $2$-groupoid defined by chain groups
in $\textrm{imm}$, see \cite{CS1} for a set-up of homology in this framework.
It should be pointed out that this preprint of Chas and Sullivan never was published.
The reason has been that transversality on the chain level could not be established.
Many later approaches to define string topology on the chain level but in the bordism setting. 
In the low dimensional range we need for our $2$-categorical setting bordism and homology coincide.
But a more general $n$-category setting could not work on the level of homology and instead use bordism or other modifications, see
\cite{Ch} and \cite{Me}.
The chain groups \textit{linearize} the deformation $2$-groupoid constructed 
at the beginning of this section. 
It is also interesting to study skein theory from this viewpoint..
Then skein modules are the $0$-dimensional homology modules of a chain
complex with $\mathcal{R}$-coefficients and suitably $\sigma $-deformed boundary 
operator. The author intens to study this in the future. 

\section{Link theory interpretation of homology exact sequences}

We consider the exact homology sequence of the pair 
$(\textrm{imm},\textrm{emb})$ (remember the grading convention from section 3):
\[
\begin{CD}
H_1(\text{emb})\rightarrow H_1(\text{imm})\rightarrow H_1(\text{imm},\text{emb})
\rightarrow H_0(\text{emb})\rightarrow H_0(\text{imm})
\end{CD}
\]

\noindent with $i_*: H_0(\rm{emb}\rightarrow H_0(\rm{imm})$ 
surjective by transversality. 

\vskip .1in

We want to describe the geometric 
meaning of the groups and homomorphisms of the above exact
sequence. 
Obviously
$$H_0(\textrm{emb})\cong \mathbb{Z}\mathcal{L},$$
Note that $H_0(\textrm{imm}[0])\cong H_0(\textrm{map})$, where $map$ is the space of all smooth
maps in $M$. This group is isomorphic to the free abelian group on the
set of monomials
in the set $\hat{\pi}$ of free homotopy classes of loops in $M$. So
$$H_0(\textrm{imm}[0])\cong S\mathbb{Z}\hat{\pi},$$
where $S$ denotes the symmetric algebra.
Using the above isomorphisms, the homomorphism $i_*$ corresponds
to the map $\mathfrak{h}$ defined by assigning to each oriented
singular link 
its homotopy class. 

The description of the relative homology group is more interesting.

\begin{proposition} 
There is a natural isomorphism
$$H_1(\rm{imm},\rm{emb})\cong \mathbb{Z}\mathcal{L}[+1]/\mathcal{D}$$
for subgroups $\mathcal{D}[k]\subset \mathcal{L}[k+1]$, $k\geq 0$. 
For $k=0$ the subgroup $\mathcal{D}[k]$ is generated by all elements
$$K_{*+}-K_{*-}-K_{+*}+K_{-*},$$
for all $K_{**}\in \mathcal{L}[k+2]$
(differentiability relations). 
For $k\geq 1$ the subgroup
$\mathcal{D}[k+1]$ additionally has generators corresponding to
all geometric $4T$-relations and tangency relations (Theorem 3.2,
(ii) and (iii)).
\end{proposition}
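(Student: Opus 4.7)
The plan is to exploit the stratification of $\textrm{imm}[k]$ by codimension of singularities, together with the transversality afforded by Lemma 3.2 (and its surface analogue noted in Remark 3.2(ii)). The open stratum is $\textrm{emb}[k]$, the codimension-$1$ stratum is $\textrm{imm}[k,1]$, which by Lemma 3.1 is naturally homeomorphic to $\textrm{emb}[k+1]$, and the codimension-$2$ stratum is $\textrm{imm}[k,2]$, whose three components (i), (ii), (iii) correspond exactly to the three sources of relations in the conclusion.

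First, I would construct the candidate isomorphism by defining a homomorphism $\varphi\colon \mathbb{Z}\mathcal{L}[k+1]\to H_1(\textrm{imm}[k],\textrm{emb}[k])$ that sends a generator $K_*\in \mathcal{L}[k+1]$, viewed as a path-component of $\textrm{imm}[k,1]\cong \textrm{emb}[k+1]$, to the class of a short transversal arc through $K_*$ going from $K_+$ to $K_-$. Surjectivity of $\varphi$ comes from transversality: any relative $1$-cycle $(\gamma,\partial\gamma)\to(\textrm{imm}[k],\textrm{emb}[k])$ can be approximated by a transversal path meeting $\textrm{imm}[k,1]$ in finitely many points labeled by elements of $\mathcal{L}[k+1]$, and its relative class is the signed sum of those labels. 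This step also uses that $H_0(\textrm{emb}[k])$ is the free abelian group on $\mathcal{L}[k]$, so $\partial\gamma$ reads off correctly.

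Next, I would identify $\ker(\varphi)$ by considering a transversal nullhomology $F\to \textrm{imm}[k]$ with $\partial F\subset \textrm{emb}[k]\cup \textrm{imm}[k,1]$. The preimage $F\cap\textrm{imm}[k,1]$ is then a graph on $F$ whose boundary on $\partial F$ realizes the given relation in $\mathbb{Z}\mathcal{L}[k+1]$, and whose interior vertices lie over $\textrm{imm}[k,2]$. Local analysis at each vertex yields one of three model relations:
\begin{enumerate}
\item[(i)] at a point of $\textrm{imm}[k,2]_\bullet$ (type $K_{**}\in \mathcal{L}[k+2]$) the graph has a $4$-valent vertex whose four local edges are labeled $K_{+*},K_{-*},K_{*+},K_{*-}$, producing $K_{*+}-K_{*-}-K_{+*}+K_{-*}$;
\item[(ii)] at a transverse triple point (which requires $k\geq 1$ since it consumes one distinguished double-point), the four local edges of the preimage give the geometric $4T$-relation of Theorem 3.2(ii), following the local models in \cite{BS},\cite{H},\cite{V1};
\item[(iii)] at a tangency (again needing $k\geq 1$), the preimage has a $2$-valent vertex whose two edges correspond to the two orderings of the double-points in the $2$-point resolution, giving the tangency relation of Theorem 3.2(iii).
\end{enumerate}
Conversely, each such relation is realized by a small disk in $\textrm{imm}[k]$ centered at a point of the corresponding component of $\textrm{imm}[k,2]$, so these generate $\ker(\varphi)$.

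The main obstacle is the careful local analysis at the codimension-$2$ strata, particularly verifying that the graph-theoretic combinatorics on $F$ (with $1$-, $2$-, and $4$-valent vertices as in Lemma 3.2) yields \emph{exactly} the relations stated and with the correct signs. The signs at (i) drop out of the two commuting boundary operations for the two double-points; the $4T$-relation at a triple point requires identifying, via the distinguished branch, the two pairs $K_\pm^{1,2}$ of Theorem 3.2(ii) with the four edges of the preimage graph; and the tangency case requires that the two local resolutions of a tangency only differ in the ordering of the two new double-points. Once these local models are pinned down, the identification $H_1(\textrm{imm}[k],\textrm{emb}[k])\cong \mathbb{Z}\mathcal{L}[k+1]/\mathcal{D}[k]$ is naturally induced and independent of the transversal representatives chosen.
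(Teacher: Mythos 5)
Your proposal is correct and follows essentially the same route as the paper's proof: transversality pushes relative $1$-cycles into $\textrm{emb}[k]\cup\textrm{imm}[k,1]$ so that classes are read off as signed sums over $\mathcal{L}[k+1]$, and transversal surfaces reduce relative boundaries to monodromies around the three components of $\textrm{imm}[k,2]$, yielding exactly the abelianized relations of Theorem 3.2. The only (cosmetic) difference is that you build the map from $\mathbb{Z}\mathcal{L}[k+1]$ into $H_1(\textrm{imm}[k],\textrm{emb}[k])$ and compute its kernel, whereas the paper works in the opposite direction.
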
 

\begin{proof}
It follows from Lemma 3.2 that for $k\geq 0$ the homomorphism
$$H_1(\textrm{imm}[k,1]\cup \textrm{emb}[k],\textrm{emb}[k])\rightarrow H_1(\textrm{imm}[k],\textrm{emb}[k])$$
induced by the inclusion 
$$\textrm{imm}[k,1]\cup \textrm{emb}[k] \subset \textrm{imm}[k]$$  
is surjective.
Thus we can represent each homology class by a chain, which
maps into $\textrm{imm}[k,1]\cup \textrm{emb}[k]$ with the boundary mapping into 
$\textrm{emb}[k]$. It can be assumed that the mappings of $1$-simplices are
transverse in the sense of section 3.
Thus all parameters values of the intervals map into $\textrm{emb}[k]$, except
for a finite number mapping into $\textrm{imm}[k,1]$. The orientation of the
$1$-simplex and the usual coorientation of $\textrm{imm}[k,1]$ in $\textrm{imm}[k]$ define
a sign for each \textit{singular} parameter. This defines an integral
linear combination of elements of $\textrm{imm}[k,1]$ and thus an element in
$H_0(\textrm{imm}[k,1])$.
Next consider a relative boundary. This is given by a $2$-chain with 
boundary mapping into $\textrm{emb}[k]$. Now we apply 
Lemma 3.1, see also Remark 3.2(ii). Thus we can perturb each mapping 
of an oriented surface
$F$ into $\textrm{imm}[k]$ relative to the boundary such the set of parameters in $F$  
mapping into $\textrm{imm}[k] \setminus \textrm{emb}[k]$ is a $1$-complex, which is properly
embedded in $F$ with vertices of valence $4$ or $2$ in the interior
mapping into $\textrm{imm}[k,2]$. The contribution of a boundary element thus is
a sum of \textit{monodromies} around elements of $\textrm{imm}[k,2]$. These
elements generate the subgroup $\mathcal{D}[k+2]$ and are computed by
abelianizing the relations described in Theorem 3.2. This proves the claim. 
\end{proof}    

We summarize the discussion in the following theorem.

\begin{theorem}
The homology exact sequence of the pair $(\rm{imm},\rm{emb})$ can be 
identified with the following exact sequence:
$$
H_1(\rm{emb})\rightarrow \it{H}_1(\rm{imm})\buildrel \mu
\over \longrightarrow \mathbb{Z}\mathcal{L}[+1]
/\mathcal{D} \buildrel \partial \over \longrightarrow
\mathbb{Z}\mathcal{L}\buildrel \mathfrak{h} \over \longrightarrow
\mathbb{Z}\mathfrak{b}\rightarrow 0
$$
The homomorphism $\mu [k]$ is defined by the signed sum of all
terms in $\mathcal{L}[k+1]$ along transversal paths in 
$imm[k,1]\cup emb[k]$.  
The homomorphism $\partial [k]$ is the Vassiliev resolution
of the last double-point 
$$\mathcal{L}[k+1]\rightarrow \mathbb{Z}\mathcal{L}[k].$$
\end{theorem}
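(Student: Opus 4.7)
The plan is to assemble the identifications already made in the paper and verify that the connecting maps of the long exact sequence translate into the combinatorial operations $\mathfrak{h}$, $\partial$, and $\mu$. The four terms on the right are just a relabeling, and only the two middle arrows require genuine verification.

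First I would collect the identifications already in hand. We have $H_0(\textrm{emb})\cong \mathbb{Z}\mathcal{L}$ because the path components of $\textrm{emb}[k]$ are exactly the isotopy classes of $k$-embeddings, and $H_0(\textrm{imm})\cong \mathbb{Z}\mathfrak{b}$ since components of $\textrm{imm}[k]$ are homotopy classes of singular $k$-links. Proposition 4.1 gives $H_1(\textrm{imm},\textrm{emb})\cong \mathbb{Z}\mathcal{L}[+1]/\mathcal{D}$. Under these identifications, the inclusion-induced map $i_*: H_0(\textrm{emb})\to H_0(\textrm{imm})$ sends the class of a $k$-embedding to its homotopy class, i.e.\ it is $\mathfrak{h}$; surjectivity of $\mathfrak{h}$ is immediate since every homotopy class of singular link is represented by a $k$-embedding (transversality from Lemma 3.2), and exactness of the original sequence at $\mathbb{Z}\mathcal{L}$ becomes exactness at $\mathbb{Z}\mathcal{L}$ here.

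Next I would identify the connecting homomorphism $\partial$. A class in $H_1(\textrm{imm}[k],\textrm{emb}[k])$ is represented, by Proposition 4.1's proof, by a finite sum of transversal paths in $\textrm{emb}[k]\cup \textrm{imm}[k,1]$, whose boundary in $H_0(\textrm{emb}[k])$ is computed by evaluating at endpoints. For the elementary generator associated with $K_*\in \mathcal{L}[k+1]$, chosen as a short transversal path through the stratum $\textrm{imm}[k,1]$ at $K_*$, the signed endpoint difference is exactly $K_+-K_-$. Linearity then shows $\partial [k]$ is the Vassiliev resolution $K_*\mapsto K_+-K_-$, and this descends to the quotient by $\mathcal{D}$ because the differentiability, geometric $4T$, and tangency relations become trivial already in $\mathbb{Z}\mathcal{L}[k]$ (each of them is an antisymmetric sum of four resolutions that cancel in pairs).

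Then I would describe $\mu: H_1(\textrm{imm})\to \mathbb{Z}\mathcal{L}[+1]/\mathcal{D}$. Given a $1$-cycle $c$ in $\textrm{imm}[k]$, use Lemma 3.2 (with its boundary-relative version in Remark 3.2(ii)) to homotope $c$ into $\textrm{emb}[k]\cup \textrm{imm}[k,1]$, transversal to the codimension-$1$ stratum. The signed count of intersections with $\textrm{imm}[k,1]$ gives an element of $\mathbb{Z}\mathcal{L}[+1]$, whose class modulo $\mathcal{D}$ is independent of the homotopy: two transversal representatives cobound a map of a surface, which by Proposition 4.1 produces relations exactly of type $\mathcal{D}$. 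Commutativity $\partial \circ \mu = 0$ holds because a cycle has empty boundary. This is the natural map induced by the map of pairs $(\textrm{imm},\emptyset)\to (\textrm{imm},\textrm{emb})$.

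Finally, exactness at $H_1(\textrm{imm})$ and at $\mathbb{Z}\mathcal{L}[+1]/\mathcal{D}$ comes directly from exactness of the original long exact sequence transported along these isomorphisms. The one place I expect friction is in verifying that the signed-count definition of $\mu$ is well-defined modulo $\mathcal{D}$ and agrees with the algebraic connecting homomorphism; this requires carefully invoking the transversality lemma in the form that allows perturbation of surfaces rel boundary, and then matching the contributions of $\textrm{imm}[k,2]$-strata with the generators of $\mathcal{D}[k+2]$ as enumerated in Theorem 3.2, so the bookkeeping already done in the proof of Proposition 4.1 is the real technical heart of the statement.
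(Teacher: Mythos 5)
Your proposal is correct and follows essentially the same route as the paper, which proves the theorem simply by assembling Proposition 4.1 and its proof: the identifications $H_0(\mathrm{emb})\cong\mathbb{Z}\mathcal{L}$, $H_0(\mathrm{imm})\cong\mathbb{Z}\mathfrak{b}$, $H_1(\mathrm{imm},\mathrm{emb})\cong\mathbb{Z}\mathcal{L}[+1]/\mathcal{D}$, and the transversal-path description of relative $1$-classes, from which the connecting map becomes the Vassiliev resolution and $\mu$ the signed count of crossings of the codimension-one stratum. Your added checks (that $\partial$ kills the generators of $\mathcal{D}$, and that $\mu$ is well defined via the rel-boundary transversality of surfaces) are exactly the bookkeeping the paper delegates to the proof of Proposition 4.1.
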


\begin{proof} The result follows easily from proposition 4.1
and its proof.
\end{proof} 

The above sequence provides a description of the kernel of the
$\mathfrak{h}$. This is the forgetful homomorphism from isotopy to homotopy,
mapping \textit{quantum
obserbales} to their semi-classical limits. 
  
\begin{corollary} There is the isomorphism 
$$
\mathbb{Z}\mathcal{L}\cong \mathbb{Z}\mathfrak{b}\oplus
coker(\mu ).
$$ 
\end{corollary}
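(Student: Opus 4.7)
The plan is to extract the relevant short exact sequence from the five-term exact sequence of Theorem 4.1 and then split it using the fact that $\mathbb{Z}\mathfrak{b}$ is free abelian. Specifically, truncating the sequence of Theorem 4.1 on the left at $\mu$ gives
\[
\mathbb{Z}\mathcal{L}[+1]/\mathcal{D} \xrightarrow{\;\partial\;} \mathbb{Z}\mathcal{L} \xrightarrow{\;\mathfrak{h}\;} \mathbb{Z}\mathfrak{b} \to 0,
\]
which is exact at $\mathbb{Z}\mathcal{L}$ and at $\mathbb{Z}\mathfrak{b}$. By exactness at $\mathbb{Z}\mathcal{L}[+1]/\mathcal{D}$ in the original sequence, $\ker(\partial)=\mathrm{im}(\mu)$, so the first isomorphism theorem yields $\mathrm{im}(\partial)\cong (\mathbb{Z}\mathcal{L}[+1]/\mathcal{D})/\mathrm{im}(\mu)=\mathrm{coker}(\mu)$.

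Next I would rewrite the truncated sequence as a short exact sequence
\[
0 \to \mathrm{coker}(\mu) \to \mathbb{Z}\mathcal{L} \xrightarrow{\;\mathfrak{h}\;} \mathbb{Z}\mathfrak{b} \to 0,
\]
where the left arrow is the injection induced by $\partial$ (its image is precisely $\ker(\mathfrak{h})$ by exactness of the original sequence at $\mathbb{Z}\mathcal{L}$).

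Finally, $\mathbb{Z}\mathfrak{b}$ is by construction the free abelian group on the set $\mathfrak{b}=\bigcup_{k\ge 0}\mathfrak{b}[k]$ of homotopy classes of singular links, hence projective as a $\mathbb{Z}$-module. Therefore the above short exact sequence splits, and one obtains the direct sum decomposition
\[
\mathbb{Z}\mathcal{L} \cong \mathbb{Z}\mathfrak{b} \oplus \mathrm{coker}(\mu),
\]
as asserted. A concrete splitting is given by any choice of geometric models $\mathfrak{s}\colon \mathfrak{b}\to \mathcal{L}$ as introduced in Section~1, since $\mathfrak{h}\circ \mathfrak{s}=\mathrm{id}_{\mathfrak{b}}$ by definition.

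There is no real obstacle here; the argument is purely homological, and the only thing to check carefully is that the identifications in Theorem 4.1 are functorial enough that "exactness of the homology sequence of $(\mathrm{imm},\mathrm{emb})$" really does deliver the two needed equalities $\ker(\mathfrak{h})=\mathrm{im}(\partial)$ and $\ker(\partial)=\mathrm{im}(\mu)$ in the stated form, which has already been done in the proof of Theorem 4.1.
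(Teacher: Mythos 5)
Your proof is correct and follows essentially the same route as the paper, which treats the corollary as an immediate consequence of the exact sequence in Theorem 4.1: identify $\ker(\mathfrak{h})=\mathrm{im}(\partial)\cong\mathrm{coker}(\mu)$ and split the resulting short exact sequence, the splitting being realized exactly by the geometric model maps $\mathfrak{s}$ (equivalently by freeness of $\mathbb{Z}\mathfrak{b}$), which the paper invokes right after the corollary.
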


The result of the corollary is just a different description of
the $2$-groupoid structure discussed in section 3.
It shows in a neater way the distinction in terms of \textit{local}
relations and string topology homomorphism.

\vskip .1in

Recall that we 
have special \textit{geometric} splitting homomorphisms
$$\mathfrak{s}: \mathbb{Z}\mathfrak{b}\rightarrow 
\mathbb{Z}\mathcal{L}.$$
These are defined by realizing chord diagrams in $M$ by  
corresponding immersions.
Then for $k=0$, a sequence of free homotopy classes is mapped to a link
with components realizing those free homotopy classes.
Recall that $\mathfrak{b}[0]$
is the set of monomials in the set $\hat{\pi }$, and
$$\mathfrak{s}[0]: \mathfrak{b}\rightarrow \mathcal{L}[0].$$
Let
$\mathfrak{s}[0](\alpha )=:K_{\alpha }\in \mathcal{L}[0]$
be the standard link
corresponding to the monomial $\alpha \in \mathfrak{b}$.

\vskip .1in

In the following let $\sigma $ denote either a skein potential or the
Vassiliev potential $\sigma_V$.

\vskip .1in

\begin{proof} \textbf{[of Theorem 1.1 and Theorem 1.5]} (in the Conway boundary case)
The claims follow 
from Theorems 2.1, 2.2, 2.3 and 3.1 
applied to the categories $\mathcal{B}[0]$ respectively
$\mathcal{B}$ (or $\mathcal{C}[0]$ respectively $\mathcal{C}$), see the
discussion at the end of section 3. The models are
$\mathfrak{b}[0]$ respectively $\mathfrak{b}$.
The exact sequence of Theorem 4.1 (and the splitting in Corollary 4.1) 
describe in a systematic way \textit{all}
homology classes of $1$-cycles with boundary 
$K-\mathfrak{s}\circ \mathfrak{h}(K)$ for $K\in \mathcal{L}$.
The additional information we have here is that the relations coming from
expansion of \textit{closed} $1$-morphism up to $2$-morphisms, \textit{factor}
through the homomorphism $\mu $. 
In the inductive argument we actually have to consider the lift of
$\widetilde{\sigma} \circ \mu$ to $R\mathcal{L}$. 
Here we have defined:
$$\widetilde{\sigma}: \mathbb{Z}\mathcal{L}[+1]/\mathcal{D}\rightarrow
\mathcal{R}\mathcal{L}/\langle \sigma (\mathcal{D})\rangle,$$
where $\langle \sigma (\mathcal{D})\rangle $ is the submodule
generated by the subgroup $\sigma (\mathcal{D})$.
\end{proof}

\begin{remark} 
Let $\iota :\mathcal{R}\mathcal{L}[0]\rightarrow
\mathcal{S}(\sigma )$ be the projection. 
Then $\iota \circ \widetilde{\sigma} \circ \mu =0$. 
Here we use that $\iota $ factors through $\mathcal{R}/\langle
\mathcal{D} \rangle$ and in $\mathcal{S}(\sigma )$ the following
relation holds:
$$\sigma (K_{*+}-K_{*-}-K_{+*}-K_{-*})=
\partial (K_{*+}-K_{*-}-K_{+*}-K_{-*})=
K_{++}-K_{-+}....$$  
But for $K_+\neq K_-$
a skein relation of the form $K_+-K_--\sigma (K_*)$ is not contained
in the image of $\widetilde{\sigma }\circ \mu $, even if we consider
the extended homomorphism 
$$\widetilde{\sigma}\circ \mu : 
H_1(\textrm{imm})\otimes \mathcal{R}\rightarrow \mathcal{R}\mathcal{L}/\langle
\sigma (\mathcal{D})\rangle.$$
In fact, there is a more natural way of associating a string topology
homomorphism to a given skein relation or Vassiliev relation. This
will be discussed in section 10.
\end{remark}  

\section{Some general results about skein potentials and skein modules}

We work in the graded $2$-groupoid $\mathcal{B}$ 
or in the $2$-groupoid $\mathcal{B}[0]$. Then
$\rm{hom}[k]$ is naturally identified with $\mathbb{Z}\mathcal{L}[k+1]$.
Because of the functoriality property in the definition, a skein
potential is determined by its values on the generators $\mathcal{L}[k+1]$.
Thus we consider a skein potential as a map
$\mathcal{L}[+1]\rightarrow \mathcal{L}$ using the usual graded
notation.

\vskip .1in

For some of the following results
we have to restrict the choice of geometric models.
Let
$$\sigma : \mathcal{L}[1]\rightarrow \mathcal{R}\mathcal{L}[0]$$
be a skein potential with respect to the ideal $I\subset \mathcal{R}$.
Let $K_{\alpha }:=\mathfrak{s}(\alpha )$ be the \textit{standard link}
with homotopy class $\alpha $.

\begin{definition}
A choice of geometric models 
$$\mathfrak{s}: \mathfrak{b}[0]\rightarrow \mathcal{L}[0]$$
is called \textit{nice} if the
following two conditions hold: (i) for free homotopy classes which are
multiple times contained in a monomial $\alpha \in \mathfrak{b}[0]$
there exists a self-isotopy of the standard link $K_{\alpha }$,
which arbitrarily changes the order of the corresponding components.
(ii) for each trivial free homotopy class
in $\alpha $, the standard link $K_{\alpha }$
contains some unlinked and unknotted circles
in a $3$-ball separated from the rest of the link.
\end{definition} 

\begin{lemma}
It is always possible to choose nice geometric models.
\end{lemma}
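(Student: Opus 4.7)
The plan is to construct $\mathfrak{s}$ by fixing once and for all a representative for each nontrivial free homotopy class and then assembling the link for an arbitrary monomial in a symmetric, local way. First, enumerate the set $\hat{\pi}^0$ of nontrivial free homotopy classes of loops in $M$. For each $\gamma \in \hat{\pi}^0$, inductively choose a smooth embedded representative $e_{\gamma}: S^1 \rightarrow M$ in general position with the finitely many previously chosen curves; since $\dim M = 3$ and $1+1 < 3$, this can be arranged so that the collection $\{e_{\gamma}(S^1)\}$ is pairwise disjoint. Shrink to pairwise disjoint tubular neighborhoods $N_{\gamma} \cong S^1 \times D^2$ of the $e_{\gamma}(S^1)$, and fix a $3$-ball $B \subset M$ disjoint from $\bigcup_{\gamma} N_{\gamma}$; such a $B$ exists because $M$ is connected.

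Next, for a monomial $\alpha = \gamma_1^{k_1} \cdots \gamma_n^{k_n} \cdot 1^{k_0} \in \mathfrak{b}[0]$, with distinct nontrivial $\gamma_i \in \hat{\pi}^0$ and $1$ denoting the trivial free homotopy class, define $K_{\alpha} = \mathfrak{s}(\alpha)$ by placing $k_i$ parallel longitudes $S^1 \times \{p_{i,1}\}, \ldots, S^1 \times \{p_{i,k_i}\}$ inside $N_{\gamma_i}$ for each $i = 1,\dots,n$, together with $k_0$ pairwise unlinked, unknotted small circles in the ball $B$. Condition (ii) of the definition of \emph{nice} then holds immediately by construction, since the trivial components sit in a separated $3$-ball.

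To verify condition (i), it suffices to realize any permutation of the $k_i$ longitudes inside $N_{\gamma_i}$ by an ambient isotopy supported in $N_{\gamma_i}$ (and analogously in $B$ for the unknotted components); extension by the identity outside then yields the required self-isotopy of $K_{\alpha}$. Using $N_{\gamma_i} \cong S^1 \times D^2$, this reduces to the fact that the ordered configuration space of $k_i$ distinct interior points of $D^2$ is path-connected, so there is an isotopy of $D^2$ from the identity to a diffeomorphism realizing any prescribed permutation of $p_{i,1}, \ldots, p_{i,k_i}$; crossing with the identity on the $S^1$ factor produces an ambient isotopy of $N_{\gamma_i}$ that permutes the longitudes as desired. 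The same argument in the $3$-ball $B$ handles the trivial class factors in $\alpha$.

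The only real subtlety is the initial countable general-position step: one must ensure that the pairwise disjointness can be maintained over all of $\hat{\pi}^0$ simultaneously. This is not a genuine obstacle, since at each stage of the inductive construction only finitely many previously chosen curves (and their tubular neighborhoods) must be avoided, and a generic small perturbation of a smooth embedded loop in a $3$-manifold suffices to achieve disjointness from a prescribed finite disjoint union of embedded loops.
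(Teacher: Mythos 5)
Your construction is correct and follows essentially the same route as the paper's (very terse) proof: represent a class appearing with multiplicity by parallel copies in a tubular neighborhood, which gives the order-permuting self-isotopy, and put the trivial classes as unknotted, unlinked circles in a separated $3$-ball; you merely spell out the permutation isotopy via the configuration space of points in $D^2$. The one slip is the claim that a single ball $B$ disjoint from $\bigcup_{\gamma}N_{\gamma}$ exists "because $M$ is connected" — if $\hat{\pi}^0$ is infinite this union could be dense — but this is harmless: either fix $B$ first and choose all representatives in $M\setminus B$ (possible since the inclusion induces an isomorphism on free homotopy classes), or observe that any given monomial involves only finitely many classes, so a ball avoiding just those finitely many neighborhoods suffices.
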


\begin{proof}
If $\alpha $ contains a free homotopy class $\beta \in \hat{\pi}$
with multiplicity $n_{\beta }$ then we can choose $n_{\beta }$
parallel copies in the link $K_{\alpha}$. Of course trivial homotopy
classes can be represented by unlinked and unknotted components 
separated from the link.
\end{proof}

\begin{remark} Let $M$ be the solid torus $S^1\times D^2$, 
or a Lens space $L(p,q)$.
Then $L(p,q)$ contains a solid torus and each link is isotopic into
this torus.
Then we can choose nice standard links contained in $S^1\times D^2$, which
are descending. In fact, there is an isotopy of the solid torus which
changes the order of components with multiple free homotopy classes of
components.  
\end{remark}
 
\begin{definition} A skein potential 
$\sigma : \mathcal{L}[+1]\rightarrow \mathcal{R}\mathcal{L}$
is called
\textit{local} if it is of the form
$$\sigma (K_*)=\sum_{i=1}^n{h_i}K_i,$$
where for $1\leq i\leq n$, $a_i\in \mathcal{R}$ and $K_i$ are singular
links defined by
replacing the two intersecting arcs in the oriented $3$-ball centered 
about the last 
double-point $*$ by the $2$-tangles $t_i$.
We assume that $a_i\in I$ if the number of components of $K_i$ is not
smaller than the number of components of $K_*$.
\end{definition}

The following is obvious.

\begin{lemma} Each local skein potential 
$$\sigma : \mathcal{L}[1]\rightarrow \mathcal{L}[0]$$ 
extends to a local skein potential
$$\mathcal{L}[+1]\rightarrow \mathcal{L}.$$
In fact for each $1\leq i\leq k+1$ there exist maps
$$\sigma_i : \mathcal{L}[k+1]\rightarrow \mathcal{L}[k]$$
defined by applying $\sigma $ to the $i$-th double-point
of a singular link. \hfill{$\square$}
\end{lemma}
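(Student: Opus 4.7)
The plan is to unpack what ``local'' means and then define $\sigma_i$ by performing the same tangle replacement near the $i$-th double-point, the other $k$ double-points being simply carried along. Given $K_*\in \mathcal{L}[k+1]$, label its double-points $p_1,\ldots,p_{k+1}$ in the chosen order. Because a singular link has only finitely many double-points and no tangencies, I can choose a small oriented $3$-ball $B_i\subset M$ containing $p_i$ in its interior, disjoint from the other $p_j$, and meeting $K_*$ in exactly the two arcs through $p_i$ — i.e.\ in the standard crossing $2$-tangle. This is possible by shrinking a Euclidean neighborhood of $p_i$ in a chart in which $K_*$ looks like two transverse straight segments. The pair $(B_i,K_*\cap B_i)$ is then diffeomorphic (preserving orientations) to the $2$-tangle used in Definition 5.2 to define $\sigma$.

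Using this identification, replace $K_*\cap B_i$ by each of the $2$-tangles $t_j$ appearing in $\sigma(K_*) = \sum_j a_j K_j$, and weight the result by $a_j$. The other $k$ double-points $p_1,\ldots,\widehat{p_i},\ldots,p_{k+1}$ lie outside $B_i$ and are untouched, so each resulting term belongs to $\mathcal{L}[k]$, with the remaining ordered double-points giving it the structure of a $k$-singular link. Define
$$\sigma_i(K_*) := \sum_j a_j \bigl(K_*\text{ with }B_i\text{-tangle replaced by }t_j\bigr)\in \mathcal{R}\mathcal{L}[k].$$

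I must check two well-definedness issues. First, $\sigma_i(K_*)$ depends only on the isotopy class of $K_*$, not on the choice of $B_i$: any two such $3$-balls are related by an ambient isotopy supported away from the other $p_j$, taking one standard-tangle picture to the other, and this isotopy carries the whole family of substitutions to each other. Second, the locality condition on coefficients transfers: whether $K_j$ has at least as many components as $K_*$ depends only on how the tangle $t_j$ connects the four boundary points of $B_i$ relative to the original crossing, and this is exactly the same combinatorial datum as in the $k=0$ case, so the constraint $a_j\in I$ imposed in Definition 5.2 implies the analogous constraint for the extension. For $i=k+1$ (the distinguished last double-point) this construction agrees with the original $\sigma$ on $\mathcal{L}[1]$ when $k=0$. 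Packaging the $\sigma_i$ together with the grading convention $\mathcal{L}[+1][k]=\mathcal{L}[k+1]$ yields the desired extension $\sigma:\mathcal{L}[+1]\to \mathcal{L}$.

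The only mildly delicate step is the component-count bookkeeping in the second well-definedness check; everything else is formal consequence of locality and is indeed, as the author suggests, essentially by inspection.
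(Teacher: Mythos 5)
Your proposal is correct and is exactly the argument the paper has in mind: the paper states this lemma with no proof at all ("The following is obvious"), and the intended justification is precisely your local tangle replacement at the $i$-th double-point, with the other double-points carried along unchanged. Your two well-definedness checks (independence of the ball $B_i$ up to isotopy, and transfer of the component-count coefficient condition, which depends only on whether the two strands at $p_i$ lie on the same component together with how $t_j$ reconnects the tangle endpoints) are the right points to verify and fill in what the paper leaves implicit.
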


Similarly we can define Jones and Conway boundaries on 
$\mathcal{L}[k+1]$ for $1\leq i\leq k+1$ 
by applying $\partial$ to the $i$-th double point.
Note that for $i<j$

$$\partial_i\circ \sigma_j= \sigma_{j-1}\circ \partial_i.$$

\vskip .1in

Recall that a skein relation is insensitive (with respect to 
differentiability relations for $k\geq 1$ if for each 
differentiability element $d\in \mathcal{L}[k+1]$ we can \textit{choose} 
a preimage $\delta^{-1}(\sigma (d))$ such that 
$\sigma (\delta^{-1}(\sigma (d))=0$.

\begin{remark}
Global insensitivity for the skein potentials of the graded category
is rare because of the $4T$-relations. In fact, 
the usual Vassiliev potential $\sigma_V$ is \textit{not} robust.
Also the extensions of skein relations 
according to Lemma 5.1 are not robust. 
In particular the following result only applies to the usual skein
potentials. 
\end{remark}

\begin{proposition} Each local skein potential 
$\mathcal{L}[1]\rightarrow \mathcal{R}\mathcal{L}[0]$ is robust.
Also, each local skein potential
$\mathcal{L}[+1]\rightarrow \mathcal{R}\mathcal{L}$ is robust
with respect to the differentiablity morphisms.
\end{proposition}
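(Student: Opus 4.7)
The plan is to verify the two conditions of insensitivity directly using the explicit form of a local skein potential and the commutation relations between resolutions at different double-points from Lemma 5.1. Recall that for robustness with respect to differentiability relations it suffices to show, for every generating differentiability element $u$ arising from some $K_{**}\in\mathcal{L}[k+2]$, that (i) $\rho_0(\sigma(u))=0$ and (ii) there exists $w\in R\mathcal{L}$ with $\partial(w)=\sigma(u)$ and $\sigma(w)=0$. For the case $k=0$, the $2$-groupoid $\mathcal{C}[0]$ has no $4T$- or tangency-relations (cf.\ Theorem 3.2), so robustness with respect to differentiability is already full insensitivity.

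First I would rewrite the differentiability element in linearized form as
$u=K_{+*}+K_{*-}-K_{-*}-K_{*+}\in \mathbb{Z}\mathcal{L}[k+1]$,
as produced by the abelianization of the loop $K_{-*}^{-1}K_{*+}^{-1}K_{+*}K_{*-}\to 1$ of Theorem 3.2(i). Because $\sigma$ is local, I can introduce $\sigma_1,\sigma_2:\mathcal{L}[k+2]\to\mathcal{L}[k+1]$ (Lemma 5.1), the substitutions at the two distinguished crossings $c_1,c_2$. Writing $\sigma(K_*)=\sum_\alpha h_\alpha K_{t_\alpha}$ for the local tangle replacements, unwinding the four summands of $\sigma(u)$ gives
\[
\sigma(u)=\sum_\alpha h_\alpha\bigl[(K_{+,t_\alpha}-K_{-,t_\alpha})-(K_{t_\alpha,+}-K_{t_\alpha,-})\bigr].
\]
Each inner parenthesis is a Vassiliev resolution $\partial$ of a singular link with one remaining double-point, so this is exactly $\partial\bigl(\sigma_2(K_{**})-\sigma_1(K_{**})\bigr)$. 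Hence the natural candidate is
\[
w:=\sigma_2(K_{**})-\sigma_1(K_{**}),
\]
and $\partial(w)=\sigma(u)$ by construction.

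The main technical point is then $\sigma(w)=0$. Applying $\sigma$ once more, again at the last remaining double-point, to each term of $w$ yields the double sums
$\sigma(\sigma_2(K_{**}))=\sum_{\alpha,\beta}h_\alpha h_\beta K_{t_\beta,t_\alpha}$ and
$\sigma(\sigma_1(K_{**}))=\sum_{\alpha,\beta}h_\alpha h_\beta K_{t_\alpha,t_\beta}$.
Since the two $3$-balls used for the local substitutions at $c_1$ and $c_2$ are disjoint, the resulting singular links are literally equal after the obvious relabeling $(\alpha,\beta)\leftrightarrow(\beta,\alpha)$, so the difference vanishes. This step is where I expect the bookkeeping to be most delicate: one must make sure that the two local replacements can be performed simultaneously (so that $c_1$ and $c_2$ remain in disjoint $3$-balls and the two substitutions truly commute), which is automatic once one passes to representatives with small enough $3$-ball neighborhoods but is the key geometric content of \emph{locality}.

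Finally, condition (i) is automatic: since $w\in R\mathcal{L}[+1]$ and $\sigma(u)=\partial(w)$, and the Vassiliev resolution $\partial(K_*)=K_+-K_-$ preserves the free homotopy class of each component (a crossing change occurs within a $3$-ball, and $K_+,K_-,K_*$ all share the homotopy class of $K_*$), the composition $\mathfrak{h}\circ\partial=0$. Therefore $\rho_0(\sigma(u))=\mathfrak{h}(\partial(w))=0$. This completes robustness with respect to differentiability, proving both assertions; the first assertion is just this argument in the case $k=0$, where there are no further $2$-morphism types to worry about.
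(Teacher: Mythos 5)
Your proposal is correct and follows essentially the same route as the paper: you choose the preimage $w=\sigma_2(K_{**})-\sigma_1(K_{**})$, which is exactly the paper's choice $\sigma_{k+1}(y)-\sigma_{k+2}(y)$, and your explicit tangle-sum computations are just unwound versions of the paper's locality identities $\sigma\circ\partial_{k+2}=\partial\circ\sigma_{k+1}$ and $\sigma\circ\sigma_{k+1}=\sigma\circ\sigma_{k+2}$. The only difference is that you also verify the homotopy-class condition $\rho_0(\sigma(u))=0$ explicitly (via $\mathfrak{h}\circ\partial=0$), which the paper leaves implicit.
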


\begin{proof}
Each differentiability element $d$ is in the image of 
$$\partial_{k+2}-\partial_{k+1}: 
\mathcal{L}[k+2]\rightarrow \mathbb{Z}\mathcal{L}[k],$$
which maps $K_{**}\in \mathcal{L}[k+2]$ to
$$K_{*+}-K_{*-}-K_{+*}-K_{-*}.$$
We have to show that for each $y\in \mathcal{L}[k+2]$ we can
choose a preimage, denoted $\partial ^{-1}(y)$, such that
$$\sigma \circ \partial^{-1} \circ \sigma \circ \partial_{k+2}(y)=
\sigma \circ \partial^{-1}\circ \sigma \circ \partial_{k+1}(y).$$
Recall that $\sigma $ without index operates on the last double-point.
Of course we can choose the preimage on the left hand side of the
equation such that $\partial^{-1} \circ \sigma
\circ \partial_{k+2}(y)=\sigma_{k+1}(y)$ and similarly on the right hand
side $\partial^{-1}\circ \sigma \circ \partial_{k+1}(y)=\sigma (y)$
This follows from locality, e.\ g.\ in the first case: 
$\sigma \circ \partial_{k+2}=
\partial \circ \sigma_{k+1}$.
Then the claim follows using locality: 
$$\sigma \circ \sigma_{k+1}=\sigma \circ \sigma_{k+2}.$$
\end{proof}

In the following it will sometimes turn out to be useful to describe skein
modules $\mathcal{S}(\sigma )$ as modules over the corresponding 
skein module $\mathcal{S}(D^3)$.  

\begin{proposition} For each local skein potential $\sigma $ the module 
$\mathcal{S}(M;\sigma )$ is a module over the commutative 
skein algebra $\mathcal{S}(D^3;\sigma )$,
and a module over the skein algebra $\mathcal{S}(\partial M;\sigma )$.
\end{proposition}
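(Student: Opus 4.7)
The plan is to define both module structures by the usual \emph{insertion} construction, and then verify well-definedness using only the locality hypothesis on $\sigma$ together with the connectedness of $M$ (and of the collar $\partial M\times [0,1]$ in the boundary case).

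First I would define the $\mathcal{S}(D^3;\sigma)$-action on the level of free modules. Fix, once and for all, a small embedded open $3$-ball $B\subset \mathrm{int}(M)$. For $K\in \mathcal{L}$ choose an isotopic representative disjoint from $B$ (possible since $K$ is a finite union of smoothly embedded circles in a $3$-manifold). For $K'\in \mathcal{L}(D^3)$ transported into $B$ by a fixed orientation preserving embedding $D^3\hookrightarrow B$, set $K'\cdot K := K'\sqcup K\in \mathcal{L}(M)$, and extend $\mathcal{R}$-bilinearly to $\mathcal{R}\mathcal{L}(D^3)\otimes\mathcal{R}\mathcal{L}(M)\rightarrow \mathcal{R}\mathcal{L}(M)$. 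Using that $M$ is connected and that the space of embeddings of a small $3$-ball into $M\setminus K$ is path connected (two small balls in a connected $3$-manifold are ambient isotopic away from any fixed link by choosing a path between their centers and thickening), I would argue that the isotopy class of the output $K'\cdot K$ is independent of the choices, so the action descends to $\mathcal{R}\mathcal{L}(D^3)\otimes \mathcal{R}\mathcal{L}(M)\rightarrow \mathcal{R}\mathcal{L}(M)$ well-defined on isotopy classes.

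Next comes the key step: the action descends to the skein quotient on both tensor factors. This is exactly where \emph{locality} of $\sigma$ is used. A defining relation of $\mathcal{S}(\sigma)$ on $\mathcal{L}(M)$ has the form $q^{-1}K_+-qK_- -\sigma(K_*)$, supported in a small oriented $3$-ball $B_*\subset M$. If $B_*$ is chosen (via isotopy) disjoint from the insertion ball $B$, then tensoring with $K'\in \mathcal{L}(D^3)$ just adds $K'$ as an unchanged collection of components outside $B_*$, and by Definition 5.2 the value $\sigma(K'\sqcup K_*)$ decomposes as $K'\sqcup \sigma(K_*)$ (the tangles $t_i$ replacing the crossing in $B_*$ are unaffected by components outside $B_*$). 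Hence the relation in $\mathcal{L}(M)$ is mapped to a relation in $\mathcal{L}(M)$. The same argument, applied with $B$ in the role of $B_*$, shows that relations on the $\mathcal{L}(D^3)$ side are sent to relations in $\mathcal{L}(M)$. I would then verify the module axioms: bilinearity is built in, the unit (empty link in $D^3$) acts as identity by construction, and associativity $(K_1'K_2')\cdot K=K_1'\cdot (K_2'\cdot K)$ follows by choosing two disjoint small balls $B_1,B_2$ in $M\setminus K$, which is again possible by connectedness. Commutativity of $\mathcal{S}(D^3;\sigma)$ itself, used only to say ``commutative algebra,'' follows by the standard exchange isotopy of two disjoint $3$-balls inside $D^3$.

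For the $\mathcal{S}(\partial M;\sigma)$-action I would replace the ambient insertion by a collar construction: fix a collar neighborhood $c:\partial M\times[0,1)\hookrightarrow M$ with $c(\partial M\times\{0\})=\partial M$. Given $K\subset M$, isotope it into $c(\partial M\times [1/2,1))$; given $L\subset \partial M\times I$, embed it into $c(\partial M\times(0,1/2))$. Set $L\cdot K:=L\sqcup K$. The same three points---independence of the collar/isotopy choices (using connectedness and uniqueness up to isotopy of collars), passage to the skein quotient (using locality of $\sigma$ exactly as above, since the defining $3$-ball of any skein relation can be pushed into a product collar), and the module axioms---go through verbatim. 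The algebra structure on $\mathcal{S}(\partial M\times I;\sigma)$ is the standard stacking product, and the action is by stacking the $\partial M\times I$ factor onto the outermost collar. The main obstacle, as I see it, is the well-definedness on the quotient; once the locality hypothesis is unpacked it is essentially an observation, but it is the only place where the structure of $\sigma$ actually enters the argument.
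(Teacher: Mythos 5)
Your proposal is correct and follows essentially the same route as the paper: the action is disjoint union with links in a separated $3$-ball (respectively a collar of $\partial M$), and locality of $\sigma$ is exactly what makes the pairing descend to the skein quotients. You simply spell out the well-definedness and module-axiom checks that the paper's very terse proof leaves implicit (the paper additionally remarks that the $D^3$ case can be seen as the boundary case for the complement of a ball, but that is a cosmetic difference).
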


\begin{proof} The disjoint union with links in a $3$-ball
separated from a given link is a well-defined operation.
Locality of the skein potential implies that this defines a 
pairing
$$\mathcal{S}(D^3;\sigma) \otimes \mathcal{S}(M;\sigma )\rightarrow
\mathcal{S}(M;\sigma ).$$
The same holds for $\mathcal{S}(\partial M;\sigma )$ using a collar
of $\partial M$. In fact, the case of $D^3$ is a special case where 
we replace $M$ by the complement of some open $3$-ball. Because of
locality this does not change the skein module. Also this allows to
identify the skein modules of $D^3$ and $S^2\times [0,1]$.
The algebra structures on $\mathcal{S}(M;\sigma )$ is defined
in the usual way using the $[0,1]$-structure. 
\end{proof}

If $\partial M$ is not a union of tori then the skein algebra usually
is not commutative. 

\vskip .1in

Let $\sigma $ be the \textit{Conway skein potential} $\sigma (K_*)=hK_0$
respectively $zK_0$ for a double-point of the same respectively
different components. In this case we add the vaccuum relation
$(q^{-1}-q)\emptyset =hU$. Note that $hU\neq \sigma (K_*)$ for any 
singular link $K_*\in \mathcal{L}[1]$. 
Recall that by definition our category contains 
the empty link and thus the empty $1$-morphism in $\mathcal{L}[1]$, which 
is the identity morphism of 
the empty link. 
When working with vaccum relations we actually 
only replace the trivial skein
relation $\sigma (\emptyset )=(q^{-1}-q)\emptyset$ 
e.\ g.\ in the Jones case (such that $(\partial -\sigma )(\emptyset )=0$
automatically holds) by the relation above.
This will have the advantage that
the kink relations (see section 6) can be already included into the 
structure of the ring $\mathfrak{R}:=\mathcal{S}(D^3;\sigma )$.

\section{The homomorphism $\mu $ and the topology of $M$}

For $k\geq 1$ let $\mathcal{K}[k] \subset \mathcal{L}[k]$ denote the set of
$k$-immersions with a distinguished self-crossing denoted $*$ 
such that one of the lobes of the
singular component of $K_*$ bounds a disk intersecting $K_*$ only in
its boundary along the lobe. We call this a \textit{kink} $k$-immersion.
There is a unique homology class $\gamma (K_*)\in H_1(\textrm{imm}[k-1])$
represented by the transversal path in $\textrm{imm}[k-1]$, which is defined by 
running through the natural
isotopy from $K_+$ to $K_-$ and the crossing change at $K_*$.
The disk bounding the trivial component in the smoothing $K_0$ defines
a null-homology of this loop in $\textrm{map}[k-1]$. 
Here $\textrm{map}[k]$ is the space of all
smooth maps of circles in $M$ with exactly $k$ fixed double points, with definition analogous 
to the definition of $\textrm{imm}[k]$ is section 3.
This null-homology
contains a single point contained in $\textrm{map}[k-1]\setminus \textrm{imm}[k-1]$, 
where the corresponding map contains the mapping of a
circle which embeds except at a single point with 
vanishing tangent vector.
The corresponding element is non-trivial in $H_1(\textrm{imm}[k-1])$. 
Note that
$$(\mu [k-1])(\gamma (K_*))= K_*,$$
and
$$(\partial [k-1])(K_*)=K_+-K_-=0$$
by construction for each $K_*\in \mathcal{K}[k]$.

\begin{remark} 
Let $\widetilde{\mathcal{L}}[k]$ denote the set of isotopy classes of
ordered singular $k$-links (the set of path components of the space
$\widetilde{\textrm{emb}}[k]$ of the space of ordered 
$k$-embeddings).
Then there is a well-defined \textit{onto} map 
$$
\chi [k]: \widetilde{\mathcal{L}}[k-1]\rightarrow
\mathcal{K}[k],
$$
which assigns to each isotopy class of ordered $(k-1)$-singular link the 
isotopy class of the immersion with an additional
kink in the 
\textit{first}
component away from possible preimages of double-points. 
\end{remark}

\begin{lemma}
The kernel of the epimorphism
$$j_*: H_1({\rm imm})\rightarrow H_1(\rm{map})$$
is generated by all elements of the form
$\gamma (K_*)$ for all $K_*\in \mathcal{K}[+1]$.
In particular the linear extension of
$\chi $ maps $\mathbb{Z}\widetilde{\mathcal{L}}$ onto 
$ker(j_*)$.
\end{lemma}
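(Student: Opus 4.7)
The plan is to derive the statement from the long exact sequence of the pair $(\textrm{map}[k-1],\textrm{imm}[k-1])$ together with a transversality analysis of the codimension-$1$ stratum of $\textrm{map}\setminus \textrm{imm}$. The relevant portion of the sequence is
$$H_2(\textrm{map}[k-1],\textrm{imm}[k-1])\xrightarrow{\partial'} H_1(\textrm{imm}[k-1])\xrightarrow{j_*} H_1(\textrm{map}[k-1])\to H_1(\textrm{map},\textrm{imm}),$$
which identifies $\ker(j_*)=\operatorname{im}(\partial')$. Epimorphicity of $j_*$ will simultaneously follow from the vanishing of the next connecting map, i.e.\ from showing that every relative $1$-cycle has balanced boundary, which again comes from the transversality analysis below.

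First I would describe the stratification of $\textrm{map}[k-1]\setminus \textrm{imm}[k-1]$: by the finite-dimensional polynomial approximations of Vassiliev \cite{V1}, \cite{V2} and standard local models of singularities of smooth maps of $1$-manifolds into $M$, the generic (codimension-$1$) stratum consists of smooth maps that are $(k-1)$-immersions except at a single point where the tangent vector vanishes to first order. This is exactly the cusp singularity that appears in the defining null-homology of $\gamma(K_*)$ given in the discussion preceding the lemma. By transversality (the analog of Lemma 3.2 for the pair $(\textrm{map},\textrm{imm})$), every class of $H_2(\textrm{map},\textrm{imm})$ is representable by a map of an oriented compact surface $F$ with $\partial F$ going to $\textrm{imm}[k-1]$, meeting the cusp stratum transversely in finitely many signed interior points and avoiding the codimension-$\geq 2$ strata. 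A local model around each such cusp point shows that the two generic perturbations into $\textrm{imm}[k-1]$ are precisely the two crossing resolutions of a kink, so $\partial'$ sends the small bounding disk around each cusp to $\pm\gamma(K_*)$ for a unique $K_*\in\mathcal{K}[k]$.

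Hence every element of $\ker(j_*)$ is a signed sum of classes $\gamma(K_*)$ with $K_*\in\mathcal{K}[k]$. To conclude that the linear extension of $\chi$ surjects onto $\ker(j_*)$, I would verify that every $K_*\in\mathcal{K}[k]$ lies in the image of $\chi$: by definition the kink in $K_*$ sits on some component of the underlying singular $(k-1)$-link, and up to isotopy and reordering we may take that component to be the first, producing an ordered lift $\tilde K\in\widetilde{\mathcal{L}}[k-1]$ with $\chi(\tilde K)=K_*$. The main technical obstacle will be ruling out contributions from the codimension-$2$ strata of $\textrm{map}[k-1]\setminus\textrm{imm}[k-1]$ (higher-order cusps, simultaneous cusps, and coincidences of a cusp with one of the fixed double points), by showing that any $2$-parameter family passing through such a stratum only imposes relations that already hold among the generators $\gamma(K_*)$. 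This step parallels the codimension-$2$ analysis used in the proof of Proposition 4.1 for the pair $(\textrm{imm},\textrm{emb})$, and I expect it to go through by the same method of listing the finite normal forms and checking each monodromy separately.
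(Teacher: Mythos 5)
Your argument is essentially the paper's own: the paper likewise represents a class of $\ker(j_*)$ by a surface in $\textrm{map}$ bounding the given $1$-cycle, puts it in general position with respect to the singularity stratification (Lin/Vassiliev transversality, as in Lemma 3.1 and Remark 3.2), and cuts out small disks around the isolated interior points mapping to maps with a single vanishing tangent vector, each boundary circle being a kink loop $\gamma (K_*)$; your detour through the exact sequence of the pair $(\textrm{map},\textrm{imm})$, with $\ker(j_*)=\mathrm{im}(\partial')$ and relative classes represented by surfaces, is only a repackaging of that argument, and your verification that $\chi$ hits every element of $\mathcal{K}$ matches Remark 6.1. One correction, though: the cusp stratum (one vanishing tangent vector) has codimension $2$ in $\textrm{map}$, not $1$ --- vanishing of the derivative is three conditions against one free parameter along the circle --- and this is exactly why your $2$-dimensional surface meets it transversely in finitely many interior points, and why the deeper degeneracies you list (higher-order cusps, simultaneous cusps, a cusp at one of the marked double points) have codimension at least $3$ and are simply avoided by a generic perturbation. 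Consequently the ``main technical obstacle'' you defer is vacuous: for this generation statement no analysis of monodromies around deeper strata is required (in contrast to Proposition 4.1, where the codimension-$2$ strata of the pair $(\textrm{imm},\textrm{emb})$ are precisely what produce the relations $\mathcal{D}$). Relatedly, since the stratum has codimension $2$, its local complement is connected, so the accurate local statement is not that ``the two generic perturbations are the two crossing resolutions of a kink'' but that the meridian of the stratum is the loop $\gamma (K_*)$, i.e.\ the null-homotopy through a single vanishing-tangent point described in the discussion preceding the lemma. With these adjustments your proof coincides with the paper's.
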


\begin{proof}
This is another application of Lin transversality, 
respectively a modification of Lemma 3.1, see also Remark 3.2 (i). 
Consider a mapping of
a surface $F$ into $\textrm{map}[k]$, which is transversal along $\partial F$. 
It can be
approximated relative $\partial F$ to the following way: The set
of points in $F$, which map into $\textrm{map}[k]\setminus \textrm{emb}[k]$ consists of a 
$1$-complex embedded in $F$, with vertices of possible orders $2, 4$ or
$1$ in the interior. Those of order $4$ or $2$ are mapped to
$\textrm{immk}[k,2]$. 
Those of order $1$ are mapped to a smooth map,
which is an immersion with $k$ double points 
and a single point in the complement of the double points
with vanishing tangent. The point with vanishing tangent appears in
the boundary of $\textrm{imm}[k,1]\subset \textrm{imm}[k]$.  
Now cut out small disks 
from $F$ around the vertices of order $1$. The restriction to the 
the boundary of a disk
represents an element in $H_1(\textrm{imm}[k])$ of the form $\gamma (K_*)$
for $K_*\in \mathcal{K}[k]$.  
\end{proof}

Let in the presentation $K_{**}\in
\mathcal{K}[2]$
the first place indicate a self-crossing with a bounding lobe.
Then $K_{*+}, K_{*-}$ are contained in
$\mathcal{K}[1]$.
This is not necessarily the case for $K_{+*}, K_{-*}$ but 
$K_{+*}=K_{-*}$.
Thus we have the following commuting diagram with exact rows, 
where $\mathcal{D}_{\bullet}$  is the subgroup of
$\mathbb{Z}(\mathcal{L}[1]\setminus \mathcal{K}[1])$, 
which is generated by the 
differentiability relations with 
\textit{all} four terms in $\mathcal{L}[1]\setminus \mathcal{K}[1]$:  
$$
\begin{CD}
0@>>>\mathcal{D}\cap \mathbb{Z}\mathcal{K}[1]@>>>\mathcal{D}[0]@>>>
\mathcal{D}_{\bullet }@>>>0
\\
@. @V\subset VV @V\subset VV @V\subset VV @. \\
0@>>>\mathbb{Z}\mathcal{K}[1]@>>>\mathbb{Z}\mathcal{L}[1]@>>>
\mathbb{Z}(\mathcal{L}[1]\setminus \mathcal{K}[1])@>>>0
\end{CD}
$$
and the diagram of homomorphisms
$$
\begin{CD}
H_1(\text{imm}[0])@>\mu [0]>> \mathbb{Z}\mathcal{L}[1]/\mathcal{D}[0] \\
@Vi_*[0]VV @VVV \\
H_1(\text{map}[0])@>\mu_{\bullet}>>\mathbb{Z}(\mathcal{L}[1]\setminus \mathcal{K}[1])/\mathcal{D}_{\bullet}
\end{CD}
$$
with the vertical right map defined by projection.

\begin{corollary}
There is the induced boundary homomorphism
$\partial_{\bullet }$ and the exact sequence
$$
\begin{CD}
H_1(\rm{emb}[0]) \rightarrow \it{H}_1(\rm{map}[0])@>\mu_{\bullet }>>
\mathbb{Z}(\mathcal{L}[1]\setminus
\mathcal{K}[1])/\mathcal{D}_{\bullet}
 @>\partial_{\bullet }>> 
\mathbb{Z}\mathcal{L}[0]@>\mathfrak{h}>> S\mathbb{Z}\hat{\pi }. 
\end{CD}
$$

\vskip .1in

There is the induced isomorphism
$$\rm{coker}(\mu [0])\cong \rm{coker}(\mu_{\bullet }).$$
\end{corollary}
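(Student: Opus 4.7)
The plan is to derive both statements by combining the exact sequence of Theorem 4.1 in degree $0$ with the diagrams displayed just above the corollary, via a Snake-Lemma style chase whose key topological input is Lemma 6.1 (kinks generate $\ker(j_\ast)$) together with the observation that $\mu[0](\gamma(K_\ast))=K_\ast$ for every $K_\ast\in\mathcal{K}[1]$.

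First I would verify that $\partial_{\bullet}$ is well-defined. For $K_\ast\in\mathcal{K}[1]$ the two Conway resolutions $K_+,K_-$ are isotopic in $\mathrm{emb}[0]$ (slide the kinked lobe across the bounding disc), so $\partial[0](K_\ast)=K_+-K_-=0$; hence $\partial[0]$ annihilates the subgroup $\mathbb{Z}\mathcal{K}[1]/(\mathcal{D}\cap\mathbb{Z}\mathcal{K}[1])$ of $\mathbb{Z}\mathcal{L}[1]/\mathcal{D}[0]$ and descends to $\partial_{\bullet}$ on the quotient. Exactness at $\mathbb{Z}\mathcal{L}[0]$ is then immediate because $\mathrm{im}(\partial_{\bullet})=\mathrm{im}(\partial[0])=\ker(\mathfrak{h})$ by Theorem 4.1. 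Exactness at the middle term is a one-step chase: given $w$ with $\partial_{\bullet}(w)=0$, lift $w$ through the surjection $p:\mathbb{Z}\mathcal{L}[1]/\mathcal{D}[0]\twoheadrightarrow\mathbb{Z}(\mathcal{L}[1]\setminus\mathcal{K}[1])/\mathcal{D}_{\bullet}$ to $\tilde w$; since $\partial[0](\tilde w)=0$, Theorem 4.1 produces $y\in H_1(\mathrm{imm}[0])$ with $\mu[0](y)=\tilde w$, and then $\mu_{\bullet}(j_\ast y)=p(\tilde w)=w$.

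For exactness at $H_1(\mathrm{map}[0])$, suppose $y\in H_1(\mathrm{map}[0])$ with $\mu_{\bullet}(y)=0$. By Lemma 6.1 lift $y$ to $\tilde y\in H_1(\mathrm{imm}[0])$. Commutativity forces $\mu[0](\tilde y)\in\ker(p)=\mathbb{Z}\mathcal{K}[1]/(\mathcal{D}\cap\mathbb{Z}\mathcal{K}[1])$. Write this class as $\sum n_i K_{\ast,i}$ with $K_{\ast,i}\in\mathcal{K}[1]$, and set $z:=\sum n_i\gamma(K_{\ast,i})\in H_1(\mathrm{imm}[0])$. By construction $\mu[0](z)=\sum n_i K_{\ast,i}=\mu[0](\tilde y)$, so $\tilde y-z\in\ker(\mu[0])=\mathrm{im}(H_1(\mathrm{emb}[0])\to H_1(\mathrm{imm}[0]))$; moreover $j_\ast(z)=0$ (the null-homology provided by the lobe-disc lives in $\mathrm{map}[k-1]$), so $y=j_\ast(\tilde y-z)$ lies in the image of $H_1(\mathrm{emb}[0])\to H_1(\mathrm{map}[0])$.

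For the cokernel isomorphism, the induced map $\bar p:\mathrm{coker}(\mu[0])\to\mathrm{coker}(\mu_{\bullet})$ is surjective because $p$ is. For injectivity, let $x\in\mathrm{coker}(\mu[0])$ map to $0$, choose a lift $\tilde x\in\mathbb{Z}\mathcal{L}[1]/\mathcal{D}[0]$, write $p(\tilde x)=\mu_{\bullet}(y)$, and lift $y=j_\ast(y')$ using Lemma 6.1. Then $\tilde x-\mu[0](y')\in\ker(p)$, and since every generator $K_\ast\in\mathcal{K}[1]$ of $\ker(p)$ equals $\mu[0](\gamma(K_\ast))$, we conclude that $\tilde x$ itself already lies in $\mathrm{im}(\mu[0])$, so $x=0$. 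The main obstacle, as throughout, is bookkeeping: keeping the two quotient columns straight and producing the corrective class $z$ (respectively adjusting $\tilde x$) by $\gamma$-classes that vanish in $H_1(\mathrm{map})$. Once $\mu[0](\gamma(K_\ast))=K_\ast$ and $j_\ast(\gamma(K_\ast))=0$ are in hand from Section 6 and Lemma 6.1, every other step is a routine diagram chase.
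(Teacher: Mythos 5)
Your proof is correct and follows the paper's own route: the paper's (terse) argument likewise reduces everything to the two displayed diagrams, Lemma 6.1, and the fact that $\mu[0]$ restricted to kink loops $\gamma(K_*)$ hits every generator of $\ker\bigl(\mathbb{Z}\mathcal{L}[1]/\mathcal{D}[0]\rightarrow \mathbb{Z}(\mathcal{L}[1]\setminus\mathcal{K}[1])/\mathcal{D}_{\bullet}\bigr)$, while $\partial[0]$ and $j_*$ kill those same classes. You have simply written out the diagram chases that the paper leaves implicit.
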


\vskip .1in

\begin{proof} Most of the claims follow by the construction of
the homomorphisms. The exactness at $H_1(\textrm{imm}[0])$ follows because
$\mu [0]$ restricts to an epimorphism
$$\mathcal{K}[0]\rightarrow
\mathbb{Z}\mathcal{K}[1]/(\mathcal{D}[1]\cap \mathbb{Z}\mathcal{K}[1]).$$
\end{proof}

The main link theoretic consequences will follow from
Theorem 1.1.

\begin{definition}
Let $M$ be a compact $3$-manifold. $M$ is called \textit{atoroidal}
(respectively \textit{aspherical}) if each 
$\pi_1$-injective map of a torus (respectively map of a $2$-sphere)
in $M$ is
homotopic into $\partial M$  
\end{definition}

Each irreducible $3$-manifold is aspherical.
Note that each hyperbolic $3$-manifold is aspherical and atoroidal.

\begin{theorem}
Suppose $M$ is aspherical and atoroidal. Then $\mu_{\bullet }=0$. 
\end{theorem}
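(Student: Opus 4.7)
The plan is to identify $H_1(\textrm{map}[0])$ in group-theoretic terms using asphericity, and then use atoroidality to realize every generator by a loop of \emph{embedded} links.

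First I would invoke the standard identification of the free loop space of a $K(\pi,1)$: since $M$ is aspherical with $\pi:=\pi_1(M)$, one has $\textrm{map}(S^1,M)\simeq \bigsqcup_{\alpha\in\hat\pi} BZ_\pi(g_\alpha)$, where $g_\alpha$ represents $\alpha$. More generally, each component of $\textrm{map}[0]=\bigsqcup_j\textrm{map}(\sqcup^j S^1,M)/\Sigma_j$ indexed by a monomial $\alpha=\prod_i[g_i]^{n_i}\in\mathfrak{b}[0]$ is weakly equivalent to $\prod_i SP^{n_i}(BZ_\pi(g_i))$. Using K\"unneth together with the fact that $H_1(SP^nX)\cong H_1(X)$ for $n\geq1$ and $X$ path connected (a Dold-Thom consequence), I would conclude that the first homology of this component is $\bigoplus_i Z_\pi(g_i)^{\mathrm{ab}}$.

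Next I would read off the geometric meaning of a generator. An element $h\in Z_\pi(g)$ (for some $g=g_i$, with the other components held static) corresponds by adjunction to a map $T^2\to M$ sending the two generators of $\pi_1(T^2)$ to $g$ and $h$. Invoking the hypotheses: atoroidality implies any $\pi_1$-injective torus is homotopic into $\partial M$, while asphericity guarantees that the homotopy class of a torus map into $M$ is determined by its induced homomorphism on $\pi_1$. Hence $\langle g,h\rangle\subseteq \pi$ is either cyclic, say $\langle c\rangle$ with $g=c^p$ and $h=c^q$, or it is a rank-two subgroup conjugate into $\pi_1(\partial_\nu M)$ for some boundary torus $\partial_\nu M$.

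In the cyclic case I would choose an embedded representative $\bar c\hookrightarrow M$ of the primitive element $c$ (available in any aspherical $3$-manifold by a routine general-position argument) together with a tubular neighborhood $T\cong D^2\times S^1$ of $\bar c$ disjoint from the other static link components. The family $f_s(t):=(\epsilon e^{2\pi i t},\, pt+s)\in D^2\times S^1$, $s\in S^1$, is then a loop in $\textrm{emb}[0]$ of $(1,p)$-cables of $\bar c$: each $f_s$ is an embedding, the $f_s$ remain disjoint from the other components, and the loop realizes the abstract generator coming from $c\in Z_\pi(c^p)$. In the peripheral case the analogous construction puts the family inside a level torus $T^2\times\{\epsilon\}$ of a collar of $\partial_\nu M$ (arranged to avoid the rest of the link) as a family of parallel disjoint loops sweeping out the torus. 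In either case the whole cycle lies entirely in $\textrm{emb}[0]\subset\textrm{imm}[0]$, so the path representative in $\textrm{imm}[0,1]\cup\textrm{emb}[0]$ has no transverse singular event and $\mu[0]$ vanishes on it; projecting modulo $\mathbb{Z}\mathcal{K}[1]$ gives $\mu_\bullet=0$ on the generator, hence on all of $H_1(\textrm{map}[0])$.

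The hard part will be verifying that the explicit families above truly represent the prescribed abstract generators in $Z_\pi(g)^{\mathrm{ab}}\subseteq H_1(\textrm{map}[0])$. Via the adjunction $\pi_1(\textrm{map}(S^1,M)_{[g]},g)\cong Z_\pi(g)$, this reduces to comparing two torus maps $T^2\to M$ with identical induced homomorphisms $\mathbb{Z}^2\to\pi$ — automatic in a $K(\pi,1)$ — but some bookkeeping is needed with basepoints and parametrizations of both the loops and the parameter circle. The remaining general-position details (existence of $\bar c$, arrangement of the tube or collar disjoint from the other components) are routine in dimension three.
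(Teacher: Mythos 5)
Your overall mechanism (realize enough generators of $H_1(\textrm{map}[0])$ by loops that stay in $\textrm{emb}[0]$, splitting via atoroidality into a ``cyclic'' case handled by rotating a cable in a solid-torus neighbourhood and a ``peripheral'' case handled in a collar of a boundary torus) is essentially the same as the paper's, but there are two genuine gaps. First, you are using the wrong meaning of \emph{aspherical}: in Definition 6.1 it means that every map of $S^2$ into $M$ is homotopic into $\partial M$, not that $M$ is a $K(\pi,1)$. The theorem must apply, e.g., to lens spaces (finite $\pi_1$, so not a $K(\pi,1)$) and to manifolds with $2$-sphere boundary components (where $\pi_2(M)\neq 0$), since these are exactly the cases fed into Theorems 1.2--1.4 and Section 9. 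Hence neither your identification $\textrm{map}(S^1,M)_{[g]}\simeq BZ_\pi(g)$ nor the claim that a torus map is determined up to homotopy by its induced $\pi_1$-homomorphism is available under the stated hypothesis. The $\pi_1/H_1$ part of your computation survives when $\pi_2(M)=0$ (via the evaluation fibration $\Omega M\to \textrm{map}\to M$), but when $\partial M$ contains spheres the image of $\pi_2(M)$ in $\pi_1$ of the loop-space components produces additional classes in $H_1(\textrm{map}[0])$ that your generating set omits; the paper disposes of these explicitly, by splitting the adjoint torus as a connected sum with embedded $2$-spheres pushed into a collar of the sphere boundary components so that the resulting loop still lies in $\textrm{emb}$.

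Second, the symmetric-product identification is incorrect: $\textrm{map}[0]$ is the quotient of the \emph{ordered} mapping space with diagonals deleted by the free $\Sigma_j$-action, not $\prod_i SP^{n_i}$ of loop-space components. For a monomial containing a free homotopy class with multiplicity $n\geq 2$, $\pi_1$ of the component is an extension of a permutation group by the ordered fundamental group, and $H_1$ contains extra ($2$-torsion) classes represented by loops exchanging two components in the same class; Dold--Thom erases exactly these (compare $(X\times X\setminus\Delta)/\Sigma_2$ with $SP^2X$ for contractible $X$: the former has $H_1\cong\mathbb{Z}/2$). So $\bigoplus_i Z_\pi(g_i)^{\mathrm{ab}}$ is not all of $H_1(\textrm{map}[0])$, and you never show $\mu_\bullet$ kills the exchange classes. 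This is precisely the point where the paper chooses its basepoint links with parallel copies, so that the exchange is realized by a loop in $\textrm{emb}$, and composes an arbitrary loop with such exchange loops to lift it to the ordered covering before running the one-moving-component analysis. The repair is in the same spirit as your cable construction, but as written your generating-set claim is false and the vanishing of $\mu_\bullet$ is not established on all of $H_1(\textrm{map}[0])$.
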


\begin{proof}
Most of the arguments are already contained in \cite{K1} and \cite{K2}. 
Fix a component $\textrm{map}_{\alpha }$ of the space $\textrm{map}[0]$ 
corresponding to $\alpha \in
\mathfrak{b}[0]$. 
Note that
$$H_*(\textrm{map}[0])\cong \bigoplus_{\alpha \in \mathfrak{b}[0]}H_*(\textrm{map}_{\alpha
}).$$
The Hurewicz homomorphism
$$\pi_1(\textrm{map}[0],f_{\alpha })\rightarrow H_1(\textrm{map}_{\alpha })$$
is onto.
Let $\widetilde{\textrm{map}}$ be the space of ordered smooth maps with the
fat diagonal excluded. Let $\widetilde{\textrm{map}}_{\alpha }$ denote 
the preimage of $map_{\alpha }$ under the covering projection
$$\widetilde{\textrm{map}}\rightarrow \textrm{map}.$$
Note that the path components of $\widetilde{\textrm{map}}_{\alpha }$ are
labelled by the orderings of $\alpha $. 
Let $a$ be an ordered sequence of elements in $\hat{\pi }$ corresponding to
$\alpha $.  We choose a representative embedding 
$\widetilde{f}_a \in \widetilde{\textrm{map}_a}$. Note that $f_a$ is ordered.
If $a$ does not contain multiple elements of $\hat{\pi }$ then the 
component $\widetilde{\textrm{map}}_a$ is homeomorphic to $\textrm{map}_{\alpha }$.
But multiple occurences of elements of $\hat{\pi }$ in $a$ imply that
there are homotopies joining ordered links with the same underyling
unordered link. Then the covering of the component is non-trivial
and the injection
$$\pi_1(\widetilde{\textrm{map}}_a,f_a)\rightarrow \pi_1(\textrm{map}[0],f_{\alpha })$$
is not necessarily onto. This means that a loop in $\textrm{imm}_{\alpha }$
does not necessarily lift to a loop in $\textrm{imm}_a$. 
In this case
we choose the embeddings $f_a$ in the following symmetric way: If some element
in $a$ appears multiple times then we choose
corresponding parallel components for $f_a$. Thus there exists an isotopy of 
$M$ which changes the order of components.
We can compose a given loop $\gamma $ in $\textrm{map}_{\alpha }$ with 
loops in $\textrm{emb}$ changing
the order in such a way that the composition of loops lifts to a loop
in $\textrm{map}_a$. Note that the composition still  
has the same image under $\mu_{\bullet}$ as $\gamma $. 
So the image of $\mu_{\bullet}$ on the component $H_1(\textrm{map}_{\alpha })$
corresponds to the image of a homomorphism defined on
$$\pi_1(\textrm{map}[0];f_a)\cong \prod_{i}\pi_1(\textrm{map}_{a_i},f_{a_i}).$$
Note that we can fill in the infinite codimensional fat diagonal
without changing the fundamental group and then identify our mapping space
with the product of single component mapping spaces.
This shows that the image of $\mu_{\bullet}$ is generated by the images of
elements represented by loops in $\textrm{imm}[0]$ which fix all but one
component. Let $L$ denote the union of those components which are
fixed during the homotopy. In the following note that free homotopy
implies homology in our case.

Next consider the singular torus map given by the adjoint of the
non-constant component of such a loop in $\textrm{imm}[0]$.
First assume that this map is \textit{not} essential.
Then the image of $\pi_1(S^1\times S^1)\cong \mathbb{Z}\oplus
\mathbb{Z}$ in $\pi_1(M)$ is cyclic (for details see \cite{K1} ). 
Thus on a neighbourhood of wedge $S^1\vee S^1$ in $S^1\times S^1$ 
the map can be homotoped into
the tubular neighbourhood $T$ of a knot embedded in $M$.  
The full torus map is homotopic in $T$ up to a map of a $2$-sphere.
We can easily arrange that the singular torus can be represented as
a connected sum of singular torus in $N$ and parallel copies of 
$2$-spheres contained in the collar neighbourhood of the union
of $2$-sphere components  
in $\partial M$. Now consider a singular torus contained in $T$.
We want to apply the homotopy exact sequence of the fibration (see
\cite{V2}, Appendix):
$$\Omega M \rightarrow \textrm{map}[0] \rightarrow M,$$ 
where $\Omega $ is the usual based loop functor for $M=T$ a solid
torus.
It is easy to see that all elements of $\pi_1(\textrm{map}[0];f)$ can be represented by
loops in $\textrm{imm}[0,1]$. (Each generator of $\pi_1(\textrm{map}[0];f)$ can be represented 
by longitudinal rotation of the knot $f$ in $T$.)  
The possible connected sum arcs for the $2$-sphere contribution can be
homotoped along. But since the two-spheres and connecting tubes can be
assumed embedded the resulting loop is still contained in $\textrm{imm}[0,1]$.      
Finally since the core of $T$ and connecting arcs are
$1$-dimensional and the $2$-sphere maps into a collar of the boundary, 
the intersections with $L$ can be avoided and the loop of maps 
is contained in $emb$. Thus the image under $\mu_{\bullet}$
vanishes.

It remains to discuss the case of an essential torus map. If this map is
homotopic into the boundary then the boundary contains a torus and we
can homotope into a collar neighbourhood $N\cong S^1\times S^1\times
[0,1]$ of a torus boundary component, in particular avoiding $L$.
As before we can argue in the
mapping space of $N$ and find a free homotopy of the torus map thus
inducing a homology of the given loop into $\textrm{emb}[0]$.   
\end{proof}

\begin{example} (a) Let $M=S^3$ and consider a \textit{connected sum}
element in $\mathcal{L}[1]$.
Thus $*$ is a self-crossing and $K_0$ is
the union of two nonempty links contained in disjoint balls.  
Then $K_+=K_-$ by rotation. The full $2\pi $-rotation 
defines a loop $\ell$ in $imm$.
Note that $\mu (\ell)$ is represented by $K_*$ so obviously not
trivial in $\mathcal{L}[1]$. Now consider a path in $\textrm{imm}$, which joins
one of the two link pieces in $K_0$ with trivial link by only crossing
changes. Since the rotation can be 
performed along this deformation there exists a free homotopy of the
loop $\ell $ into a loop for which the rotation is a kink rotation. 
Thus this phenomenon is not measured by $\mu_{\bullet}$.

\noindent (b) Suppose $M$ is not aspherical. Then either $M$ contains
$S^2\times S^1$ or $M$ is a connected sum. In each case it is easy to
show that $\mu_0\neq 0$, at least modulo the Poincare conjecture. For
example in the second case take a connected sum immersion with each
lobe homotopically non-trivial. A natural isotopy across the $S^2$
easily changes $K_+$ into $K_-$. But now $K_*$ cannot be trivial in
$(\mathbb{Z}\mathcal{L}[1]\setminus \mathcal{K}[1])/\mathcal{D}_{\bullet}$.
In order to show this map into $S\mathbb{Z}\hat{\pi}$ by taking
homotoppy classes.
\end{example} 

\section{The Jones deformation}

In this section we  describe the passage from the Conway 
boundary to the Jones boundary case. There is a natural way to
introduce the $q$-structure abstractly on 
the groupoid level. In the topological case this amounts to
a lift of twisted homology to the deformed fundamental groupoid.

Let $\mathcal{C}$ be a $2$-groupoid and let
$$\varepsilon : \textrm{hom} / \textrm{mor} \rightarrow \mathbb{Z}$$
be a \textit{twist} homomorphism (or abstract local system), i.\ e.\
$$\varepsilon (u\circ v)=\varepsilon (u)+\varepsilon (v)$$
for all $u,v\in hom$. (We identify elements in $\textrm{hom}$ with its
equivalence classes under the action of $\textrm{mor}$.)
It follows that
$$\varepsilon (u^{-1})=-\varepsilon (u)$$
since $\textrm{mor}(u\circ u^{-1},1)\neq \emptyset$
and $\varepsilon (1)=0$ for each identity $1$-morphism
$1\in \textrm{hom}(x,x)$.

We describe a $2$-groupoid $\mathcal{C}_q$, the \textit{Jones
deformation} of $\mathcal{C}$. 
To avoid confusion we will write the compositions in the category
$\mathcal{C}_q$ as $\diamond$. 

Let for $i=0,1,2$ 
$$(\textrm{hom}_i)_q:=\{q^jw|w\in \textrm{hom}_i,j\in \mathbb{Z} \},$$
where we use the notation $q^jw:=(j,w)$, i.\ e.\ 
$(\textrm{hom}_i)_q$ is the subset of scalar multiples 
of basis elements in free abelian group on $\textrm{hom}_i$.
To avoid confusion we write $q^0u$ for the image of
$u\in \textrm{hom}_i$ in $(\textrm{hom}_i)_q$ for all $i$.

\vskip .1in

There is the natural action of $\mathbb{Z}$ on $(\textrm{hom}_i)_q$
defined by 
$q^kw:=q^{j+k}u\in (\textrm{hom}_i)_q$ for $w=q^ju$, $u\in \textrm{hom}_i$ and 
$j,k\in \mathbb{Z}$ and $i=0,1,2$.

\vskip .1in

We define the target and source maps and the compositions.
If $u\in \textrm{hom}(x,y)$ then let
$q^0u\in \textrm{hom}_q(x,q^{-2\varepsilon(u)}y)$
and $q^iu\in hom_q(q^ix,q^{i-2\varepsilon (u)y})$
for $i\in \mathbb{Z}$.
Thus 
$$\textrm{sour}_q(q^iu)=q^i\textrm{sour}(u)\in \mathfrak{ob}_q$$
and
$$\textrm{targ}_q(q^iu)=q^{i-2\varepsilon (u)}\textrm{targ}(u)\in \mathfrak{ob}_q$$
for each $u\in \textrm{hom}$.
The source and target maps are \textit{equivariant}
with respect to the $\mathbb{Z}$-action.

Let $v\in \textrm{hom}(x,y)$ and $u\in \textrm{hom}(y,z)$.
It follows from the \textit{homomorphism} property of $\varepsilon$
that, if $u\circ v$ is defined then 
$$q^{-2\varepsilon (v)}u\diamond q^0v:=q^0(u\circ v)$$ is 
defined, and is a $1$-morphism
from $x$ to $q^{-2\varepsilon (u)-2\varepsilon (v)}y=
q^{-2\varepsilon (u\circ v)}y$ in $\mathcal{C}_q$.
A composition with $q^iv$ is defined such that equivariance holds:
$$q^i(a\diamond b)=(q^ia)\diamond (q^ib)$$
for $a,b\in \textrm{hom}_q$.
Note that $\textrm{hom}_q(q^ix,q^jy)=\emptyset$ if $j-i$ is odd.
Thus $\mathfrak{ob}_q$ naturally splits into a disjoint union of two 
sets with homomorphisms only between objects in each of the two sets.
 
Finally we define horizontal and vertical composition
of $2$-morphisms. Note that if $\textrm{mor}(u,v)\neq \emptyset $ then
$\varepsilon (u)=\varepsilon (v)$. Thus after identification of
$u,v\in \textrm{hom}$ with the corresponding elements in $\textrm{hom}_q$ (note that
the targets have changed) we can identify each $h\in \textrm{mor}(u,v)$ with
the corresponding $2$-morphism in $\textrm{mor}_q$. We will have 
$q^ih\in \textrm{mor}_q(q^iu,q^iv)$ for $h\in \textrm{mor}(u,v)$. The composition
in $\textrm{mor}$ induces obvious compositions $\diamond_1, \diamond_2$ in $\textrm{mor}_q$.

\vskip 0.1in

\begin{theorem}
\noindent \textup{(i)} $\mathcal{C}_q=(\mathfrak{ob}_q,\rm{hom}_q,\rm{mor}_q)$ defines a $2$-groupoid.

\noindent \textup{(ii)} There is an obvious functor of $2$-groupoids 
$p_q: \mathcal{C}_q\rightarrow \mathcal{C}$
defined by $q\mapsto 1$, which is onto on all sets $\rm{hom}_j$
for $j=0,1,2$.

\noindent \textup{(iii)} There is a natural $\varepsilon $-twisted functor 
$i_q: \mathcal{C}\rightarrow \mathcal{C}_q,$
where $i_q(a):=q^0a$ for $a\in \rm{hom}_i$.
This means that for $u,v\in \rm{hom}$:
$$i_q(u\circ v)=q^{-2\varepsilon (v)}u\diamond q^0(v).$$
For $v_1,v_2\in \rm{hom}(\it{x},\it{y})$, $u_1,u_2\in \rm{hom}(\it{y},\it{z})$ and
$g\in \rm{mor}(\it{u}_{\rm{1}},\it{u}_{\rm{2}})$, $h\in \rm{mor}(\it{v}_{\rm{1}},\it{v}_{\rm{2}})$ we have
$$i_q(g\circ _2h)=(q^{-2\varepsilon (v_1)}g)\diamond_2 q^0h.$$
For $h_1\in \rm{mor}(\it{v}_{\rm{2}},\it{v}_{\rm{3}})$ and $h_1\in \rm{mor}(\it{v}_{\rm{1}},\it{v}_{\rm{2}})$
with $v_i\in \rm{hom}(\it{x},\it{y})$ for $i=1,2,3$ we have the untwisted identity
$$i_q(h_1\circ_1 h_2)=(q^0h_1)\diamond_1 (q^0h_2).$$  

\noindent \textup{(iv)} The equation of functors holds: $p_q\circ i_q=id$.
\end{theorem}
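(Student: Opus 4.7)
The plan is to carry out each of the four parts by reducing to the corresponding structure on $\mathcal{C}$ and bookkeeping the $q$-exponents via the homomorphism property of $\varepsilon$. The leitmotiv is that $\varepsilon(u\circ v)=\varepsilon(u)+\varepsilon(v)$ makes the $q$-shifts telescope exactly as required by source/target matching.

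For (i), I would first verify that $\diamond$ on $\text{hom}_q$ is well-defined. Given $v\in \text{hom}(x,y)$ and $u\in \text{hom}(y,z)$, the conventions give $q^0 v\in \text{hom}_q(x,q^{-2\varepsilon(v)}y)$ and $q^{-2\varepsilon(v)}u\in \text{hom}_q(q^{-2\varepsilon(v)}y, q^{-2\varepsilon(v)-2\varepsilon(u)}z)$, so they are composable and the declared value $q^0(u\circ v)$ has target $q^{-2\varepsilon(u\circ v)}z$, matching the homomorphism property. Identities are $q^0\text{id}_x$, using that $\text{id}_x\circ\text{id}_x=\text{id}_x$ forces $\varepsilon(\text{id}_x)=0$. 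Associativity reduces to associativity in $\mathcal{C}$ after stripping off $q$-exponents. For invertibility, the weak inverse of $q^i u$ is $q^{i-2\varepsilon(u)}u^{-1}$, using $\varepsilon(u^{-1})=-\varepsilon(u)$; the $2$-morphism witnessing the equivalence with an identity is the same one as in $\mathcal{C}$, lifted via the remarks below. For $2$-morphisms, note that $\varepsilon$ is constant on $\text{mor}$-equivalence classes, so whenever $h\in\text{mor}(u,v)$ exists we have $\varepsilon(u)=\varepsilon(v)$, and the equivariant definition $q^i h\in\text{mor}_q(q^i u, q^i v)$ is well-defined. Both $\diamond_1$ and $\diamond_2$ then inherit associativity and invertibility directly.

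For (ii), the map $p_q$ is defined by $q^j w\mapsto w$ on all of $\text{hom}_q$; functoriality is immediate since $\diamond$ is defined as $\circ$ up to re-labelling, and surjectivity on each $\text{hom}_j$ is trivial from $w=p_q(q^0 w)$. For (iii), the three identities are essentially the definition of $\diamond$ unwound: the $1$-morphism identity $i_q(u\circ v)=q^{-2\varepsilon(v)}u\diamond q^0 v$ is just $q^0(u\circ v)$ written out; horizontal composition of $2$-morphisms inherits the same $q^{-2\varepsilon(v_1)}$-twist from the underlying $1$-morphism composition, so the formula for $\diamond_2$ follows; vertical composition $\diamond_1$ is between parallel $2$-morphisms (shared $1$-morphism source and target, hence shared $\varepsilon$-value), so no twist appears and the identity $i_q(h_1\circ_1 h_2)=(q^0 h_1)\diamond_1 (q^0 h_2)$ is formal. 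Finally, (iv) is immediate by construction since $p_q$ simply discards the $q^0$-label that $i_q$ attaches.

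The only step that requires genuine care is the compatibility of $\diamond_1$ and $\diamond_2$ (the interchange law) in $\mathcal{C}_q$, together with the check that the weak inverses chosen above actually assemble into a $2$-groupoid with inverses in the sense of the paper's conventions. I expect both to go through cleanly: vertical composition preserves $\varepsilon$, so the $q$-shifts prescribed by a horizontal composition restrict identically to each vertical layer, reducing the interchange law to the one in $\mathcal{C}$; and the chosen inverses $q^{i-2\varepsilon(u)}u^{-1}$ transport the $\mathcal{C}$-inverse data verbatim, since the underlying $2$-morphism equivalences are untwisted.
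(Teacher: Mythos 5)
Your proposal is correct and follows essentially the same route as the paper: the only point the paper spells out is the choice of inverse $(q^0u)^{-1}:=q^{-2\varepsilon(u)}u^{-1}$ (extended by $\mathbb{Z}$-equivariance), using $\varepsilon(u^{-1})=-\varepsilon(u)$, with the remaining verifications declared straightforward and left to the reader. Your bookkeeping of sources/targets via the homomorphism property of $\varepsilon$, the observation that $\varepsilon$ is constant on $\mathrm{mor}$-equivalence classes, and the reduction of associativity, interchange and parts (ii)--(iv) to the corresponding facts in $\mathcal{C}$ simply carry out those omitted checks in more detail than the paper does.
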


\begin{proof}
Let $x,y\in \mathfrak{ob}$.
and $u\in \textrm{hom}(x,y)$ with uniquely determined 
$u^{-1}\in \textrm{hom}(y,x)$. Then 
$(q^0u)^{-1}:=q^{-2\varepsilon (u)}u^{-1}\in \textrm{hom}_q(q^{-2\varepsilon (u)}y,x)$. 
We are using that $\varepsilon (u^{-1})=-\varepsilon (u)$.
The inverse in general is defined by equivariance
$$(q^iu)^{-1}:=q^i(q^0u)^{-1}.$$
for $i\in \mathbb{Z}$.
The remaining arguments are straightforward and left to the reader.
\end{proof}

\begin{remark} 
In terms of graphs 
(see \cite{BD} for the graphical explanation of categories) this means
that we take the original graph corresponding to the category
$\mathcal{C}$ and take $\mathbb{Z}\times \mathfrak{ob}$. Then define
the edges according to the above procedure determined by the twist.
This is like lifting 
paths into a covering space corresponding to $\varepsilon $. 
Note that the twist is a local system on the graph associated to the
$2$-groupoid.
\end{remark}

\begin{theorem}
Let $\mathcal{R}\supset \mathbb{Z}$ be a commutative ring with $1$.
Let $\mathcal{C}$ be a $2$-groupoid with linear potential
$\mathfrak{a}: \rm{hom} \rightarrow \it{\mathcal{R}\mathfrak{ob}}$.
Suppose that $\rm{hom} =\it{F(S,\circ)}$ or $\rm{hom}=\it{A(S,\circ )}$ for a subset 
$S\subset \rm{hom}$.
There exists the skein potential in $\mathcal{R}[q^{\pm 1}]$:
$$\mathfrak{a}_q: \rm{hom}_{\it{q}}\rightarrow \it{\mathcal{R}}\mathfrak{ob}_q\cong
\mathcal{R}[\it{q}^{\pm \rm{1}}]\mathfrak{ob},$$
defined uniquely by functoriality and $\mathbb{Z}$-equivariance:
$$\mathfrak{a}_q(q^0s):=\mathfrak{a}(s)$$
for all $s\in S$
\end{theorem}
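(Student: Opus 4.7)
The plan is to define $\mathfrak{a}_q$ first on scaled elementary morphisms via $\mathbb{Z}$-equivariance, then to extend to arbitrary compositions by contravariant functoriality, and finally to verify that the result is well-defined and satisfies the linear-potential axioms. First, since every element of $\textrm{hom}_q$ has the form $q^i u$ for some $u \in \textrm{hom}$ and $i \in \mathbb{Z}$, $\mathbb{Z}$-equivariance dictates $\mathfrak{a}_q(q^i u) := q^i \mathfrak{a}_q(q^0 u)$, so it suffices to specify $\mathfrak{a}_q$ on elements of the form $q^0 u$. I would write $u$ as a composable word $u = s_1 \circ \cdots \circ s_n$ in $S^{\pm 1}$ and use the inverse twist rule of Theorem 7.1 to rewrite
$$
q^0(s_1 \circ \cdots \circ s_n) = (q^{-2 E_2} s_1) \diamond (q^{-2 E_3} s_2) \diamond \cdots \diamond (q^0 s_n),
$$
where $E_k := \varepsilon(s_k) + \cdots + \varepsilon(s_n)$. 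Contravariant functoriality combined with $\mathbb{Z}$-equivariance then forces the formula
$$
\mathfrak{a}_q(q^0 u) := \sum_{k=1}^n q^{-2 E_{k+1}} \mathfrak{a}(s_k) \;\in\; \mathcal{R}[q^{\pm 1}]\mathfrak{ob},
$$
which I adopt as the definition. For generators in $S^{-1}$ I use $\mathfrak{a}(s^{-1}) = -\mathfrak{a}(s)$ together with $(q^0 s)^{-1} = q^{-2\varepsilon(s)} s^{-1}$ derived in the proof of Theorem 7.1.

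Next I would verify the formula is well-defined. In the free case $\textrm{hom} = F(S, \circ)$, the composable-word expression of $u$ is essentially unique, and compatibility with the free cancellation $s \circ s^{-1} = 1$ is immediate once one observes that $\varepsilon(s^{-1}) = -\varepsilon(s)$ forces the corresponding pair of summands to cancel after aligning their $q$-shifts. The main obstacle lies in the abelian case $\textrm{hom} = A(S, \circ)$, where distinct word orderings of the same element must yield the same value of $\mathfrak{a}_q$. I would reduce to checking invariance under adjacent swaps $s_k \circ s_{k+1} \leftrightarrow s_{k+1} \circ s_k$: such a swap rearranges the sum and redistributes the $q$-powers appearing in the exponents $E_\ell$, and the identity to check is a straightforward algebraic consequence of $\varepsilon$ being a homomorphism together with the already-established abelian structure of $\mathcal{R}\mathfrak{ob}$.

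Finally I would verify $\mathfrak{a}_q$ is a linear potential with respect to the extended ideal $I\mathcal{R}[q^{\pm 1}] \subset \mathcal{R}[q^{\pm 1}]$ and the same model complexity $\mathfrak{c}$. Since each summand of $\mathfrak{a}_q(q^0 u)$ is a $q$-power multiple of a term appearing in $\mathfrak{a}(u)$, involving exactly the same underlying object of $\mathfrak{ob}$, the complexity-filtration bound from Definition 2.4 transfers verbatim from $\mathfrak{a}$ to $\mathfrak{a}_q$. Uniqueness of $\mathfrak{a}_q$ holds at once because the formula above was forced by the stipulated properties; existence follows once well-definedness is established.
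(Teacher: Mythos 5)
The paper offers no actual proof of this theorem (it is treated as immediate from the definitions, with the follow-up formula of Proposition 7.1 merely asserted), so the only issue is whether your argument stands on its own. Your expansion of $q^0(s_1\circ\cdots\circ s_n)$ and the observation that additivity under $\diamond$ plus $\mathbb{Z}$-equivariance force a cumulative formula $\sum_k q^{-2E_{k+1}}\mathfrak{a}_q(q^0 s_k)$ are correct. But two points fail as written. First, the inverse generators: functoriality forces $\mathfrak{a}_q(q^0 s^{-1})=q^{2\varepsilon(s)}\mathfrak{a}_q\bigl((q^0 s)^{-1}\bigr)=-q^{2\varepsilon(s)}\mathfrak{a}(s)$, not $\mathfrak{a}(s^{-1})=-\mathfrak{a}(s)$. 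If you substitute $-\mathfrak{a}(s)$ literally into your displayed sum, the free cancellation you invoke fails: for $s^{-1}\circ s$ you get $q^{2\varepsilon(s)}\mathfrak{a}(s)-\mathfrak{a}(s)\neq 0$ rather than $0$. With the corrected value the free case $\textrm{hom}=F(S,\circ)$ does work out, so this is a fixable imprecision, but the formula you adopt as the definition is not the one your own constraints force.

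Second, and this is the genuine gap, the well-definedness claim in the case $\textrm{hom}=A(S,\circ)$ is false for the formula you derived. There the same morphism $u=s\circ t=t\circ s$ has two factorizations in $\mathcal{C}_q$, namely $q^0u=(q^{-2\varepsilon(t)}s)\diamond(q^0t)=(q^{-2\varepsilon(s)}t)\diamond(q^0s)$, and your cumulative expression gives $q^{-2\varepsilon(t)}\mathfrak{a}(s)+\mathfrak{a}(t)$ for one ordering and $q^{-2\varepsilon(s)}\mathfrak{a}(t)+\mathfrak{a}(s)$ for the other; for two positive generators with $\varepsilon(s)=\varepsilon(t)=1$ these differ by $(1-q^{-2})\bigl(\mathfrak{a}(t)-\mathfrak{a}(s)\bigr)$, which is nonzero in general. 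So invariance under adjacent swaps is not a routine consequence of $\varepsilon$ being a homomorphism and the abelian target; it fails outright, and this is exactly where the substance of the statement lies in the abelian case. To close it you must either restrict the claim to $F(S,\circ)$, or explain in what sense $\mathfrak{a}_q$ is defined (e.g.\ on composable words, with reordering discrepancies absorbed later into the indeterminacy subgroups of Theorem 2.4, which is how the rest of the paper uses such expansions), or change the normalization on generators; note that the order-independent formula the paper states after the theorem is in visible tension with the cumulative formula that strict functoriality and equivariance actually produce, which is a sign that this point needs an argument rather than an appeal to routine algebra.
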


By assumption the following equation holds:
$$\mathfrak{a}_q((q^0s)^{-1})=\mathfrak{a}(s^{-1})=
-q^n\mathfrak{a}(s).$$
The next useful formula is easily proved from the
definitions.

\begin{proposition} Let $\mathfrak{a},S$ be as in the last theorem
and and let the $2$-groupoid $\mathcal{C}$ be the the homotopy Vassiliev groupoid $\mathcal{C}(M)$
or the homology Vassiliev groupoid $\mathcal{B}(M)$.
Let $s_j\in S$ and $\epsilon_j\in \{\pm 1\}$ for $j=1,\ldots ,r$. Then 
the following formula holds:
$$\mathfrak{a}_q(i_q(s_r^{\epsilon _r} \circ
s_{r-1}^{\epsilon_{r-1}}\circ \ldots \circ s_1^{\epsilon_1}))=
\sum_{j=1}^r\epsilon_jq^{-\epsilon_j\varepsilon (s_j)}\mathfrak{a}(s_j)$$
\end{proposition}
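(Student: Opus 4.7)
The plan is to prove the formula by induction on the length $r$ of the word, using the defining equation $\mathfrak{a}_q(q^0 s) := \mathfrak{a}(s)$, the contravariant functoriality of $\mathfrak{a}_q$ into the abelian target $\mathcal{R}\mathfrak{ob}_q$, its $\mathbb{Z}$-equivariance, the twisted functor formula from Theorem~7.1(iii), and the homomorphism property of $\varepsilon$.

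For the base case $r=1$ with $\epsilon_1 = +1$ the claim follows essentially by inspection from the definition of $\mathfrak{a}_q$ on generators. For $\epsilon_1 = -1$ I would apply $\mathfrak{a}_q$ to the twisted-functoriality identity $i_q(s_1^{-1} \circ s_1) = q^{-2\varepsilon(s_1)} s_1^{-1} \diamond q^0 s_1$, observe that $s_1^{-1} \circ s_1$ is equivalent to an identity (so $\mathfrak{a}_q$ of the composite vanishes), and use $\mathbb{Z}$-equivariance together with $(q^0 s_1)^{-1} = q^{-2\varepsilon(s_1)} s_1^{-1}$ to extract a closed-form expression for $\mathfrak{a}_q(q^0 s_1^{-1})$; this is essentially the content of the equation $\mathfrak{a}_q((q^0 s)^{-1}) = -q^n \mathfrak{a}(s)$ quoted in the paper just before the proposition.

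For the inductive step, set $w := s_{r-1}^{\epsilon_{r-1}} \circ \cdots \circ s_1^{\epsilon_1}$ and apply Theorem~7.1(iii) to split off the last generator:
\[ i_q(s_r^{\epsilon_r} \circ w) = q^{-2\varepsilon(w)}\, s_r^{\epsilon_r} \diamond i_q(w). \]
Applying $\mathfrak{a}_q$, the contravariant-functor-to-abelian-group property turns $\diamond$ into addition and $\mathbb{Z}$-equivariance pulls out the $q$-factor, yielding
\[ \mathfrak{a}_q(i_q(s_r^{\epsilon_r} \circ w)) = q^{-2\varepsilon(w)}\, \mathfrak{a}_q(q^0 s_r^{\epsilon_r}) + \mathfrak{a}_q(i_q(w)). \]
I would then substitute the base case into the first summand and the inductive hypothesis into the second, and use $\varepsilon(w) = \sum_{i<r} \epsilon_i \varepsilon(s_i)$ to reorganize the outcome into the claimed additive form.

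The hard part will be the bookkeeping of $q$-exponents: the cumulative twist $q^{-2\varepsilon(w)}$ sitting in front of the $r$-th contribution has to interact with the partial-sum twists inside $\mathfrak{a}_q(i_q(w))$ in exactly the right way so that each generator $s_j$ ends up contributing only its own factor $\epsilon_j q^{-\epsilon_j \varepsilon(s_j)}$, with no residual cross-terms from other generators. Geometrically this is the assertion that in the $\mathbb{Z}$-cover of the groupoid graph (Remark~7.1) the $\mathfrak{a}_q$-value of a lifted path is simply the sum over edges of their intrinsic contributions; managing this telescoping cleanly, together with the sign flips produced by the base case for $\epsilon_j = -1$, is where the real work of the proof lies.
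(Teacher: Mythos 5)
Your induction is set up along the only reasonable lines, but the proof is not complete, and the step you defer as ``bookkeeping'' is exactly where it breaks down. Already the base case does not ``follow by inspection'': by the defining equation you yourself quote, $\mathfrak{a}_q(i_q(s_1))=\mathfrak{a}_q(q^0s_1)=\mathfrak{a}(s_1)$, whereas the claimed right-hand side for $r=1$, $\epsilon_1=+1$ is $q^{-\varepsilon(s_1)}\mathfrak{a}(s_1)$; so any proof must begin by pinning down the normalization of $\mathfrak{a}_q$ on generators and on their inverses (via $(q^0s)^{-1}=q^{-2\varepsilon(s)}s^{-1}$, which gives $\mathfrak{a}_q(q^0s^{-1})=-q^{2\varepsilon(s)}\mathfrak{a}_q(q^0s)$), and this normalization issue is precisely what the proposition turns on.

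Second, the inductive step cannot close as you set it up. Your identity $\mathfrak{a}_q(i_q(s_r^{\epsilon_r}\circ w))=q^{-2\varepsilon(w)}\,\mathfrak{a}_q(q^0s_r^{\epsilon_r})+\mathfrak{a}_q(i_q(w))$ is correct, but once the inductive hypothesis is inserted the second summand is already in the final claimed form, so there are no ``partial-sum twists inside'' it for the prefactor to telescope against; closing the induction would require $q^{-2\varepsilon(w)}\mathfrak{a}_q(q^0s_r^{\epsilon_r})=\epsilon_rq^{-\epsilon_r\varepsilon(s_r)}\mathfrak{a}(s_r)$, whose left side depends on $\varepsilon(w)=\sum_{i<r}\epsilon_i\varepsilon(s_i)$ while the right side does not, and in the free situation $\textrm{hom}=F(S,\circ)$ there are no relations among the $\mathfrak{a}(s_j)$ to rescue this. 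What your recursion actually produces is $\sum_{j}\epsilon_j\,q^{-2\sum_{i<j}\epsilon_i\varepsilon(s_i)}\,q^{(1-\epsilon_j)\varepsilon(s_j)}\,\mathfrak{a}(s_j)$, i.e.\ each term carries a cumulative parallel-transport factor, exactly analogous to the explicit formula for $\widetilde{\mu}$ given after Theorem 7.3. The paper itself gives no written argument beyond ``easily proved from the definitions,'' so the burden of reconciling the stated per-generator factor $\epsilon_jq^{-\epsilon_j\varepsilon(s_j)}$ with the definitions (a renormalization $\mathfrak{a}_q(q^0s)=q^{-\varepsilon(s)}\mathfrak{a}(s)$ produces that factor but still leaves the cumulative twists) falls entirely on your argument; deferring it as ``where the real work lies'' is a genuine gap, not a technicality.
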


\begin{remark} 
We have for $\mathcal{C}(M)$ respectively $\mathcal{B}(M)$, 
$\textrm{hom}[k]=F(\mathcal{L}[k+1],\circ )$ 
respectively $\textrm{hom}[k]=A(\mathcal{L}[k+1],\circ )$.
We define $\varepsilon (K^{\pm})=\pm 1$ for $K\in \mathcal{L}[k+1]$.
Note that 
$K_*$ corresponds to the morphism $s_*$ defined by the crossing change
from $-$ to $+$ at $*$.
We have $\textrm{hom}_q[k]=\textrm{hom}[k]_k\subset \mathbb{Z}[q^{\pm }\mathcal{L}[k+1]$
in a similar way (see also the description of
$\mathbb{Z}[q^{\pm}]\mathcal{L}[k+1]$ as twisted homology below).
The following is important. In the $q$-deformed case $K_*$ corresponds
to
the morphism defined by $K_-\mapsto q^{-2}K_+$ and
$K_*^{-1}$ corresponds to the morphism
$q^{-2}K_+\mapsto K_-$, which the $q^{-2}$-multiple of the usual
morphism $K_+\mapsto q^2K_-$. It is this correspondence which makes
$\textrm{hom}_q\subset \mathbb{Z}[q^{\pm}]\mathcal{L}[+1]$ a morphism.
\end{remark}

Let $\sigma $ be a skein potential in $\mathcal{R}$.
By Theorem 7.2 $\sigma $ induces a skein potential
$\mathcal{L}[+1]\rightarrow \mathcal{R}\mathcal{L}$ in the following
way. Note that $\sigma $ induces a linear potential in the usual way.
This linear potential induces a linear potential in the deformed 
category $\mathcal{B}_q$. 
This defines a skein potential 
$$\sigma_q: \mathcal{L}[+1]\rightarrow \mathcal{R}[q^{\pm}]\mathcal{L}.$$

Then
$$\sigma_q(i_q(s_r^{\epsilon _r} \circ
s_{r-1}^{\epsilon_{r-1}}\circ \ldots \circ s_1^{\epsilon_1}))=
\sum_{j=1}^r\epsilon_jq^{-\epsilon_j}\sigma(s_j)$$
where $s_j=K_{*,j}$ is a composable sequence of elements in
$\mathcal{L}[k+1]$.  

\begin{definition} We define that $(\mathcal{C},\varepsilon )$ has \textit{kinks}
if for each $x\in \mathfrak{ob}$ there exists a kink morphism $k_x\in \textrm{hom}(x,x)$
such that $\varepsilon (k_x)=1$. 
\end{definition}

Now suppose $(\mathcal{C},\varepsilon )$ has kinks
and let $\mathfrak{m}$ be a set of models of $\mathcal{C}$. 
Then obviously $\mathfrak{m}_q:=\mathfrak{m}\cup q\mathfrak{m}$ is a set of models 
of the category $\mathcal{C}_q$. 
Let $i\in \mathbb{Z}$. 
For $x\in \mathfrak{ob}$ we can find
 $u\in \textrm{hom}(x,m)$. By possibly iterated composition with kink morphisms
we can arrange that $\varepsilon (u)\in \{-i,-i-1\}$. 
Thus $q^iu\in \textrm{hom}(q^ix,m)$ or $q^iu\in \textrm{hom}(q^ix,qm)$.
Now assume that $I\subset R$ is an ideal and $\mathfrak{c}$ 
is a model complexity on $\mathfrak{m}$. Then we define $\mathfrak{c}_q$ 
by $\mathfrak{c}_q(m)=\mathfrak{c}_q(qm)=\mathfrak{c}(m)$. Let
$I_q\subset \mathcal{R}[q^{\pm 1}]$ be the ideal generated by $I$ in the
extension. Then it is easy to see that if $\mathfrak{a}$ is linear with respect to $I$ and $\mathfrak{c}$ then $\mathfrak{a}_q$ is linear with respect to
$I_q$ and $\mathfrak{c}_q$.    

Thus we can apply Theorem 2.4 to $(\mathcal{C}_q,\mathfrak{a}_q,\mathfrak{m}_q)$.
and deduce the map
$$\mathfrak{ob}_q\rightarrow (\mathcal{R}[q^{\pm 1}]\mathfrak{m}_q)[[I_q]]/\mathcal{I}_q,$$
with the submodule $\mathcal{I}_q$ defined in the obvious way.

In the topological case, which we consider here,
the passage from $q=1$ to Jones theory is more easily described by 
introducing a \textit{local system} on the space $\textrm{imm}$. This viewpoint has already
been used in \cite{K2}. We refer to there and to \cite{W} for some of the technical details.
This local system appear naturally in trying to work the kink
contributions into the theory as will be seen in the following.

\vskip .1in

Throughout this section we assume from now on $k=0$ and omit the grading index.

\vskip .1in

Let $R:=\mathbb{Z}[q^{\pm 1}]$. 
The idea is that a \textit{universal} Jones type relation
should be of the form:
$$q^{-1}K_+-qK_--K_*$$
respectively 
$$q^{2\epsilon}K_{\epsilon}-K_{-\epsilon}-\epsilon K_*=0$$
for $\epsilon =\pm 1$.

\vskip .1in

Consider the trivial bundle 
over $\textrm{imm}$ with fiber $R$.
Let $\gamma $ be a loop in $\textrm{imm}$ which is tansversal (see section 3). Then the
\textit{oriented intersection number with the discriminant} 
$\delta ' (\gamma )$ is well-defined.
Formally it can be defined as the homomorphism:
$$
\begin{CD}
\delta ': H_1(\textrm{imm})@>\mu >> \mathbb{Z}\mathcal{L}[1]/\mathcal{D}
@>>>\mathbb{Z}, 
\end{CD}
$$
where the second homomorphism is defined my mapping each element of 
$\mathbb{Z}\mathcal{L}[1]$ to the generator $1\in \mathbb{Z}$.
For fixed basepoints $f_{\alpha }$ in the components of $\textrm{imm}$ 
(possibly represented by $\mathfrak{s}(\mathfrak{b})$) let 
$$\pi_1(\textrm{imm}):=\bigcup_{\alpha \in \mathfrak{b}}\pi_1(\textrm{imm};f_{\alpha })$$   

Then define thefollowing  map, which restricts to homomorphisms on the fundamental
groups of the path components of $\textrm{imm}$:
$$\delta : \pi_1(\textrm{imm})\rightarrow \textrm{Aut}(\mathbb{Z}[q^{\pm 1}])$$
is defined by mapping a loop $\gamma $ to the multiplication by
$q^{-2(\delta ' \circ H)(\gamma ))}$. Here we use the Hurewicz homomorphism
$$H: \pi_1(\textrm{imm})\rightarrow H_1(\textrm{imm}).$$ 
The map $\delta $ induces the local system $\widetilde{R}$ on
$\textrm{imm}$. It is called the \textit{Jones local system} since it is
related to the Jones respectively Homfly polynomial skein relation.
 
\begin{remark} 
It is possible to describe the local system \textit{directly} as
a functor on the \textit{transversal fundamental groupoid} of $\textrm{imm}$.
The objects of this category are points in $\textrm{emb}$ and the morphisms
are homotopy classes of paths in $\textrm{imm}$ joining two embeddings. 
Now represent a path in $\textrm{imm}$ with endpoints in $\textrm{emb}$ relative to the 
boundary by a transversal path. Then compute the oriented 
intersection number with the discriminant.
So in this case we apply the homomorphism
$$H_1(\textrm{imm},\textrm{emb})\rightarrow
\mathbb{Z}\mathcal{L}[1]/\mathcal{D}\rightarrow \mathbb{Z}$$
with the first homomorphism defined in the proof of proposition 1.
Now the boundary operator on chains with coefficients in $R$ 
is easily defined as usual using the \textit{parallel transport}
on suitable paths. Thus our construction actually given by a
transversal chain construction as usual in string topology.
\end{remark}

Note that the local system is trivial over $\textrm{emb}$. 
Also by using the kink loops described above we see that
$$H_0(\textrm{imm};\widetilde{R})\cong SR\hat{\pi }/(q^2-1)\cong
S\mathbb{Z}\hat{\pi}\oplus qS\mathbb{Z}\hat{\pi}.$$
Let $\widetilde{\mathcal{D}}$ be the submodule of $R\mathcal{L}[1]$
generated by
all elements of the form
$$q^{-1}K_{*+}-qK_{*-}-q^{-1}K_{+*}+qK_{-*}$$
and let
$$\widetilde{\partial }: R\mathcal{L}[1]/\widetilde{\mathcal{D}}\rightarrow R\mathcal{L}$$
be defined by
$$\widetilde{\partial }(K_*)=q^{-2}K_+-K_-.$$
Note that
$$\widetilde{\partial }(q^{-1}K_{*+}-qK_{*-}-q^{-1}K_{+*}+qK_{-*})=0.$$
The following theorem is proved similarly to Theorem 4.1.

\begin{theorem}
The homology exact sequence for the local system $\widetilde{R}$ on
the pair of spaces $(\rm{imm},\rm{emb})$ is isomorphic to the exact sequence:
$$
\begin{CD}
H_1({\rm emb};R)\rightarrow H_1({\rm imm};\widetilde{R})@>\widetilde{\mu }>>
R\mathcal{L}[1]/\widetilde{\mathcal{D}}@>\widetilde{\partial}>>
R\mathcal{L}@>\widetilde{\mathfrak{h}}>>SR\hat{\pi }/(q^2-1)
\end{CD}
$$
with $\widetilde{\mathfrak{h}}$ onto and the homomorphism
$\widetilde{\mu }$ described below.
\end{theorem}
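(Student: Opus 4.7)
The strategy mirrors the proof of Theorem 4.1 and Proposition 4.1, with local-system bookkeeping added at every step. First I would write out the long exact sequence of the pair $(\text{imm},\text{emb})$ with coefficients in $\widetilde{R}$ and identify the easy ends. Since the Jones local system is trivial on $\text{emb}$ (the defining cocycle $\delta'$ vanishes on loops missing the discriminant), the identification $H_0(\text{emb};R)\cong R\mathcal{L}$ from Theorem 4.1 carries over verbatim. For $H_0(\text{imm};\widetilde{R})$, decompose by path components, which are indexed by $\mathfrak{b}[0]$; on each component the kink loop provides a class in $\pi_1(\text{imm})$ whose monodromy is multiplication by $q^{-2}$, so the $\pi_1$-coinvariants of $R$ reduce to $R/(q^2-1)R$. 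Summing over components yields $H_0(\text{imm};\widetilde{R})\cong SR\hat{\pi}/(q^2-1)$, and transversality of $\pi_0(\text{emb})\to\pi_0(\text{imm})$ shows that $\widetilde{\mathfrak{h}}$ is onto.

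Second, I would identify the relative group $H_1(\text{imm},\text{emb};\widetilde{R})$ with $R\mathcal{L}[1]/\widetilde{\mathcal{D}}$, again following Proposition 4.1 but tracking the twist. Every relative $1$-cycle is represented by a transversal path $\gamma:I\to\text{imm}$ whose singular parameters land in $\text{imm}[0,1]$; at each such parameter, the coorientation of the discriminant contributes a sign $\pm 1$, and the parallel transport along $\gamma$ up to that parameter contributes a factor $q^n$ recording the running intersection number with the discriminant so far. The resulting element of $R\mathcal{L}[1]$ is exactly what Proposition 7.1 computes on the composable word of crossing changes realized by $\gamma$. Under this identification, the connecting homomorphism sends the class of a one-crossing transversal path at $K_*$ to $q^{-2}K_+-K_-$, matching $\widetilde{\partial}$. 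For the relations, Lemma 3.1 lets me perturb any relative $2$-chain $F\to\text{imm}$ so that the preimage of $\text{imm}\setminus\text{emb}$ is a properly embedded $1$-complex in $F$ with four-valent interior vertices mapping to $\text{imm}[0,2]_\bullet$ (for $k=0$ the triple-point and tangency strata are absent). The twisted monodromy around one such vertex produces precisely the generator $q^{-1}K_{*+}-qK_{*-}-q^{-1}K_{+*}+qK_{-*}$ of $\widetilde{\mathcal{D}}$; a direct computation of $\widetilde{\partial}$ on this combination verifies that it does lie in the kernel, confirming consistency. The homomorphism $\widetilde{\mu}$ is then defined on an absolute twisted $1$-cycle by the same signed, $q$-weighted intersection with the discriminant, and exactness of the identified sequence is inherited from exactness of the twisted homology long exact sequence.

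The main obstacle is not the topology, which is essentially the same as in Theorem 4.1, but the coherent bookkeeping of parallel transport through the perturbation of Lemma 3.1. One must fix conventions: orientation of the path, choice of basepoint for local-coefficient computations, and the direction in which the sign and coefficient at each singular crossing are read. The cleanest way is to invoke the transversal fundamental groupoid of Remark 7.3, so that $\widetilde{R}$ is a genuine functor out of a category and horizontal/vertical composition of paths translates into multiplication of monodromies, matching the $q$-equivariance conventions of Section 7. With these conventions in place, the identifications above assemble into the desired isomorphism of exact sequences.
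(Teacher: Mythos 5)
Your proposal follows the same route as the paper, which proves this theorem by repeating the transversality argument of Theorem 4.1 and Proposition 4.1 with the Jones local system tracked through: triviality of $\widetilde{R}$ over $\textrm{emb}$, kink-loop monodromy giving the $(q^2-1)$ coinvariants for $H_0(\textrm{imm};\widetilde{R})$, Lemma 3.1 perturbation of relative $2$-chains so that the twisted monodromy around codimension-two vertices yields the generators of $\widetilde{\mathcal{D}}$, and the signed $q$-weighted intersection formula defining $\widetilde{\mu}$. Your treatment is correct and essentially identical to the paper's (which is stated only as "proved similarly to Theorem 4.1" plus the explicit formula for $\widetilde{\mu}$), with your groupoid bookkeeping matching Remark 7.3.
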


The homomorphism $\tilde{\mu }$ has the following description:
Consider a $1$-chain representing a homology 
class in $H_1(\textrm{imm};\widetilde{R})$. We can approximate corresponding maps
on $1$-simplices by transversal maps, i.\ e.\ with the boundaries mapping
into $\textrm{emb}$. Now let a simplex $s$ be given by a transversal map 
$$\beta : ([0,1], \{0,1 \}) \rightarrow (\textrm{imm},\textrm{emb})$$
and let $0<t_1<t_2< \ldots t_n<1$ be the parameters which map
into $\textrm{imm}[1]$. Let the sign at $t_i$ be $\epsilon_i \in 
\{0,1 \}$, with $\epsilon_i=-1$ if the path crosses the
discriminant from the \textit{positive to the negative side}.

$$\tilde{\mu }(s)=\sum_{i=1}^n \epsilon_iq^{2(\epsilon_1+\ldots
\epsilon_{i-1})+\epsilon_i}K_{*,i},$$
where $K_{*,i}$ is the isotopy class of $\beta (t_i)$.

\begin{remark} 
Combining $1$-simplices with cancelling boundary terms in a 
$1$-chain it follows easily that there is an epimorphism
$$R\pi_1^0(\textrm{imm})\rightarrow H_1(\textrm{imm};\widetilde{R}),$$
where 
$\pi_1^0(\textrm{imm})$ is the union (over $\mathfrak{b}$) of the 
subgroups of homotopy classes of
those loops with trivial index (thus the $1$-simplex representing 
the loop has trivial boundary with local coefficients).
Note that the effect of changing the basepoint of a loop in $f\in \textrm{emb}$
representing an element in $\pi_1^0(\textrm{imm};f)$ to a basepoint $g$ by pre-
and postcomposition with a path from $g$ to $f$ and the reverse path
multiplies by a power of $q$. Thus we can restrict to a single
basepoint in each component. 
\end{remark}

From the exact sequence in theorem  
we conclude:
\begin{corollary}
There is the short exact sequence
of $R$-modules: 
$$
\begin{CD}
0\rightarrow \left( R\mathcal{L}[1]/\widetilde{\mathcal{D}} \right)
/{\rm im}(\widetilde{\mu }) \rightarrow
R\mathcal{L} \rightarrow 
SR\hat{\pi }/(q^2-1)\rightarrow 0,
\end{CD}
$$
\end{corollary}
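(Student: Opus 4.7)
The plan is to extract the short exact sequence directly from the four-term exact sequence of Theorem 7.3. By the theorem, we have
$$H_1(\textrm{imm};\widetilde{R}) \xrightarrow{\widetilde{\mu}} R\mathcal{L}[1]/\widetilde{\mathcal{D}} \xrightarrow{\widetilde{\partial}} R\mathcal{L} \xrightarrow{\widetilde{\mathfrak{h}}} SR\hat{\pi}/(q^2-1)$$
with $\widetilde{\mathfrak{h}}$ surjective. This immediately produces the desired three-term exact sequence in the standard way: truncate on the left by quotienting out $\textrm{im}(\widetilde{\mu})$, and extend on the right by a $0$ using surjectivity of $\widetilde{\mathfrak{h}}$.

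More concretely, I would carry out the following steps. First, exactness at $R\mathcal{L}[1]/\widetilde{\mathcal{D}}$ says $\ker(\widetilde{\partial}) = \textrm{im}(\widetilde{\mu})$, so $\widetilde{\partial}$ factors through an injection
$$\overline{\widetilde{\partial}}: \bigl(R\mathcal{L}[1]/\widetilde{\mathcal{D}}\bigr)/\textrm{im}(\widetilde{\mu}) \hookrightarrow R\mathcal{L}.$$
Second, exactness at $R\mathcal{L}$ says $\textrm{im}(\widetilde{\partial}) = \ker(\widetilde{\mathfrak{h}})$, so the image of $\overline{\widetilde{\partial}}$ is exactly $\ker(\widetilde{\mathfrak{h}})$. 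Third, surjectivity of $\widetilde{\mathfrak{h}}$ (explicitly stated in Theorem 7.3) gives exactness at the right end. Assembling these three facts yields the claimed short exact sequence.

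There is essentially no obstacle here beyond checking that the maps appearing in Theorem 7.3 are $R$-linear, which is clear from their explicit formulas: $\widetilde{\partial}(K_*)=q^{-2}K_+-K_-$ is by definition an $R$-homomorphism, $\widetilde{\mathfrak{h}}$ is induced by the homotopy class map twisted by the local system and hence $R$-linear on the level of $H_0$, and $\widetilde{\mu}$ is by construction a homomorphism of $R$-modules on twisted homology. Thus the corollary is a purely formal consequence of Theorem 7.3 and requires no additional geometric input.
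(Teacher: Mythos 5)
Your proof is correct and is essentially the paper's own argument: the corollary is presented there as an immediate formal consequence of the exact sequence in Theorem 7.3, obtained exactly by the truncation you describe (quotient by $\mathrm{im}(\widetilde{\mu})$ on the left, use surjectivity of $\widetilde{\mathfrak{h}}$ on the right). No further comment is needed.
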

This sequence is \textit{never} split in the category of $R$-modules 
(but in the category
of abelian groups). 
Note that, for a given $K\in \mathcal{L}$, there is a unique $\alpha \in
\mathfrak{b}$ such that for each integer number $n$
$$K-q^{2n}K_{\alpha }\in \textrm{ker}(\widetilde{\mathfrak{h}}).$$
In physics terms this could probably be interpreted as follows: 
\textit{For each classical
state we can choose a quantum state up to a certain phase.}

\vskip .1in

By application of the homotopy Conway map (defined in section 4)
the description of the homomorphism $\tilde{\mu }$ given above
precisely corresponds to the map on paths in \cite{K1}.

\begin{proposition}
Suppose that $M$ is aspherical and atoroidal.
Then 
$${\rm im}(\widetilde{ \mu })\subset
\left(R\mathcal{K}/
(\mathcal{D}\cap R\mathcal{K}) \right) \cap
{\rm ker}(\widetilde{\mathfrak{h}})$$
More precisely, each element in the image
of $\tilde{\mu }$ is represented by a sum of elements of the form
$$q^{n_i}(K_{*,i}-K'_{*,i}),$$
where for fixed $i$ all $K_{*,i},K'_{*,i}$ are immersions in
$\mathcal{K}$ resulting from the same link $K\in
\mathcal{L}$ by introducing a kink in one of its components.
\end{proposition}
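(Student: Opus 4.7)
The plan is to lift the proof of Theorem 6.1 to the Jones-twisted setting by carefully tracking the $q$-powers along each transversal deformation. First I would use the remark preceding the proposition to reduce to classes in the image of the Hurewicz-type surjection $R\pi_1^0(\textrm{imm}) \to H_1(\textrm{imm};\widetilde{R})$, so that it suffices to analyze $\widetilde{\mu}$ on loops of trivial oriented intersection index at chosen basepoints $f_\alpha \in \textrm{emb}$. Then, exactly as in the proof of Theorem 6.1, I would pass to the covering $\widetilde{\textrm{map}} \to \textrm{map}$, use the product decomposition $\pi_1(\textrm{map};f_a) \cong \prod_i \pi_1(\textrm{map}_{a_i};f_{a_i})$, and compose with order-changing loops in $\textrm{emb}$ (which contribute trivially to $\widetilde{\mu}$ and do not alter the trivial-index condition) to reduce to loops whose only nontrivial motion is in a single component, i.e. loops represented by a singular torus map.

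Next I would apply the asphericity and atoroidality hypotheses component by component. If the adjoint torus map is inessential, asphericity gives a homotopy into a tubular neighborhood $T$ of a knot, plus possibly a spherical summand that lies in a boundary collar; if it is essential, atoroidality gives a homotopy into a boundary collar of a torus boundary component $N \cong S^1 \times S^1 \times [0,1]$. In both cases, the homotopy exact sequence of the fibration $\Omega M \to \textrm{map} \to M$ applied to $T$ or $N$ shows that the loop is represented, up to free homotopy in $\textrm{imm}$, by a longitudinal rotation, which can be realized as a sequence of \emph{kink} crossings on the moving component while keeping the remaining components $L$ fixed. This is the crucial point: unlike in the untwisted Theorem 6.1, where kink contributions were simply discarded by projecting to $\mathcal{L}[1]\setminus \mathcal{K}[1]$, here the kinks are exactly the terms we must keep, but we must verify that the resulting transversal loop meets the discriminant only along $\mathcal{K}$.

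Finally I would track the $q$-powers using the explicit formula $\widetilde{\mu}(s) = \sum_i \epsilon_i q^{2(\epsilon_1+\cdots +\epsilon_{i-1})+\epsilon_i} K_{*,i}$. Since the trivial-index condition forces the total exponent of $q$ on the loop to vanish, and since two successive kink crossings $K_{*,i}, K'_{*,i}$ obtained from the same underlying link $K$ by inserting a kink into the same component carry identical prefixes of partial intersection indices (the intermediate path lies in $\textrm{emb}$), the individual monomials $q^{n_i} K_{*,i}$ pair up modulo $\mathcal{D}\cap R\mathcal{K}$ into differences $q^{n_i}(K_{*,i} - K'_{*,i})$ of the desired form. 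The main obstacle I anticipate is precisely this third step: showing that after the homotopies produced by asphericity and atoroidality one can arrange the surviving kink crossings to cancel in pairs, rather than contributing as isolated $q^n K_*$ terms. This requires a careful bookkeeping argument, using the triviality of the index along the full loop together with the fact that kink insertions on parallel copies of a single component (which exist by the symmetry built into our choice of basepoints $f_a$) can always be paired off by further compositions with loops in $\textrm{emb}$.
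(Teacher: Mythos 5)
Your proposal follows essentially the same route as the paper's proof: reduce via the index-zero remark (the surjection $R\pi_1^0(\textrm{imm})\rightarrow H_1(\textrm{imm};\widetilde{R})$) to based transversal loops of trivial index, use the aspherical/atoroidal analysis underlying Theorem 6.1 to reduce such a loop to a composition of kink crossings, and then pair the kink terms into differences $q^{n_i}(K_{*,i}-K'_{*,i})$ using the vanishing total intersection index together with the explicit $q$-power formula for $\widetilde{\mu}$. The one point where the paper is more careful than your sketch is the basepoint issue: since $\widetilde{\mu}$ does not factor through free homotopy classes of loops, the paper decomposes the loop into conjugated kink loops, slides each one to the basepoint embedding by a based homotopy in $\textrm{imm}$ and reorders crossings modulo $\widetilde{\mathcal{D}}$, whereas your appeal to free homotopy (and to pairing "by further compositions with loops in $\textrm{emb}$") needs the additional, easy but necessary, observation that for an index-zero loop a free homotopy changes $\widetilde{\mu}$ only by a global power of $q$, which preserves the asserted form.
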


\begin{proof}
First define homomorphisms for each $\alpha \in \mathfrak{b}$
$$\widetilde{\mu}_{\alpha }: \pi_1(\textrm{imm};f_{\alpha })\rightarrow R\mathcal{L}[1]/\widetilde{\mathcal{D}}$$
using the explicit formula given above. Note that the resulting
homomorphism does not factor through a map defined on the set of
free homotopy classes
of loops in $\textrm{imm}$. Now assume we have given a loop $\rho $ in
$\textrm{imm} $ with index $\varepsilon(\rho )=0$. It is easy to see that there is 
a homotopy to a product
$$\prod_{1\leq i\leq n}\ell _i \rho_i \ell _i^{-1}.$$ 
Here $\ell _i$ is a transverse arc joining $\rho (*)\in \textrm{emb}$
(representing a link $K\in \mathcal{L}$) to the 
basepoint $*_i$ on
$\rho_i$, and each $\rho_i$ is a kink loop (representing $\gamma (K_*)$ for some $K_*$ in $\mathcal{L}[1]$). 
Now each path $\ell_i \rho_i \ell_i^{-1}$ is homotopic inside $\textrm{imm}$ 
to a kink path for the basepoint embedding. Note that the order of
crossing changes can be altered up to $\widetilde{\mathcal{D}}$. This
allows to inductively change each of the paths. 
Thus finally we have a composition of kink loops for the embedding 
$*$ with vanishing 
index because the index is homotopy invariant. This proves the claim. 
Note that the kinks
can still be contained in different components of $K$.
\end{proof}

\section{The framing local systems and its homology sequences}

In this section we use a local system of coefficients
$R'$
on $\textrm{imm}[k]$ naturally related to the study of 
framed links in $M$. The geometric
interpretation of the corresponding homology exact sequence is more delicate than the discussion in the previous section. Note that $\textrm{emb}[0]$ is just the usual space of embeddings 
of oriented links in $M$ with $\mathcal{L}[0]$ the set of isotopy classes.

\vskip .1in

The following extends the set-up discussed in \cite{K3}.
A \textit{total framing} of $f\in \widetilde{\textrm{imm}[0]}$ is 
a choice of equivalence class $[v]$ of a normal vectorfield of $f$
(section of the normal bundle), where we will have $v_1\equiv v_2$ if $v_1$ is
homotopic to $v_2$, or if $v_1$ differs from $v_2$ by twisting the
framings of components of $f$ such that the total number of those
twists add up to zero. We use the natural projections
$$\widetilde{\textrm{imm}}(j)\times C_{2k}(\cup_jS^1)\rightarrow \widetilde{\textrm{imm}}(j)$$
to define the total framing of $f\in \widetilde{\textrm{imm}[k]}$ for $k\geq 1$.
Obviously the set of total framings is in $1-1$
correspondence with the integer numbers. 
The total framings are defined for unordered immersions
by dividing by the actions of symmetric groups in the obvious way.
Now for each $f\in \textrm{imm}[k]$ let $R'_f$
denote the free abelian group gnerated by all total framings on
$f$. Then $R'_f\cong \mathbb{Z}[q^{\pm 1}]=R$ with $q^i$ and $q^j$
indicating two framings of $f$, which differ by $j-i$ twists.

\vskip .1in

Then a local system $R_{\mathfrak{f}}$ on 
$(\textrm{imm},\textrm{emb})$ with bundle of groups given by 
$$\bigcup_{f\in \textrm{imm}}R_f$$
is defined from the collection of homomorphisms, for $f\in \textrm{emb}$:
$$\delta_{\mathfrak{f}}: \pi_1(\textrm{imm};f)\rightarrow \textrm{Aut}(R_f)$$
by assigning to a loop $\gamma $ in $f$ the multiplication by
$q^{\delta '_{\mathfrak{f}}(\gamma )}$. Here 
$\delta '_{\mathfrak{f}}(\gamma )\in
\mathbb{Z}$ is the sum of $2(\varepsilon\circ H)(\gamma )$ and
the framing change
induced by the loop. It is shown in \cite{K3}, using Lin transversality, 
that the homomorphism is well-defined and thus represents a local
system $R_{\mathfrak{f}}$ on $\textrm{imm}[0]$. Moreover, the homology module
$H_0(\textrm{imm}[0];R_{\mathfrak{f}})$ is isomorphic to the skein module
$\mathcal{S}_{\mathfrak{f}}$
defined by dividing $R\mathcal{L}_{\mathfrak{f}}[0]$ by the 
submodule generated by
all elements $q^{-1}K_+-qK_-$ and $q^{-1}K^{(+)}-K$ for $K,K_{\pm }\in \mathcal{L}_{\mathfrak{t}}[0]$. 
Here $\mathcal{L}_{\mathfrak{f}}[0]$ is the usual set of
isotopy classes of framed oriented links and for each framed oriented 
link $K$ the framed link $K^{(+)}$ is the framed link 
defined from $K$ by introducing a 
positive twist into the framing of any of its components.    
The correspondingly defined set of isotopy classes of 
totally framed singular links, equipped with $k$ self-intersections, in $M$ will be
denoted $\mathcal{L}_{\mathfrak{t}}[k]$ and similarly we have the singular framed links $\mathcal{L}_{\mathfrak{f}}[k]$. 

\begin{remark}
The local system above admits 
an interpretation in terms of   
the following general construction: Let $\tilde
X\rightarrow X$ be a (not necessarily regular) covering space over
a connected space $X$. Let $F$ be the fiber over the basepoint $*\in
X$ and let $\pi_1(X;*)\rightarrow \textrm{homeo}(F)$ be the associated
monodromy map. Then there is the natural map
$$\textrm{homeo}(F)\rightarrow \textrm{Aut}(\mathbb{Z}F),$$
where $\textrm{homeo}(F)$ is the group of homeomorphisms of $F$. 
So we can form the associated covering space with fiber $\mathbb{Z}F$
and define the \textit{induced} monodromy by composition in the
obvious way. Now assume that $F$ is a group acting on itself 
by translation. Thus we have a natural map $F\rightarrow \textrm{homeo}(F)$.
Assume that, using a stratification of $X$,
there is defined a second monodromy map:
$$\pi_1(X;*)\rightarrow F\rightarrow \textrm{homeo}(F).$$
The two monodromies into $\textrm{homeo}(F)$ 
can be multiplied, and then mapped to a 
homomorphism into $\textrm{Aut}(\mathbb{Z}F)$
as above.
The fiber of the induced covering is the abelian group $\mathbb{Z}F$
and the monodromy defines a local system, see \cite{W}.
\end{remark}

\begin{lemma} There are the following isomorphisms
$$H_0({\rm imm};R_{\mathfrak{f}})\cong R\mathcal{L}_{\mathfrak{t}}/(q^{-1}K^{(+)}-K)\cong
R\mathcal{L}_{\mathfrak{f}}/(q^{-1}K^{(+)}-K)\cong
\mathbb{Z}\mathcal{L}_{\mathfrak{t}}.$$
The right hand isomorphism is an isomorphism of abelian groups.
\end{lemma}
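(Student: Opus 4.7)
The plan is to prove the three isomorphisms in turn, treating the first as the substantive topological input and the remaining two as essentially algebraic bookkeeping.

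For $H_0(\textrm{imm};R_\mathfrak{f})\cong R\mathcal{L}_\mathfrak{t}/(q^{-1}K^{(+)}-K)$, I would first define a candidate map from $R\mathcal{L}_\mathfrak{t}$ by sending a totally framed isotopy class $(K,n)$ to $[f,q^n]$, where $f\in\textrm{emb}$ represents $K$ carrying a total framing corresponding to $q^n\in R_f$. This descends to $\mathcal{L}_\mathfrak{t}$ because a framed isotopy $f\to g$ gives a $1$-simplex in $\textrm{emb}$ whose parallel transport in $R_\mathfrak{f}$ exactly compensates the framing change, while the indeterminacy in redistributing twists among components—the ambiguity used to pass from $\mathcal{L}_\mathfrak{f}$ to $\mathcal{L}_\mathfrak{t}$—is invisible to the total-framing local system. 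Next I would verify that the element $q^{-1}K^{(+)}-K$ lies in the kernel by invoking a kink loop $\gamma(K_*)$ of the kind introduced in Section 6: based at a representative of $K$, it runs through a crossing change at a kink singularity and closes via the natural isotopy back. The isotopy leg inserts or removes a single framing twist, while the discriminant leg contributes a signed $2\epsilon$ to $\delta'_\mathfrak{f}$; a careful bookkeeping of signs gives net monodromy $q^{\pm 1}$, which on the nose produces the identity $[f^{(+)},q^n]=q\cdot[f,q^n]$ in $H_0$.

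The hard step is injectivity of the resulting map $R\mathcal{L}_\mathfrak{t}/(q^{-1}K^{(+)}-K)\to H_0(\textrm{imm};R_\mathfrak{f})$. Surjectivity is immediate because the inclusion of singular links in all immersions is a weak homotopy equivalence by the result of \cite{B} quoted in Section 3, so every generator of $H_0$ can be represented by a framed embedding. For injectivity, a general $1$-simplex $\gamma:I\to\textrm{imm}$ between two framed embeddings can be approximated by a transversal path meeting the discriminant at finitely many generic points $t_1<\cdots<t_r$; the resulting telescoping identification in $H_0$ then factors into a composition of elementary moves of exactly two kinds, namely framed isotopies inside $\textrm{emb}$, which are built into the definition of $\mathcal{L}_\mathfrak{t}$, and elementary kink-type crossing changes, which are built into the relation $q^{-1}K^{(+)}=K$. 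This is the main obstacle: one must check that no further relations arise, i.e.\ that every relative $1$-cycle with coefficients in $R_\mathfrak{f}$ decomposes in this manner, which is exactly the transversality argument already carried out in the framed setting of \cite{K3}.

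The remaining two isomorphisms are purely algebraic. The forgetful surjection $R\mathcal{L}_\mathfrak{f}\to R\mathcal{L}_\mathfrak{t}$ keeping only total framings descends to the quotients; its kernel on $R\mathcal{L}_\mathfrak{f}$ is generated by differences of framed links obtained by moving a single twist between two components, and applying $q^{-1}K^{(+)}=K$ once on the component that gains the twist and once on the component that loses it exhibits any such difference as an element of the relation subgroup. This gives the middle isomorphism. Finally, in $R\mathcal{L}_\mathfrak{t}/(q^{-1}K^{(+)}-K)$ the identity $qK=K^{(+)}$ absorbs every power of $q$ into a framing shift, collapsing the $R$-action to the $\mathbb{Z}$-action of shifting total framings; the underlying abelian group is then freely generated by the totally framed isotopy classes themselves, giving $\mathbb{Z}\mathcal{L}_\mathfrak{t}$.
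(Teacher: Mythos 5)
Your two ``purely algebraic'' steps are fine and essentially coincide with what the paper needs: redistribution of twists among components is absorbed by $q^{-1}K^{(+)}-K$, and $q$ then acts as the twist shift already recorded in $\mathcal{L}_{\mathfrak{t}}$, giving $R\mathcal{L}_{\mathfrak{f}}/(q^{-1}K^{(+)}-K)\cong R\mathcal{L}_{\mathfrak{t}}/(q^{-1}K^{(+)}-K)\cong\mathbb{Z}\mathcal{L}_{\mathfrak{t}}$. The problem is the first isomorphism, where you take the symbol $\textrm{imm}$ in the statement at face value. For the space of all immersions the claimed isomorphism is false, and your injectivity step is exactly where it breaks: a transversal path in $\textrm{imm}$ meets the discriminant in \emph{arbitrary} crossing changes, not only kink-type ones, and each such crossing identifies $[K_+]$ with a unit multiple of $[K_-]$ in $H_0(\textrm{imm};R_{\mathfrak{f}})$. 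Since any two knots in $S^3$ (say) are related by a finite sequence of crossing changes, the map $\mathbb{Z}\mathcal{L}_{\mathfrak{t}}\rightarrow H_0(\textrm{imm};R_{\mathfrak{f}})$ cannot be injective. Your own citation works against you here: the result of \cite{K3} quoted immediately before the lemma says that $H_0(\textrm{imm}[0];R_{\mathfrak{f}})$ is the skein module $\mathcal{S}_{\mathfrak{f}}$ with \emph{both} families of relations $q^{-1}K_+-qK_-$ and $q^{-1}K^{(+)}-K$, not the free module $\mathbb{Z}\mathcal{L}_{\mathfrak{t}}$. So the claim that ``no further relations arise'' is not a technical gap to be filled by transversality; it is simply wrong for $\textrm{imm}$.

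What the lemma is actually about (and how it is used in the proof of Theorem 8.1, where it identifies the term $H_0(\textrm{emb};R_{\mathfrak{f}})$ in the exact sequence, while $\mathcal{S}_{\mathfrak{f}}=H_0(\textrm{imm};R_{\mathfrak{f}})$ appears separately) is the subspace $\textrm{emb}$ of embeddings; the occurrence of $\textrm{imm}$ in the statement should be read as $\textrm{emb}$. Over $\textrm{emb}$ there are no discriminant crossings at all, so no kink-loop computation is needed: the paper's proof decomposes $H_0(\textrm{emb};R_{\mathfrak{f}})\cong\bigoplus_K H_0(\textrm{emb}_K;R_{\mathfrak{f}})$ over isotopy classes and invokes the monodromy description of $H_0$ with local coefficients, i.e.\ each summand is the group of coinvariants of the fiber $R_f$ (the free abelian group on total framings of $f$, with $q$ acting as a positive twist) under the monodromy of loops of embeddings, which is exactly the framing change; the coinvariants are then the free abelian group on totally framed isotopy classes over $K$, and summing gives $\mathbb{Z}\mathcal{L}_{\mathfrak{t}}$, after which your two algebraic identifications apply verbatim. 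In that corrected setting the relation $q^{-1}K^{(+)}=K$ is a tautology of the identification ``multiplication by $q$ = positive twist,'' not something extracted from a kink loop (which in any case leaves $\textrm{emb}$; its trivial monodromy is the consistency check for the local system, already used in Proposition 8.1).
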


\begin{proof}
The fiber $R_f$ over 
a given $k$-embedding $f$ is the free abelian
group generated by total framings of $f$.
$R_f$ is identified with $R$ by a choice of total framing of $f$.
Multiplication by $q$ corresponds to a positive twist.
Note that 
$$H_0(\textrm{emb};R_{\mathfrak{f}})\cong \bigoplus _{K\in 
\mathcal{L}}H_0(\textrm{emb}_{K},R_{\mathfrak{f}}),$$
where $\textrm{emb}_K$ is the set of embeddings in the isotopy class $K\in \mathcal{L}_{\mathfrak{t}}$.
Recall that by definition $\textrm{emb}_K$ is the $\textrm{Diff}(M,\textrm{id})$-orbit of a representative
$k$-embedding $f$. Here
$\textrm{Diff}(M,\textrm{id})$ is the group of
diffeomorphisms of $M$, which are isotopic to the identity. 
Then each element of $H_0(\textrm{emb}_K;R_{\mathfrak{f}})$  is represented by an 
element in the fiber over some representative embedding of the isotopy 
class.    
Now the result follows from the definitions of 
$\delta_{\mathfrak{f}}$
and $\mathcal{L}_{\mathfrak{t}}$ and the monodromy interpretation of
$0$-dimensional homology for local systems \cite{W}. 
\end{proof}

We use the notation 
$\mathcal{F}[k]:=R\mathcal{L}_{\mathfrak{t}}[k]/(q^{-1}K^{(+)}-K)$ for $k\geq 0$, and as usual
$\mathcal{F}:=\cup_k\mathcal{F}[k]$.
  
\begin{theorem}
The homology exact sequence for the local system $R_{\mathfrak{f}}$ on the pair
$({\rm imm},{\rm emb})$ is isomorphic with the exact sequence:
$$
\begin{CD}
H_1({\rm emb};R_{\mathfrak{f}})\rightarrow H_1({\rm imm};R_{\mathfrak{f}})@>\mu _{\mathfrak{f}}>>
\mathcal{F}[+1]/\mathcal{D}_{\mathfrak{t}} 
\end{CD}
$$
$$
\begin{CD} 
@>\partial _{\mathfrak{f}}>> 
\mathcal{F}
@>\mathfrak{h}_{\mathfrak{f}}>> \mathcal{S}_{\mathfrak{f}}\rightarrow 0
\end{CD} 
$$
\vskip .1in
\noindent The submodule $\mathcal{D}_{\mathfrak{t}}$, 
the homomorphisms $\mu_{\mathfrak{f}}$, 
$\partial_{\mathfrak{f}}$ and $\mathfrak{h}_{\mathfrak{f}}$ are defined as before
by replacing $\mathcal{L}$ with $\mathcal{L}_{\mathfrak{t}}$. In the case of  
$\mu_{\mathfrak{f}}$ one has to consider the transportation of framings.
\end{theorem}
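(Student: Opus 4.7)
The plan is to follow the template established in the proof of Theorem 4.1 and its Jones-twisted analogue Theorem 7.3, but now with coefficients in the framing local system $R_{\mathfrak{f}}$ instead of the trivial or Jones system. First I would write out the long exact homology sequence of the pair $(\textrm{imm},\textrm{emb})$ with coefficients in $R_{\mathfrak{f}}$,
$$H_1(\textrm{emb};R_{\mathfrak{f}})\to H_1(\textrm{imm};R_{\mathfrak{f}})\to H_1(\textrm{imm},\textrm{emb};R_{\mathfrak{f}})\to H_0(\textrm{emb};R_{\mathfrak{f}})\to H_0(\textrm{imm};R_{\mathfrak{f}})\to 0,$$
and identify the right half using Lemma 8.1 together with the definition of $\mathcal{S}_{\mathfrak{f}}$. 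Lemma 8.1 already gives $H_0(\textrm{emb};R_{\mathfrak{f}})\cong\mathcal{F}$, and the passage from $\textrm{emb}$ to $\textrm{imm}$ is effected exactly by imposing the monodromy relations coming from transversal arcs in $\textrm{imm}$ that cross the discriminant once, which after the $q^{-1}K^{(+)}=K$ identification built into $\mathcal{F}$ are the skein relations $q^{-1}K_+-qK_-$; thus $H_0(\textrm{imm};R_{\mathfrak{f}})\cong \mathcal{S}_{\mathfrak{f}}$ and the last map in the sequence is $\mathfrak{h}_{\mathfrak{f}}$.

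The main task is the framed analogue of Proposition 4.1, namely the identification $H_1(\textrm{imm},\textrm{emb};R_{\mathfrak{f}})\cong \mathcal{F}[+1]/\mathcal{D}_{\mathfrak{t}}$. By Lemma 3.2 each relative $1$-cycle admits a transversal representative whose $1$-simplices map into $\textrm{emb}[k]\cup \textrm{imm}[k,1]$ with boundary in $\textrm{emb}[k]$; lifting to $R_{\mathfrak{f}}$ amounts to fixing a parallel transport of framings along each simplex, and each codimension-$1$ crossing at a parameter $t_i$ contributes the corresponding framed singular link, signed by the co-orientation of the discriminant and weighted by the accumulated framing shift up to $t_i$. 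This produces a well-defined element of $\mathcal{F}[+1]$. To compute the effect of relative boundaries, I would perturb any mapping $F\to\textrm{imm}[k]$ of an oriented surface with boundary in $\textrm{emb}[k]$ so that its preimage of $\textrm{imm}[k]\setminus\textrm{emb}[k]$ is a $1$-complex with vertices of valence $2$ or $4$ in the interior mapping into $\textrm{imm}[k,2]$: monodromy around such a vertex yields exactly the differentiability, $4T$ and tangency relations of Theorem 3.2, transported as framed links, whose collection generates $\mathcal{D}_{\mathfrak{t}}$.

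Once both identifications are in place, the connecting homomorphism becomes the Jones-type framed boundary $\partial_{\mathfrak{f}}(q^aK_*)=q^a(q^{-2}K_+-K_-)$, which vanishes on the $\mathcal{D}_{\mathfrak{t}}$-relations exactly as in the unframed case, and $\mu_{\mathfrak{f}}$ is read off from the same transversal formula that defined $\widetilde{\mu}$ in Section 7, except that the weight at the $i$-th crossing now includes the total framing shift carried along the loop up to $t_i$, not merely the discriminant intersection count. The main obstacle is precisely this bookkeeping of framings along transversal $1$-simplices and across codimension-$2$ degeneracies: one must verify that the assignment is independent of the choice of transversal representative and of parallel transport. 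Well-definedness of $\delta_{\mathfrak{f}}$ from \cite{K3} handles the loop case, and extending it to transversal arcs with endpoints in $\textrm{emb}$ requires checking that the three local models for codimension-$2$ strata from Theorem 3.2 carry the framing consistently, which is exactly what is packaged into the generators of $\mathcal{D}_{\mathfrak{t}}$. Given this, exactness in the middle follows formally from exactness of the long exact sequence with local coefficients.
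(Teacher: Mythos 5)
Your proposal is correct and follows essentially the same route as the paper: the paper's proof likewise runs the long exact sequence of $(\mathrm{imm},\mathrm{emb})$ with $R_{\mathfrak{f}}$-coefficients, invokes Lin transversality together with transport of framings to identify the relative term as the codimension-one stratum modulo the monodromy relations around the codimension-two strata (the framed analogue of Proposition 4.1), and then applies Lemma 8.1 and the identification of $H_0(\mathrm{imm};R_{\mathfrak{f}})$ with $\mathcal{S}_{\mathfrak{f}}$ from \cite{K2}, \cite{K3}. You simply spell out the transversality and framing bookkeeping that the paper compresses into a reference to the earlier arguments.
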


\begin{proof}
Using Lin transversality and transportation of framings as described above we get the exact sequence
$$
\begin{CD}
H_1({\rm emb};R_{\mathfrak{f}})\rightarrow H_1({\rm imm};R_{\mathfrak{f}})\rightarrow
H_0({\rm imm}[+1];R_{\mathfrak{f}})/\partial '(H_0({\rm imm}[+2];R_{\mathfrak{f}}))
\end{CD}
$$
$$
\begin{CD} 
\rightarrow 
H_0({\rm emb};R_{\mathfrak{f}}) \rightarrow \mathcal{S}_{\mathfrak{f}}\rightarrow 0
\end{CD} 
$$
and then apply Lemma 8.1.
Note that 
$\mathcal{S}_{\mathfrak{f}}$ is isomorphic to the quotient of
$R\mathcal{L}_{\mathfrak{t}}$
by the submodules generated by elements $q^{-1}K^{(+)}-K$
and $q^{-1}K_+-qK_-$ (see \cite{K2}).  
\end{proof}

\begin{remark}
It is interesting to note that the exact sequence above maps into the
sequence of Theorem 4.1 under the the coefficient homomorphism
$R\rightarrow \mathbb{Z}$, which maps $q$ to $1$.
Then the coefficient system becomes trivial and 
$\mathcal{F}[k]$ maps onto 
$\mathbb{Z}\mathcal{L}[k]$.
In this way Theorem 8.1
can be interpreted as a \textit{framing quantization} of Theorem 4.1.
\end{remark}

\begin{corollary}
There is the short exact sequence of
$R$-modules 
$$
\begin{CD}
0\rightarrow \left( \mathcal{F}[+1]/\mathcal{D}_{\mathfrak{t}} \right)/{\rm im}(\mu
_{\mathfrak{f}} )\rightarrow \mathcal{F}\rightarrow 
\mathcal{S}_{\mathfrak{f}}\rightarrow
0
\end{CD}
$$
\end{corollary}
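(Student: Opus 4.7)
The plan is to derive this short exact sequence as an immediate consequence of the five-term exact sequence from Theorem 8.1, by truncating it at the appropriate places. Specifically, from Theorem 8.1 we already have exactness of
$$H_1(\mathrm{imm};R_{\mathfrak{f}}) \xrightarrow{\mu_{\mathfrak{f}}} \mathcal{F}[+1]/\mathcal{D}_{\mathfrak{t}} \xrightarrow{\partial_{\mathfrak{f}}} \mathcal{F} \xrightarrow{\mathfrak{h}_{\mathfrak{f}}} \mathcal{S}_{\mathfrak{f}} \to 0.$$
The first step is to observe that exactness at $\mathcal{F}[+1]/\mathcal{D}_{\mathfrak{t}}$ says precisely $\ker(\partial_{\mathfrak{f}}) = \mathrm{im}(\mu_{\mathfrak{f}})$, so $\partial_{\mathfrak{f}}$ factors through an \emph{injective} $R$-homomorphism
$$\bar{\partial}_{\mathfrak{f}}:\ \bigl(\mathcal{F}[+1]/\mathcal{D}_{\mathfrak{t}}\bigr)/\mathrm{im}(\mu_{\mathfrak{f}}) \hookrightarrow \mathcal{F}.$$
This gives the required injectivity at the left-hand end of the proposed short exact sequence.

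Next I would verify the middle exactness, i.e.\ $\mathrm{im}(\bar{\partial}_{\mathfrak{f}}) = \ker(\mathfrak{h}_{\mathfrak{f}})$. But $\mathrm{im}(\bar{\partial}_{\mathfrak{f}}) = \mathrm{im}(\partial_{\mathfrak{f}})$ tautologically, and the equality $\mathrm{im}(\partial_{\mathfrak{f}}) = \ker(\mathfrak{h}_{\mathfrak{f}})$ is exactness at $\mathcal{F}$ in Theorem 8.1. Finally, surjectivity of $\mathfrak{h}_{\mathfrak{f}}$ onto $\mathcal{S}_{\mathfrak{f}}$ is already stated as exactness at $\mathcal{S}_{\mathfrak{f}}$ in the same theorem (it reflects that every class in the skein module is represented by a framed link).

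Assembling the three pieces, one obtains
$$0 \to \bigl(\mathcal{F}[+1]/\mathcal{D}_{\mathfrak{t}}\bigr)/\mathrm{im}(\mu_{\mathfrak{f}}) \xrightarrow{\bar{\partial}_{\mathfrak{f}}} \mathcal{F} \xrightarrow{\mathfrak{h}_{\mathfrak{f}}} \mathcal{S}_{\mathfrak{f}} \to 0,$$
which is the asserted short exact sequence of $R$-modules. There is essentially no obstacle here: all the work has been done in the identification of the terms of the topological long exact sequence with the algebraic objects $\mathcal{F}, \mathcal{F}[+1]/\mathcal{D}_{\mathfrak{t}}, \mathcal{S}_{\mathfrak{f}}$ carried out in Theorem 8.1 and Lemma 8.1. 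The only subtlety worth mentioning in writing is to check that all induced maps are $R$-linear (not just $\mathbb{Z}$-linear), which is automatic since $\mu_{\mathfrak{f}}$, $\partial_{\mathfrak{f}}$ and $\mathfrak{h}_{\mathfrak{f}}$ come from the long exact sequence of a pair with local coefficients in $R_{\mathfrak{f}}$ and hence are $R$-module homomorphisms.
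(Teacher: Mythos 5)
Your proposal is correct and is exactly the argument the paper intends: the corollary is obtained by truncating the five-term exact sequence of Theorem 8.1, factoring $\partial_{\mathfrak{f}}$ through the quotient by $\mathrm{im}(\mu_{\mathfrak{f}})$ to get injectivity, with middle exactness and surjectivity of $\mathfrak{h}_{\mathfrak{f}}$ read off directly. The paper treats this as immediate and gives no separate proof, so there is nothing to add beyond your (correct) remark that all maps are $R$-linear since they come from the long exact sequence with coefficients in the local system $R_{\mathfrak{f}}$.
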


It is known that the
homomorphism $\mathfrak{h}_{\mathfrak{f}}$  
splits in the category of $R$-modules if and
only if each mapping $S^1\times S^1\rightarrow M$ is homotopic into
$\partial M$.

\begin{theorem} \textup{(i)} There is the isomorphism of $R$-modules
$$\mathcal{F}[0] \cong \bigoplus_{\alpha \in
\mathfrak{b}[0]}R(\mathfrak{h}[0]^{-1}(\alpha ))/(q^{2\varepsilon_0(\alpha
)}-1),$$
where for $k\geq 1$
$\mathfrak{h}[k]:  \mathcal{L}[k] \rightarrow \mathfrak{b}[k]$
is the map taking homotopy classes of components.
The index $\varepsilon_{0}(\alpha )\in \mathbb{N}$ is defined by the
the 
$gcd$ over
all absolute values of total
oriented intersection numbers with $2$-spheres in $M$, see \cite{K3} for details.

\noindent \textup{(ii)} There is the isomorphism of $R$-modules
$$\mathcal{S}_{\mathfrak{f}}\cong \bigoplus_{\alpha \in
\mathfrak{b}}R/(q^{2\varepsilon (\alpha )}-1),$$
where $\varepsilon (\alpha )$ is the $gcd$  
of absolute values of intersection numbers of singular tori (defined 
by sweeping a
component $\alpha_i$ through $M$) with a link realizing $\alpha $.
\end{theorem}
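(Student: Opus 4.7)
The plan is to treat the two parts by the same principle but with progressively more identifications allowed. For part (i), I start by stratifying $\mathcal{L}_{\mathfrak{t}}[0]$ via the forgetful maps to $\mathcal{L}[0]$ and then to $\mathfrak{b}[0]$. A totally framed isotopy class over a fixed $K\in\mathcal{L}[0]$ is a class of pairs $(K,n)$ with $n\in\mathbb{Z}$ the total framing, modulo the subgroup $N(K)\mathbb{Z}$ of framing shifts realised by self-isotopies of $K$. So
$$\mathcal{L}_{\mathfrak{t}}[0]=\bigsqcup_{\alpha\in\mathfrak{b}[0]}\bigsqcup_{K\in\mathfrak{h}[0]^{-1}(\alpha)}\mathbb{Z}/N(K)\mathbb{Z}.$$
Passing to $\mathcal{F}[0]=R\mathcal{L}_{\mathfrak{t}}[0]/(q^{-1}K^{(+)}-K)$, the generator of each $\mathbb{Z}/N(K)\mathbb{Z}$ fiber becomes multiplication by $q$, so the contribution of each $K$ is the cyclic $R$-module $R/(q^{N(K)}-1)$.

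The remaining content is the identification $N(K)=2\varepsilon_0(\alpha)$. As in \cite{K3}, self-isotopies of $K$ that change the total framing are generated by sweeping a component through an essential $2$-sphere in $M$; such a sweep contributes a framing change equal to $\pm 2$ times the signed intersection number of the sphere with the swept component, the factor of $2$ arising because the isotopy crosses the sphere on both sides. Taking gcds over all essential spheres and all components yields a number that depends only on the homotopy class $\alpha$, and is $\varepsilon_0(\alpha)$ by definition. Hence $N(K)=2\varepsilon_0(\alpha)$ independently of the chosen lift $K$ of $\alpha$, which gives (i).

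For part (ii), I use Corollary 8.2 together with (i): $\mathcal{S}_{\mathfrak{f}}$ is $\mathcal{F}$ modulo the image of $\partial_{\mathfrak{f}}$, and the crossing-change relation $q^{-1}K_+-qK_-$ in $\mathcal{F}$ identifies any two isotopy classes in $\mathfrak{h}^{-1}(\alpha)$ up to a power of $q^2$, because a finite sequence of crossing changes connects any two representatives of $\alpha$ and each change contributes a factor $q^{\pm 2}$. Therefore the inner direct sum $\bigoplus_{K\in\mathfrak{h}^{-1}(\alpha)} R/(q^{2\varepsilon_0(\alpha)}-1)$ collapses to a single copy of $R$ modulo further relations. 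Those further relations come from $\mathrm{im}(\mu_{\mathfrak{f}})$: the relevant $1$-cycles in $\textrm{imm}[0]$ with trivial $\widetilde{\mathfrak{h}}$-image are, as in the proof of Theorem 6.1, generated by loops in the component labelled by $\alpha$ that sweep a single component through a singular torus in $M$. With the framing local system $R_{\mathfrak{f}}$ in place, each such loop contributes a relation $q^{2m}-1$, where $m$ is the signed intersection number of the torus with the component being swept (again with an overall factor $2$ from the framing monodromy). Taking gcds gives exactly the relation $q^{2\varepsilon(\alpha)}-1$, which yields (ii).

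The main obstacle is the careful bookkeeping of the factor $2$ and of the compatibility between the local system $R_{\mathfrak{f}}$ and the boundary map $\partial_{\mathfrak{f}}$, so as to verify that no additional collapses occur beyond those recorded by $\varepsilon_0(\alpha)$ in (i) and $\varepsilon(\alpha)$ in (ii). Concretely, one has to check that the monodromy description of $R_{\mathfrak{f}}$ restricted to the various loop families (kink loops, crossing-change loops, sphere and torus sweeps) produces precisely the listed relations and that Lin transversality suffices to rule out contributions from higher-codimension strata. Once this is done, both decompositions follow at once.
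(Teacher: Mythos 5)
Your skeleton is the right one, and it is essentially the approach of the sources the paper itself defers to (the paper's proof of this theorem is a one-line citation of Chernov \cite{Che} and of \cite{K2}, \cite{K3}): fiber $\mathcal{L}_{\mathfrak{t}}[0]$ over $\mathcal{L}[0]$, identify each fiber as $\mathbb{Z}/N(K)$ so that each unframed class contributes $R/(q^{N(K)}-1)$, and for (ii) divide further by the crossing-change relations and compute the residual monodromy via the exact sequence (which, by the way, is Corollary 8.1, not Corollary 8.2 --- Corollary 8.2 is deduced from the present theorem, so invoking it would be circular). The genuine problem is your identification of the generating loops and of their contributions. In (i) no crossing changes are available, so ``sweeping a single component through a $2$-sphere'' is in general not an isotopy at all: the moving component would have to cross the other components. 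The loops that do exist are ambient sphere-rotation isotopies, and such a loop changes the total framing by twice the \emph{total} algebraic intersection number of the whole link with the sphere; that is exactly why $\varepsilon_0(\alpha)$ is defined through total intersection numbers, not through per-component ones as you assert ``by definition''. Your recipe generates the subgroup spanned by the numbers $2(S\cdot K_i)$, which in general strictly contains the correct subgroup spanned by $2(S\cdot K)$ (take two components meeting $S$ with intersection numbers $+1$ and $-1$), so it imposes identifications that do not hold. The same slip occurs in (ii): when one component is swept around a loop, its crossings with \emph{all} components, its self-crossings and its framing change all contribute, so the exponent of the resulting relation is twice the intersection number of the trace torus with the whole link, as in the statement, not with the swept component. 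For instance in $M=\Sigma\times S^1$ with $K_1=\{x\}\times S^1$ and $K_2=c\times\{pt\}$, sweeping $K_1$ around $\gamma\subset\Sigma$ gives the relation $q^{2(\gamma\cdot c)}K=K$ even though the trace torus meets $K_1$ algebraically zero times; your formula would predict no relation.

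Beyond the bookkeeping, the hard half of the theorem --- that these loop families generate the \emph{entire} monodromy image, so that no further collapses occur --- is not supplied. Appealing to the proof of Theorem 6.1 does not do it: that argument assumes $M$ aspherical and atoroidal and uses those hypotheses to make the essential sphere and torus contributions vanish, whereas here there are no hypotheses on $M$ and precisely those contributions have to be computed. That computation (the analysis of $\pi_1$ of the spaces of embeddings, respectively of maps, and the proof that sphere rotations, kink loops and single-component sweeps exhaust the image up to loops in $\textrm{emb}$) is the content of \cite{Che}, \cite{K2}, \cite{K3}; citing them is legitimate --- the paper does the same --- but then the contributions must be quoted correctly, and as written your sketch reproduces a different, generally false, formula rather than the one in the statement.
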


\begin{proof}
This is just Chernov's result \cite{Che}, see also the author's approach   
in \cite{K2}. It easily extends to the case $k\geq 1$.
\end{proof}

\begin{remark}
By defining suitable torus maps from $2$-spheres as in (i) above it can be
proved that  
$\epsilon_0(\alpha )$ is a multiple of $\epsilon (\alpha )$ (see \cite{K2}).
The homomorphism $\mathfrak{h}_{\mathfrak{f}}$
 thus is defined by composition of
$\mathfrak{h}$
with some obvious projection. (Use that 
$x^{nm}-1=(x^n-1)(1+x^n+x^{2n}+\ldots +x^{nm})$.)
\end{remark}

\begin{corollary} \textup{(i)} Suppose that $M$ does not contain any
non-separating $2$-spheres. Then
$\mathcal{F}[0] \cong R\mathcal{L}[0]$.

\noindent \textup{(ii)} Suppose that each mapping from $S^1\times S^1$ into $M$ is homologous into $\partial M$. 
$\mathcal{S}_{\mathfrak{f}}\cong SR\hat{\pi}$.
\hfill{$\square$}
\end{corollary}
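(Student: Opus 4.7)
The plan is to deduce both parts directly from Theorem 8.2: under each hypothesis the relevant gcd invariant in the statement of the theorem vanishes, so the annihilator $q^{2\varepsilon}-1$ becomes zero in $R$ and every summand in the direct sum decomposition collapses to a free copy of $R$.

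For part (i), my first step would be to observe that if $M$ contains no non-separating $2$-sphere, then every embedded or immersed $2$-sphere in $M$ is null-homologous (in the closed case it bounds one of the complementary components, and in the bounded case one uses that $H_2(M) = 0$ modulo boundary $2$-spheres). Consequently its algebraic intersection number with any closed oriented $1$-cycle vanishes. Hence every total oriented intersection number entering the definition of $\varepsilon_0(\alpha)$ is zero, forcing $\varepsilon_0(\alpha)=0$ and $q^{2\varepsilon_0(\alpha)}-1=0$ in $R$. Each summand $R(\mathfrak{h}[0]^{-1}(\alpha))/(q^{2\varepsilon_0(\alpha)}-1)$ then equals the free $R$-module on $\mathfrak{h}[0]^{-1}(\alpha)$, and assembling over the partition $\mathcal{L}[0] = \bigsqcup_{\alpha \in \mathfrak{b}[0]} \mathfrak{h}[0]^{-1}(\alpha)$ gives $\mathcal{F}[0] \cong R\mathcal{L}[0]$.

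For part (ii), I would argue analogously with the tori: the hypothesis lets one homotope each singular torus $T: S^1 \times S^1 \rightarrow M$ (in particular one obtained by sweeping a component of a representative of $\alpha$ through $M$) into $\partial M$, so $T$ is homologous to a $2$-cycle supported in $\partial M$. Its algebraic intersection number with any link contained in the interior of $M$ is therefore zero, and hence the gcd $\varepsilon(\alpha)$ appearing in the description of $\mathcal{S}_{\mathfrak{f}}$ in Theorem 8.2 vanishes for every $\alpha \in \mathfrak{b}$. The quotient $R/(q^{2\varepsilon(\alpha)}-1)$ reduces to $R$, and summing over $\alpha$ combined with the identification $R\mathfrak{b} \cong SR\hat{\pi}$ (since $\mathfrak{b}$ is the set of monomials in $\hat{\pi}$) finishes the argument.

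The only delicate point is to recall from \cite{K2} and \cite{K3} that $\varepsilon_0(\alpha)$ and $\varepsilon(\alpha)$ are literally the gcds of honest algebraic intersection numbers between closed $1$-cycles and $2$-spheres (respectively between singular tori and links), so that \emph{separation} of $2$-spheres in case (i) and \emph{boundary-parallelism} of tori in case (ii) really annihilate the generating set of the ideal. Once that identification of indices with geometric intersection numbers is in hand, both statements are purely formal consequences of Theorem 8.2.
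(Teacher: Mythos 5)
Your proposal is correct, but it runs along a different track than the paper, at least for part (i). You deduce both statements formally from Theorem 8.2 by showing the indices vanish: no non-separating $2$-spheres forces every intersection number entering $\varepsilon_0(\alpha)$ to be zero, and tori homologous into $\partial M$ force $\varepsilon(\alpha)=0$, so in each case the relation $q^{2\cdot 0}-1=0$ is vacuous and the summands are free. The paper instead proves (i) directly, without ever invoking the indices: it chooses a total framing for each isotopy class, i.e.\ a section $\psi:\mathcal{L}\rightarrow\mathcal{L}_{\mathfrak{t}}$ of the forgetful map $\phi$, observes that the induced $R$-homomorphism onto $\mathcal{F}[0]$ is surjective, and uses Chernov's result that the fibers of $\phi$ are in bijection with $\mathbb{Z}$ (precisely the no-non-separating-sphere situation) to get injectivity, since $\chi(q^nK-K)=0$ would make $\psi(K)$ isotopic to its $n$-fold twist; part (ii) is left as the index-vanishing observation you spell out. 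Your route is more uniform and purely formal given Theorem 8.2 (which the paper itself attributes to Chernov), while the paper's route produces an explicit isomorphism, essentially a preferred framing of each link as a basis, without unwinding the definition of $\varepsilon_0$. Two small cautions on your write-up: the step from ``no non-separating embedded spheres'' to ``all spheres pair trivially with $1$-cycles'' should be justified by noting that a separating sphere is homologous into $\partial M$ (hence has zero algebraic intersection with any interior $1$-cycle) and that spherical homology classes are carried by embedded spheres; your parenthetical claim that $H_2(M)=0$ modulo boundary spheres in the bounded case is neither true in general nor needed. Likewise in (ii) the hypothesis is homological (``homologous into $\partial M$''), which is exactly what the intersection-number argument uses, so you should not phrase it as homotoping the torus into the boundary.
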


\begin{proof}
By choosing total framings we can define a section
$\psi: \mathcal{L}\rightarrow \mathcal{L}_{\mathfrak{t}}$ of the forget map
$\phi :\mathcal{L}_{\mathfrak{t}}\rightarrow \mathcal{L}$. 
The induced homomorphism
$\chi : R\mathcal{L}\rightarrow R\mathcal{L}_{\mathfrak{t}}$ is obviously onto.
It follows from Chernov's \cite{Che} results that the fibers of $\phi $ are in
$1$-correspondence with $\mathbb{Z}$. This implies that $\chi $ is
injective. (If $\chi(q^nK-K)=0$ then $\psi (K)$ and its $n$-fold twist
would be isotopic.)
\end{proof}

\begin{corollary}
Suppose that each map of a torus into $M$ is homologous 
into $\partial M$.
Then the homology exact sequence for the local system $R_{\mathfrak{f}}$ on
$({\rm imm}[0],{\rm emb}[0])$ is isomorphic to the exact sequence:
$$
\begin{CD}
H_1({\rm emb}[0];R) \rightarrow H_1({\rm imm}[0];R )@>\mu_{\mathfrak{f}}>>
R\mathcal{L}[1]/\widetilde{\mathcal{D}} 
\end{CD}
$$
$$
\begin{CD}
@>\partial_{\mathfrak{f}}>>R\mathcal{L}[0]@>\mathfrak{h}_{\mathfrak{f}}>>
SR\hat{\pi }
\rightarrow 0
\end{CD}
$$
\hfill{$\square$}
\end{corollary}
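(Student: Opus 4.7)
The plan is to deduce this corollary as a formal consequence of Theorem 8.1 together with the two structure theorems 8.2(i), 8.2(ii), by showing that the hypothesis forces all the ``framing twist'' quotients to collapse, so that $\mathcal{F}[k]$ is simply $R\mathcal{L}[k]$ and $\mathcal{S}_{\mathfrak{f}}$ is simply $SR\hat{\pi}$. The framework is already in place, so the work is really a translation.

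\textbf{Step 1: Collapsing the framing indices.} First I would unpack the hypothesis. ``Every map $S^1\times S^1 \to M$ is homologous into $\partial M$'' means that for every singular torus $T$ (in particular those arising by sweeping a component $\alpha_i$ of a representing link for $\alpha\in \mathfrak{b}[0]$ through $M$) and every link $L$ representing some class in $\mathfrak{b}[0]$, the algebraic intersection number $T\cdot L$ vanishes. Hence each individual intersection number entering the definition of $\varepsilon(\alpha)$ is $0$, and so $\varepsilon(\alpha)=\gcd\{0,0,\ldots\}=0$ for every $\alpha\in \mathfrak{b}$. By Remark 8.2, $\varepsilon_0(\alpha)$ is a multiple of $\varepsilon(\alpha)$, so $\varepsilon_0(\alpha)=0$ as well. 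Plugging these values into Theorem 8.2 collapses the quotients $R/(q^{2\varepsilon}-1)$ and $R/(q^{2\varepsilon_0}-1)$ to $R$, so
\[
\mathcal{F}[0]\cong R\mathcal{L}[0],\qquad \mathcal{S}_{\mathfrak{f}}\cong SR\hat{\pi},
\]
and an entirely analogous argument in degree $1$ (using the version of Theorem 8.2 for $k=1$ indicated at the end of its proof) gives $\mathcal{F}[1]\cong R\mathcal{L}[1]$.

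\textbf{Step 2: Identifying the relation submodule.} Next I would trace the submodule $\mathcal{D}_{\mathfrak{t}}\subset \mathcal{F}[1]$ through the isomorphism $\mathcal{F}[1]\cong R\mathcal{L}[1]$. The generators of $\mathcal{D}_{\mathfrak{t}}$ are the differentiability relations for totally framed singular links, which carry the framing twist data; under the identification, the $q$-factors coming from parallel transport of the framing across a crossing change reproduce exactly the coefficients $q^{\pm 1}$ in the generators of $\widetilde{\mathcal{D}}$ from Section~7. Concretely, each $K_{**}\in \mathcal{L}_{\mathfrak{t}}[2]$ gives rise under $\phi$ to $K_{**}\in \mathcal{L}[2]$, and the framing transport along the two orderings of the resolutions contributes the asymmetric $q^{\pm 1}$ weights, yielding the generator $q^{-1}K_{*+}-qK_{*-}-q^{-1}K_{+*}+qK_{-*}$ of $\widetilde{\mathcal{D}}$. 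Thus $\mathcal{F}[1]/\mathcal{D}_{\mathfrak{t}}\cong R\mathcal{L}[1]/\widetilde{\mathcal{D}}$.

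\textbf{Step 3: Compatibility of the maps.} Finally I would verify that the boundary $\partial_{\mathfrak{f}}$ and the homotopy map $\mathfrak{h}_{\mathfrak{f}}$ in Theorem 8.1 correspond under these identifications to the Jones boundary $q^{-2}K_+-K_-$ on $R\mathcal{L}[1]/\widetilde{\mathcal{D}}\to R\mathcal{L}[0]$ and to the free-homotopy projection $R\mathcal{L}[0]\to SR\hat{\pi}$ respectively. This is immediate from the definitions: $\partial_{\mathfrak{f}}$ is the Vassiliev resolution weighted by the local system, which is precisely the Jones boundary of Section~7, and $\mathfrak{h}_{\mathfrak{f}}$ factors as $\mathfrak{h}$ followed by the projection to $SR\hat{\pi}/(q^{2\varepsilon(\alpha)}-1)=SR\hat{\pi}$. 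The left-hand term $H_1(\text{emb}[0];R_{\mathfrak{f}})\cong H_1(\text{emb}[0];R)$ because the local system $R_{\mathfrak{f}}$ is trivial over $\text{emb}$, and similarly $H_1(\text{imm}[0];R_{\mathfrak{f}})\cong H_1(\text{imm}[0];R)$ under our vanishing hypothesis, since the framing monodromy around any loop is detected by intersection with singular tori which are now all null-homologous rel $\partial M$.

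\textbf{Main obstacle.} The only delicate point is the last compatibility in Step~3: one has to be careful that the framing monodromy of the local system $R_{\mathfrak{f}}$ really becomes trivial under the hypothesis, and not just on individual classes. This is where the strength of the assumption (homology rather than just injectivity on $\pi_1$) is essential — it guarantees vanishing of the full cycle-level monodromy, not merely vanishing on some generating set. Once this is accepted the corollary follows by direct substitution into the exact sequence of Theorem 8.1.
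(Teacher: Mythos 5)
Your argument is correct and is essentially the paper's proof in expanded form: the paper simply notes that the hypothesis makes the framing monodromy $\delta_{\mathfrak{f}}$ trivial, so the local-system homology becomes ordinary homology with coefficients in $R$, and then Theorem 8.1 together with the identifications of Theorem 8.2 and its corollary yields the stated sequence --- exactly the mechanism you spell out via $\varepsilon(\alpha)=\varepsilon_0(\alpha)=0$ and the collapse of $\mathcal{F}[k]$ and $\mathcal{S}_{\mathfrak{f}}$. One small correction to your Step 3: $R_{\mathfrak{f}}$ restricted to $\mathrm{emb}[0]$ is \emph{not} trivial in general (the framing monodromy of a loop of embeddings can be nonzero); its triviality there is itself a consequence of the vanishing hypothesis, by the same torus-intersection argument you use for $\mathrm{imm}[0]$.
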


\begin{proof}
The assumption implies that the monodromy homomorphism 
$\delta_{\mathfrak{f}}$ is trivial and therefore the homology for the
local system is the usual homology with coefficients in $R$. 
\end{proof}

\begin{remark} The homomorphism $\mathfrak{h}_{\mathfrak{f}}$
composed with the
projection homomorphism $SR\hat{\pi }\rightarrow SR\hat{\pi }/(q^2-1)$
is the homomorphism $\widetilde{\mathfrak{h}}$ in theorem 7.3.
Note that $\mu_{\mathfrak{f}}$ is defined on
$H_1(\textrm{imm}[0];R)\cong H_1(\textrm{imm}[0])\otimes R$
while $\widetilde{\mu }$ is defined on the homology with local
coefficients defined by $\widetilde{R}$.
\end{remark}

\begin{proposition} Suppose that $M$ is apherical and atoroidal. Then
$\mu _{\mathfrak{f}}=0$ for $k=0$.
\end{proposition}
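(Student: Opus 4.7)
The plan is to adapt the proof of Theorem 6.1 to the framed local system $R_{\mathfrak{f}}$, making essential use of the defining relation $q^{-1}K^{(+)}=K$ of $\mathcal{F}$. First I fix a path component $\textrm{imm}[0]_{\alpha}$ corresponding to $\alpha\in\mathfrak{b}[0]$ and pick a base embedding $f_{\alpha}\in\textrm{emb}[0]$. Any twisted $1$-cycle in $H_1(\textrm{imm}[0];R_{\mathfrak{f}})$ supported on this component is represented by a $\mathbb{Z}[q^{\pm 1}]$-combination of loops based at $f_\alpha$ whose total local monodromy $q^{\delta'_{\mathfrak{f}}}$ is trivial, so it suffices to compute $\mu_{\mathfrak{f}}$ on such loops. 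As in Theorem 6.1, choosing nice standard links with parallel components for repeated free homotopy classes and lifting to the ordered cover $\widetilde{\textrm{map}}_a\to\textrm{map}_{\alpha}$, I reduce to loops that vary only a single component at a time.

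The adjoint of the varying component is a singular torus $T^2\to M$. The atoroidal hypothesis disposes of the essential case: after homotoping the torus into a collar $S^1\times S^1\times[0,1]$ of a torus boundary component, the loop can be freely homotoped into $\textrm{emb}[0]$ (framing transport in a collar is trivial), so the image under $\mu_{\mathfrak{f}}$ vanishes. If the torus is inessential, its image in $\pi_1(M)$ is cyclic, and, after absorbing the residual $2$-sphere summand in a collar via the asphericity hypothesis exactly as in Theorem 6.1, the loop can be pushed into a tubular neighbourhood $N\cong S^1\times D^2$ of a knot, disjoint from the fixed components.

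The decisive new step is the evaluation of $\mu_{\mathfrak{f}}$ on loops contained in $\textrm{imm}(N)$. The fibration $\Omega N\to\textrm{map}(N)\to N$ shows that $\pi_1(\textrm{imm}(N);f)$ is generated by longitudinal rotations of $f$ in the solid torus together with pure framing twists. A single longitudinal rotation crosses the discriminant once at a kink immersion $K_*\in\mathcal{K}[1]$ while simultaneously twisting the framing by $\pm 1$; its local monodromy $q^{\pm 2\pm 1}$ is nontrivial, so such a rotation is by itself not a twisted cycle. To form an honest cycle with coefficients in $R_{\mathfrak{f}}$ one must combine such rotations so that the framing twist matches the $q^{2\varepsilon}$ contribution from the crossing, and the resulting image in $\mathcal{F}[1]/\mathcal{D}_{\mathfrak{t}}$ is a $\mathbb{Z}[q^{\pm 1}]$-combination of elements of the form $q^{n}(q^{-1}K_*^{(+)}-K_*)$ with $K_*\in\mathcal{K}[1]$. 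Each such combination vanishes by $q^{-1}K^{(+)}=K$, and hence $\mu_{\mathfrak{f}}$ is zero on the chosen component of $\textrm{imm}[0]$.

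The main technical obstacle is the bookkeeping of framing transport through the three successive homotopies above (into the boundary collar in the essential case, into the tubular neighbourhood and the $2$-sphere collar in the inessential case, and finally around longitudinal rotations inside $N$), so as to ensure that the classes being computed really come from genuine twisted cycles, and that the kink contributions collapse precisely by the framing relation. Once this bookkeeping is in order the conclusion follows from the same sphere/torus alternative that drives Theorem 6.1, combined with Corollary 8.2 to pin down that no other contributions can appear.
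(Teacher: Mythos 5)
There is a genuine gap, and it sits exactly where you place your ``decisive new step''. The paper's proof begins by observing that asphericity and atoroidality force every torus map to be homologous into $\partial M$, so Corollary 8.2 applies: the monodromy $\delta_{\mathfrak{f}}$ is trivial and $H_1(\textrm{imm}[0];R_{\mathfrak{f}})\cong H_1(\textrm{imm}[0])\otimes R$. This removes at the outset the entire ``bookkeeping of framing transport through the three successive homotopies'' that you yourself identify as the main technical obstacle and then leave unresolved; in your write-up that twisted-cycle analysis is load-bearing, and it is asserted rather than carried out. Worse, your evaluation of $\mu_{\mathfrak{f}}$ on loops inside the solid torus $N$ rests on an incorrect picture: a longitudinal rotation of an embedded knot in $N\cong S^1\times D^2$ is induced by an ambient isotopy of $N$, hence is a loop in $\textrm{emb}[0]$ and crosses the discriminant nowhere --- it is not a single kink crossing accompanied by a framing twist of $\pm 1$ --- and ``pure framing twists'' are not elements of $\pi_1(\textrm{imm}(N))$ at all, since framings enter only through the fibers of the local system, not through the space $\textrm{imm}$. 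So the claimed cancellation of terms $q^{n}\bigl(q^{-1}K_*^{(+)}-K_*\bigr)$ inside $\pi_1(\textrm{imm}(N))$ does not describe anything that actually happens there.

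The correct mechanism, and the one the paper uses, distributes these ingredients differently: kink loops do appear, but globally, as generators of the kernel of $H_1(\textrm{imm}[0])\to H_1(\textrm{map}[0])$ (Lemma 6.1); by the defining relation $q^{-1}K^{(+)}=K$ of $\mathcal{F}$ their $\mu_{\mathfrak{f}}$-images vanish (the framing change along a kink isotopy cancels the crossing contribution), so $\mu_{\mathfrak{f}}$ factors through a homomorphism $H_1(\textrm{map}[0])\to \mathcal{F}[1]/\mathcal{D}_{\mathfrak{t}}\cong R\mathcal{L}[1]/\widetilde{\mathcal{D}}$, and this factored map vanishes by repeating word for word the geometric argument of Theorem 6.1 (sphere/torus alternative, pushing the single moving component into a boundary collar or a tubular neighbourhood while avoiding the fixed components, ending with a loop contained in $\textrm{emb}[0]$, which meets the discriminant not at all). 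Your outline gestures at both ingredients --- the framing relation killing kinks, and the geometric reduction --- but inverts their roles, keeps an unnecessary twisted-coefficient framework that the hypotheses trivialize, and makes the final vanishing rest on a false statement about longitudinal rotations; as written the proof does not go through.
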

 
\begin{proof} 
It follows from the assumption that in particular each torus is
homologous into the boundary. Thus Corollary 8.2 applies. 
Since $H_1(\textrm{imm}[0];R)\cong H_1(\textrm{imm}[0])\otimes R$ 
it suffices to show that $\mu_{\mathfrak{f}}$ vanishes on the image of the
natural homomorphism of abelian groups
$H_1(\textrm{imm}[0])\rightarrow H_1(\textrm{imm}[0])\otimes R$.
That $\mu_{\mathfrak{f}}$ vanishes on a kink isotopy holds
by definition of the local system.
(In fact the kink isotopy induces a change of framing cancelling
the contribution of the crossing.)
Thus we actually consider a homomorphism
$$H_1(\textrm{map}0])\rightarrow \mathcal{F}[1]/\mathcal{D}_{\mathfrak{f}}
\cong R\mathcal{L}[1]/\widetilde{\mathcal{D}}.$$
The rest of the argument follows word by word the geometric reasoning
in the proof of Theorem 6.1. 
\end{proof}

\section{Proofs of the results on skein modules}

In this section we prove Theorems 1.2, 1.3. and 1.4. concerning the skein module $\mathfrak{J}(M)$ of oriented links in $M$, considered as module over the skein algebra of the ball $\mathfrak{J}(D^3)$, which is isomorphic to $\mathfrak{R}=\mathbb{Z}[q^{\pm 1},z,h,\frac{q^{-1}-q}{h}]$ (\cite{P3} and \cite{Tu}). The framed case $\mathfrak{H}(M)$ is similar and is proved by using the results of section 8, and will not be discussed here in detail.

Recall that $\mathfrak{b}$ is the set of monomials in $\hat{\pi}$ and $\mathfrak{b}_0$ is the set of monomials 
in $\hat{\pi}_0$, where $\hat{\pi}$ and $\hat{\pi}_0$ respectively is the set of free homotopy classes and non-trivial free homotopy classes of oriented loops in $M$ respectively. Each of these sets of monomials will contain the trivial monomial $1$ corresponding to a vaccuum free homotopy class. Then for the corresponding symmetric algebras we have module equalities $S\mathcal{R}\hat{\pi}=\mathcal{R}\mathfrak{b}$ (and $S\mathfrak{R}\hat{\pi}_0=S\mathfrak{R}\hat{\pi}_0$). 
Throughout let $u\in \hat{\pi }$ denote the trivial free homotopy class. We can identify 
$\mathcal{R}\mathfrak{b}=(\mathcal{R}[u])\mathfrak{b}_0$. Note that there is the projection:
$$\mathcal{R}[u]\rightarrow \mathcal{R}[u]/(hu-(q^{-1}-q))\cong \mathfrak{R}:=\mathbb{Z}[q^{\pm 1},z,h,\frac{q^{-1}-q}{h}]\subset \mathbb{Z}[q^{\pm 1},z,h^{\pm 1}].$$
The inclusion is induced by the epimorphism of $\mathbb{Z}[q^{\pm 1},z,h]$-modules: 
$$\mathcal{R}[u]\rightarrow \mathbb{Z}[q^{\pm 1},z,h,\frac{q^{-1}-q}{h}]$$
defined by $u\mapsto \frac{q^{-1}-q}{h}$. Note that each element in $\mathcal{R}[u]$ is \textit{uniquely} represented,
modulo the ideal generated by $hu-(q^{-1}-q)$, in the form $b_0+\sum b_iu^i$ with $b_0\in \mathbb{Z}[q^{\pm 1},z,h]$ and $b_i\in \mathbb{Z}[q^{\pm 1},z]$. Note that $\mathfrak{R}$ is the Rees algebra over the ring 
$\mathbb{Z}[q^{\pm 1},z]$ for the ideal generated by $q^{-1}-q$, see definition in \cite{E}, 6.5.

The proofs will be carried out in two steps. We will first prove the corresponding results for the module $\mathcal{J}(M)$, which is defined without vacuum relations and over the ring $\mathcal{R}:=\mathbb{Z}[q^{\pm 1},z,h]$. 
We have skein relations $q^{-1}K_+-qK_-=\sigma (K_*)$ with $\sigma (K_*)=hK_0$ for a self-crossing and $\sigma (K_*)=ZK_0$ for a crossing of distinct components.

Note also that there is the ring homomomorphism defined by the composition of the inclusion $\mathfrak{R}\subset \mathbb{Z}[q^{\pm 1},z,h^{\pm 1}]$ with the homorphism into the ring $\mathbb{Z}[q^{\pm 1},h^{\pm 1}]$ mapping $z$ to $h$. It is over this ring for which standard Homflypt skein theory is usually discussed. Note that it is a priori not clear that the \textit{model} homomorphism into the corresponding skein module:
$$\mathbb{Z}[q^{\pm 1},h^{\pm 1}]\mathfrak{b}_0\rightarrow \mathcal{S}(M;\mathbb{Z}[q^{\pm 1},h^{\pm 1}])$$ 
with skein relations $q^{-1}K_+-qK_-=hK_0$ for all crossings, including the vaccuum relation. 

We will apply Theorem 1.1 with geometric models corresponding to a homomorphism
$$\mathfrak{s}: \mathcal{R}\mathfrak{b}\rightarrow \mathcal{J}(M),$$
which is injective and thereby the gives a minimal set of models in a corresponding category.
This follows by mapping $q\mapsto 1$ and $z,h\mapsto 0$.
We will then relate $\mathcal{J}(M)$ to $\mathfrak{J}(M)$. Note that there is there natural $\mathcal{R}$-homomorphism $\mathcal{J}(M)\rightarrow \mathfrak{J}(M)$ defined by the identity on $\mathcal{L}(M)$.
The main point is that this homomorphism is compatible with the $\mathcal{R}$-homomorphism $\mathcal{R}[u]\rightarrow \mathfrak{R}$.

Furthermore we would like to point out that our theorems together with Przytycki's universal coefficient theorem \cite{P2} will imply the corresponding results in usual Homflypt skein theory, in particular module isomorphism with $S\mathbb{Z}[q^{\pm 1},h^{\pm 1}]\hat{\pi}_0$.

Note that we have the free $\mathcal{R}$-module $\mathcal{R}\mathfrak{b}\cong S\mathcal{R}\hat{\pi }$, where $\mathfrak{b}$ is the set of monomials in free homotopy classes of loops $\hat{\pi }$ in the fundamental group $\pi_1(M)$ and $S$ denotes the symmetric algebra. 
   
\vskip 0.1in

\begin{proof} \textbf{[of Theorem 1.2]}. We apply Theorem 1.1 in the situation above for the boundary operator
and linear skein potential as defined above. The potential is insensitive, see e.\ g.\ \cite{K1} or \cite{Ka1}. Thus 2.12 applies. It follows from 6.5, and 7.11, which shows that \textit{kink relations} are of the form $(q^{-1}-q)K=h(\mathcal{U}\amalg K)$, where $\amalg$ is disjoint union in a $3$-ball and $\mathcal{U}$ is the unknot, that $U$ is generated by only kink relations. Now we consider the 
commutative diagram with $I$ the ideal generated by $h$:
$$
\begin{CD}
\mathcal{R}\mathfrak{b} @>>> \mathcal{J}(M) @>>> \mathcal{R}\mathfrak{b}[[I]]/U\\
@VVV @VVV @VVV \\
\mathfrak{R}\mathfrak{b}_0 @>\mathfrak{s}>> \mathfrak{J}(M) @>>> \mathfrak{R}\mathfrak{b}_0[[I]]
\end{CD}
$$
It follows from the diagram and the definitions that the composition of the bottom two maps is the inclusion
$\mathfrak{R}\mathfrak{b}_0\rightarrow \mathfrak{R}\mathfrak{b}_0[[I]].$. Now $\mathfrak{R}$ is noetherian as a subring of a noetherian ring. It follows that the inclusion of a free module into the completion is injective, see the corollary of Krull's theorem (\cite{AM}, p.\ 110). Therefore the image of $\mathfrak{s}$ in the skein module $\mathfrak{J}(M)$ is isomorphic to $\mathfrak{R}\mathfrak{b}_0$.
\end{proof}

\begin{remark} A proof of Theorem 1.2 can also be given by extending the $2$-category $\mathcal{C}_q$ from Theorem 7.1 to a category $\mathcal{C}_q^u$ so that objects are of the form $u^iq^jK$ for $i$ nonnegative integers and $j\in \mathbb{Z}$, with $K$ having no trivial components separated in a $3$-ball. 
Similarly $1$ and $2$-morphisms can be extended (see the argument below concerning the action of $i$ on 
the set of singular links).
Let $\mathcal{R}_0:=\mathbb{Z}[z,h]$ so that $\mathcal{R}=\mathcal{R}_0[q^{\pm 1}]$. Let $\mathcal{L}_0(M)$ denote the set of isotopy classes of oriented links in $M$ without any trivial components in separated $3$-balls. Note that isotopy of links in $M$ is in one-to-one correspondence with isotopy in $M\setminus D$ for an open $3$-ball $D$. Then, after 
fixing a $3$-ball near the boundary there is a well-defined action of a non-negative integer $i$ on spaces of singular links in $M\setminus D$ defined by taking a disjoint union with an $i$-component unlink in that ball. This actions descends to isotopy classes. In particular we can identify, denoting $M\setminus D$ by just $M$, $\mathcal{L}(M)=\cup_i u^i\mathcal{L}_0(M)$.
Now note that there isomorphisms of $\mathcal{R}_0$-modules $\mathcal{R}_0\textrm{ob}(\mathcal{C}_q^u)\cong \mathcal{R}[u]\mathcal{L}_0(M)\cong \mathcal{R}\mathcal{L}(M)$. Now for each $K\in \mathcal{L}(M)$ let $\chi (K)\geq 0$ be the number of components of a maximal unlink separated from the remaining component of $K$. Then the $\mathcal{R}$-module $\mathfrak{J}(M)$
is isomorphic to the quotient of the free $\mathcal{R}[u]$-module with basis $\mathcal{L}_0(M)$ by the submodule generated by $q^{-1}K_+-u^{\chi (K_-)}qK_-=\sigma (K_*)u^{\chi (K_0)}K_0$ with $K_+\in \mathcal{L}_0(M)$, a similar relation for $K_-\in \mathcal{L}_0(M)$, and $q^{-1}-q=hu$. In this way $\mathfrak{J}(M)$ is in fact an $\mathcal{R}[u]$-module. Now the mapping $\mathcal{R}[u]\rightarrow \mathfrak{R}$, $u\mapsto \frac{q^{-1}-q}{h}$ is an $\mathcal{R}[u]$-isomomorphism to the $\mathfrak{R}$-module $\mathfrak{J}(M)$. Note that each $\mathfrak{R}$-module is also a $\mathcal{R}[u]$-module using the mapping above. We can now apply Theorem 2.4 
to the category $\mathcal{C}_q^u$ and Theorem 6.1 and Proposition 7.1 as before. 
\end{remark}

\begin{proof} \textbf{[of Theorem 1.3]}. Using the homomorphism defined in Theorem 1.1 and the Proof of Theorem 1.2 above we get for each $\beta \in \mathfrak{b}_0$ the homomorphism
$$\iota_{\beta }: \mathfrak{J}(M)\rightarrow \mathfrak{R}\mathfrak{b}_0[[h]]\rightarrow \mathfrak{R}[[h]]$$
with the right hand map induced by the projection $\mathfrak{R}\mathfrak{b}_0\rightarrow \mathfrak{R}$ defined by $\beta $. 
\end{proof}

\begin{remark} Recall that there is a ring homomorphism $\mathfrak{R}\rightarrow \mathbb{Z}[q^{\pm 1},h^{\pm 1}]$, which then can be mapped in various ways to the power series ring $\mathbb{Q}[[x]]$. These ring homomorphisms extend to
$$\mathfrak{R}[[h]]\rightarrow \mathbb{Q}[[x]]$$
and we recover the power series invariants of links in $3$-manifolds as defined in \cite{KL}, \cite{Ka2}. 
\end{remark}

\begin{proof} \textbf{[of Theorem 1.4]}. Lens spaces $M=L(p,q)$ are aspherical and atoroidal for $p\neq 0$. Thus Theorems 1.2 and 1.3 apply. A careful checking of the reduction arguments in (\cite{C}, Proof of Theorem 1.1), or in \cite{GM}), shows that the homomorphism $\mathfrak{R}\mathfrak{b}_0\rightarrow \mathfrak{J}(M)$ is onto. 
\end{proof}

\begin{remark} (i) The argument in \cite{Tu}, 5.3 shows that the skein module over $\mathbb{Z}[q^{\pm 1},h^{\pm 1}]$, which is not distinguishing between self-crossings and mixed crossings, does not contain more information about links than the module $\mathcal{L}(M)$.

\noindent (ii) The study of Homflypt modules for Lens spaces $L(p,1)$ has a long history, in particular the question of the minimal generating set has been settled a long time ago, see \cite{DLP}, \cite{DL1} and \cite{DL2}. The result for $L(p,1)$ has been first proved in \cite{GM}. 

\end{remark}

\section{Relations with string topology} 

In order to relate the homomorphisms
$\mu $ respectively $\mu_0$ with string topology operations
we have to deal with both a passage from
isotopy to homotopy, and a multiplication (respectively 
transversely a smoothing)
operation. In turns out to be
interesting to describe this in the two possible ways of applying
these operations in different order. Recall that string topology operations in our 
case are defined on the chain level using bordism instead of homology, i.\ e.\ 
representing homology classes by mappings oriented manifolds for which
the usual transversality arguments hold.  

The ad hoc arguments used in the proofs of our main results 
hint at difficulties in a
passage from ordered to unordered maps.
The Chas Sullivan construction is in fact a construction in the
homology of \textit{ordered} maps of circles into $M$.

We first recall basic features of their set-up and restrict our
viewpoint at this moment to the two fundamental string topology
operations in the case of $3$-manifolds.
Let $\widetilde{\textrm{top}}(j)$ denote the space of continuous (or piecewise
smooth) mappings $\cup_jS^1\rightarrow M$. Moreover let 
$\widetilde{\textrm{top}}(j)_o$ denote the subspace of those maps with at least one
constant component.
The group $(S^1)^j$ acts on the space $\widetilde{\textrm{top}}(j)$ preserving
the subspace $\widetilde{\textrm{top}}(j)_o$. This can be used to define
equivariant homology groups $H_*^{\textrm{eq}}(\widetilde{\textrm{top}}(j))$
and relative equivariant homology groups 
$H_*^{\textrm{eq}}(\widetilde{\textrm{top}}(j),\widetilde{\textrm{top}}(j)_o)$.

In \cite{CS1} and \cite{CS2} the following two basic string operations 
$c$ and $s$ are defined, which we call the \textit{collison} and \textit{self-collision} operators. 
These are homomorphisms
$$
c: H_1^{\textrm{eq}}(\widetilde{\textrm{top}}(2))\rightarrow H_0(\textrm{top}(1))
$$
and
$$
s: H_1^{\textrm{eq}}(\textrm{top}(1))\rightarrow H_0(\widetilde{\textrm{top}}(2),\widetilde{\textrm{top}}(2)_o)
$$

\begin{remark}
In the case of rational coefficients we have the K\"{u}nneth isomorphisms
$$H_*^{\textrm{eq}}(\widetilde{\textrm{top}}(j),\widetilde{\textrm{top}}(j)_o)\cong 
\bigotimes_jH_*^{\textrm{eq}}(\widetilde{\textrm{top}}(1),\widetilde{\textrm{top}}(1)_o).$$
In \cite{CS2} the operations $c$ and $s$ are           
described geometrically 
in terms of $H_*^{\textrm{eq}}(\widetilde{\textrm{top}}(j),\widetilde{\textrm{top}}(j)_o)$. 
But Chas and Sullivan prefer to use the K\"{u}nneth
identification to express the operations in terms of
$H_*(\textrm{top}(1),\textrm{top}(1)_0)$
denoted $\mathbb{L}$ in \cite{CS2}.
We have already identified the equivariant and non-equivariant
$0$-dimensional homology groups. 
\end{remark}

To avoid cluttering of notation we will not indicate the grading in spaces of immersions in the rest of the paper so that $\textrm{imm}$ is used denote \textbf{only} $\textrm{imm}[0]$, and similarly for the ordered versions. 
It follows easily from the definitions that operations, also
denoted $s$, $c$ here, can be defined as follows
$$s: H_1^{\textrm{eq}}(\textrm{imm}(1))\rightarrow H_0(\widetilde{\textrm{imm}}(2)).$$ 
and similarly 
$$c: H_1^{\textrm{eq}}(\widetilde{\textrm{imm}}(2))\rightarrow H_0(\textrm{imm}(1))$$
Here we used that the inclusion from continuous maps into
immersions is an isomorphism in $0$ dimensional homology.

The Chas Sullivan construction allows to
introduce \textit{dummy} components, which are insensitive to
collisions or self-collisions. An example would be
$$s_i: H_1^{\textrm{eq}}(\widetilde{\textrm{imm}}(j))\rightarrow
H_0(\widetilde{\textrm{imm}}(j+1)),$$
which measures self-collisions in the $i$-th component, $1\leq i\leq j$. 
In the same way there are defined for $j\geq 2$:
$$c_{k\ell}: H_1^{\textrm{eq}}(\widetilde{\textrm{imm}}(j))\rightarrow H_0(\widetilde{\textrm{imm}}(j-1)),$$
measuring the collisions between the $k$-th and $\ell$-th component for
$1\leq k<\ell \leq j$.
By taking the sum of the $s_i$ respectively $c_{k\ell}$ we get
well-defined homomorphisms with domain and target as above.
Finally by summation over all $j$ we have defined the homomorphisms:
$$s, c: H_1^{\textrm{eq}}(\widetilde{\textrm{imm}})\rightarrow H_0(\widetilde{\textrm{imm}})$$
and thus the homomorphism $(s,t)$. This can be precomposed with
the surjective homomorphism
$$H_1(\widetilde{\textrm{imm}})\rightarrow H_1^{\textrm{eq}}(\widetilde{\textrm{imm}})$$
to define
$$\widetilde{\mathfrak{j}}: H_1(\widetilde{\textrm{imm}})\rightarrow H_0(\textrm{imm})\oplus H_0(\textrm{imm}),$$
where we have in the domain we have have composed with the
mapping induced by the projection $\widetilde{\textrm{imm}}\rightarrow \textrm{imm}$.  
(The surjectivity is proved as follows: First decompose
$$H_1^{\textrm{eq}}(\widetilde{\textrm{imm}})\cong
\bigoplus_{a\in \widetilde{\mathfrak{b}}}H_1^{eq}(\widetilde{\textrm{imm}}_a),$$
where the $\widetilde{\mathfrak{b}}$ is the set of ordered sequences
in $\hat{\pi}$. Then apply the K\"{u}nneth theorem to decompose
for $a$ of length $n$: 
$$H_1^{eq}(\widetilde{\textrm{imm}_a})\cong \bigotimes_{1\leq i\leq
n}H_1(\textrm{imm}_{a_i}/S^1).$$
Consider the Gysin epimorphisms, see \cite{K2} and \cite{CS1}
$$H_1(\textrm{imm}_{a_i})\rightarrow H_1(\textrm{imm}_{a_i}/S^1).$$
Finally apply the K\"{u}nneth isomorphism in non-equivariant
homology to deduce the result.) 

\vskip .1in 

Consider the decomposition
$$H_1(\widetilde{\textrm{imm}})\cong \bigoplus_{a\in \widetilde{\mathfrak{b}}}H_1(\textrm{imm}_a).$$
and the Hurewicz epimorphisms for $a\in \mathfrak{b}$:
$$\pi_1(\widetilde{\textrm{imm}};f_a)\rightarrow H_1(\widetilde{\textrm{imm}}_a).$$ 
The definitions of $c$ and $s$ imply that
$\widetilde{\mathfrak{j}}$ factors through the image of
$\pi_1(\widetilde{\textrm{imm}}_a)$ in $\pi_1(\textrm{imm}_{\alpha })$, where 
$\alpha \in \mathfrak{b}$ is the unordered sequence underlying $a$. 
 We know that this homomorphism
\textit{naturally} extends to a \textit{homomorphism} on 
the full groups $\pi_1(\textrm{imm}_{\alpha })$.
Because commutators map trivially it factors through the
homology groups $H_1(\textrm{imm}_{\alpha })$. Using linearity and
summarizing we thus have
defined the homomorphism:
$$\mathfrak{j}: H_1(\textrm{imm})\rightarrow H_0(\textrm{imm})\oplus H_0(\textrm{imm}).$$

We want to consider $\mathfrak{j}$ with the homomorphism $\mu $. 
We first show that $\mathfrak{j}$ results from $\mathfrak{mu }$ by  
applying a smoothing construction followed by the passage from isotopy
to homotopy.

\vskip .1in

Let
$$\mathfrak{c}: \mathbb{Z} \mathcal{L}[1]\rightarrow
\mathbb{Z}\mathcal{L}\oplus \mathbb{Z}\mathcal{L}$$
be defined by mapping basis elements $K_*$ to the \textit{Conway smoothing}
$K_0$, placed
into
the first summand for a self-crossing and into the second summand for
a crossing of dictinct components of $K_*$.
 
Recall that for $j\geq 0$ let $\mathcal{L}[k](j)$ denotes the subset of
$\mathcal{L}[k]$ given by immersions with $j$ components.
Then more precisely the homomorphism defined above is graded by 
homomorphisms:
$$\mathfrak{c}(j): \mathbb{Z} \mathcal{L}[1](j)\rightarrow \mathbb{Z}\mathcal{L}(j+1)\oplus
\mathbb{Z}\mathcal{L}(j-1)$$
for $j\geq 1$. Of course $\mathfrak{c}$ induces the homomorphism
$$\mathbb{Z}\mathcal{L}[1]/\mathcal{D}\rightarrow \mathbb{Z}\mathcal{L}\oplus
\mathbb{Z}\mathcal{L}/\mathfrak{c}(\mathcal{D}).$$ 

It follows that there is the well-defined homomorphism
$$\mathfrak{c}\circ \mu: H_1(\textrm{imm})\rightarrow 
(\mathbb{Z}\mathcal{L}\oplus
\mathbb{Z}\mathcal{L})/\mathfrak{c}(\mathcal{D}).
$$
assigning to each transversal loop in $\textrm{imm}$ the oriented sum of Conway
smoothings at singular parameters. We call this map the \textit{Lin homomorphism}.

Now note that 
$$\mathfrak{h}\oplus \mathfrak{h}: \mathbb{Z}\mathcal{L}\oplus
\mathbb{Z}\mathcal{L}\rightarrow \mathbb{Z}\mathfrak{b}\oplus
\mathbb{Z}\mathfrak{b}$$ 
maps $\sigma (\mathcal{D})$ to $0$.
Thus we have defined the homomorphism
$$(\mathfrak{h}\oplus \mathfrak{h})\circ \mathfrak{c}\circ \mu :H_1(\textrm{imm})\rightarrow \mathbb{Z}\mathfrak{b}\oplus
\mathbb{Z}\mathfrak{b}\cong
H_0(\textrm{imm})\oplus H_0(\textrm{imm}).$$

The following result is now obvious from the definitions and compares our constructions with those of Chas and Sullivan .

\begin{theorem}
$$(\mathfrak{h}\oplus \mathfrak{h})\circ \mathfrak{c}\circ \mu =
\mathfrak{j}: H_1({\rm imm}) \rightarrow H_0({\rm imm})\oplus H_0({\rm imm})$$
\end{theorem}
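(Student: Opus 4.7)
The plan is to verify the equality on a generating set of $H_1(\text{imm})$, namely classes represented by loops, and compare the two constructions pointwise at the singular parameters of a transversal representative. Both $\mu$ and the Chas--Sullivan operations are defined by transversality arguments at the codimension-$1$ locus $\text{imm}[1]\setminus \text{emb}$, so the comparison is essentially local.

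First I would fix $\alpha \in \mathfrak{b}$ and an embedded basepoint $f_\alpha \in \text{emb}_\alpha$. By the Hurewicz epimorphism $\pi_1(\text{imm}_\alpha, f_\alpha)\rightarrow H_1(\text{imm}_\alpha)$, it suffices to check the identity on homotopy classes of loops $\gamma$. I approximate $\gamma$ by a transversal loop whose singular parameter set $0<t_1<\cdots <t_n<1$ maps into $\text{imm}[1]$, with associated signs $\epsilon_i\in\{\pm 1\}$ coming from the coorientation of the discriminant. By the definition of $\mu$ in Theorem 4.1, $\mu(\gamma)=\sum_i \epsilon_i K_{*,i}\in\mathbb{Z}\mathcal{L}[1]/\mathcal{D}$, and therefore $(\mathfrak{h}\oplus\mathfrak{h})\circ\mathfrak{c}\circ\mu(\gamma)$ is the signed sum in $\mathbb{Z}\mathfrak{b}\oplus\mathbb{Z}\mathfrak{b}$ of the homotopy classes of the Conway smoothings $K_{0,i}$, placed in the first summand if the $i$-th crossing is a self-crossing and the second summand otherwise.

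Next I compute $\mathfrak{j}(\gamma)$ by unwinding the definition. Lift $\gamma$ to an ordered loop when possible; in general one first pushes into the ordered space via the compositions with reordering loops in $\text{emb}$ as in the proof of Theorem 6.1, which does not change the image under the projection-induced map because $\mathfrak{j}$ factors through the natural extension to $\pi_1(\text{imm}_\alpha)$. At each singular parameter $t_i$, a local model of the crossing identifies a neighborhood in $\text{imm}$ with the standard transverse intersection with the discriminant; via the Künneth/Gysin identifications, the contribution of such a transverse intersection to the Chas--Sullivan string bracket in $H_0(\widetilde{\text{top}})$ is exactly the resolution of the colliding or self-colliding strands at the intersection point, taken with the sign $\epsilon_i$. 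A self-crossing produces a self-collision (output in $\widetilde{\text{top}}(j+1)$, accounted for by $s$) and maps to the first summand after projection to unordered maps; a distinct-component crossing produces a collision (output in $\widetilde{\text{top}}(j-1)$, accounted for by $c$) and maps to the second summand. Taking homotopy classes identifies the contribution with the one computed above.

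The main obstacle I expect is bookkeeping: reconciling the ordered/equivariant formulation of $\mathfrak{j}$ with the unordered smoothing $\mathfrak{c}$, and verifying that the sign from the coorientation of the discriminant matches the sign produced by Chas--Sullivan's transverse intersection construction in equivariant chains. This requires (i) checking that the Gysin map used to pass from $H_1(\text{imm}_{a_i})$ to $H_1(\text{imm}_{a_i}/S^1)$ does not alter the sign assigned at a transverse double point, and (ii) verifying on the local model that the smoothing produced by $s$ respectively $c$ is the same as the Conway smoothing up to the identification of unordered components. Once these local-model identifications are established, the equality at the level of $0$-dimensional homology is immediate by summing over the singular parameters $t_i$ and then extending by linearity and naturality over the decomposition $H_1(\text{imm})\cong \bigoplus_{\alpha\in\mathfrak{b}} H_1(\text{imm}_\alpha)$.
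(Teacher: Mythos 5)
Your proposal is correct and follows essentially the same route as the paper: the paper states the identity as immediate from the definitions built up in section 10, and your argument simply makes explicit the verification this relies on, namely that on a transversal loop representative both sides record the signed homotopy classes of the Conway smoothings at the crossings, with self-crossings and distinct-component crossings sorted into the two summands via $s$ and $c$, and with the ordered/unordered passage handled by the factorization of $\mathfrak{j}$ through $\pi_1(\mathrm{imm}_\alpha)$.
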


\end{document}